\newcommand{\mc}{\mathcal}
\newcommand{\sub}{\subseteq}
\newcommand{\ol}{\overline}
\newcommand{\lra}{\Leftrightarrow}
\newcommand{\ra}{\Rightarrow}
\newcommand{\sm}{\setminus}
\DeclareMathOperator{\height}{height}
\DeclareMathOperator{\trd}{tr. d.}
\DeclareMathOperator{\vv}{\mathbf v}
\DeclareMathOperator{\Frac}{Frac}
\DeclareMathOperator{\U}{\mc U}
\newtheorem{theorem}{Theorem}[section]
\newtheorem{question}[theorem]{Question}
\newtheorem{lemma}[theorem]{Lemma}
\newtheorem{proposition}[theorem]{Proposition}
\newtheorem{corollary}[theorem]{Corollary}
\newtheorem{remark}[theorem]{Remark}
\newtheorem*{remarks*}{Remarks}
\newtheorem{example}[theorem]{Example}
\newtheorem{construction}[theorem]{Construction}
\theoremstyle{definition}
\newtheorem{notation}[theorem]{Notation}
\author{Stefania Gabelli}
\address{Dipartimento di Matematica e Fisica, Universit\`{a} degli Studi Roma
Tre,
Largo S.  L.  Murialdo,
1, 00146 Roma, Italy
}
\email{gabelli@mat.uniroma3.it}
\author{Moshe Roitman}
\address{Department of Mathematics, University of Haifa,
99 Abba Khoushy Avenue,
Mount Carme, Haifa 3498838, Israel}
\email{mroitman@math.haifa.ac.il}
\date{}
\newcommand{\comm}{,}
\newcounter{npart}
\renewcommand{\thenpart}{\Roman{npart}}
\title[On Finitely  Stable Domains\comm\ \thenpart]{}
\begin{document}
\begin{center}
{\LARGE \textbf{\color{red}{On Finitely Stable Domains}}}\\	
Stefania Gabelli and Moshe Roitman	
\end{center}	
\tableofcontents
\footnotesize{\textit{2010 AMS Mathematics subject classification.}\\
 Primary: 13A15; Secondary: 13F05, 13G05.}
\bigskip

{\it Part I of this paper corresponds to \cite{GR1}, and Part II to \cite{GR}.
\vspace*{\fill}

\newpage

\maketitle
\part{On Finitely Stable Domains, I}
\footnotesize{\textit{Keywords and phrases.} Archimedean domain, finite character, finitely stable, Mori domain, stable ideal.}

\begin{abstract}
We prove that an integral domain $R$ is stable and one-dimensional if and only if $R$ is finitely stable and Mori.
If $R$ satisfies these two equivalent conditions, then each overring of $R$ also satisfies these conditions and it is $2$-$v$-generated.
We also prove that if $R$ is an Archimedean stable domain such that $R^\prime$ is local, then $R$ is one-dimensional and so Mori.
\end{abstract}

}\section{Introduction}
In this introduction we start with a short remainder of finitely stable and stable rings, recall the definitions of other classes of rings that we use here, as  Mori, Archimedean, etc., and finally summarize our main results. By a ring we mean a commutative ring with unity. A \emph{local ring} is a ring with a unique maximal ideal, not necessarily Noetherian. A {\em semilocal ring} is a ring with just finitely many  maximal ideals.  

Motivated by earlier work of H. Bass \cite{b} and J. Lipman \cite{L} on the number of generators of an
ideal, in 1972 J. Sally and W. Vasconcelos defined an ideal $I$ of a  ring $R$ to be \emph{stable} if $I$ is projective over its endomorphism ring; they called $R$ a \emph{stable ring} if  each nonzero ideal of $R$ is stable \cite{sv1,sv2}. Stability of rings is often determined by the stability of regular ideals, that is, ideals containing a nonzero divisor.
D. Rush studied the rings such that  each finitely generated regular ideal is stable, in particular in connection with properties of their integral closure and to the 2-generator property \cite{R1, R2}. These rings are now called \emph{finitely stable}.

In a note of 1987, D.D. Anderson, J. Huckaba and
I. Papick  considered the notion of stability for
integral domains \cite{AHP}. If $I$ is a nonzero ideal of a domain $R$,
then the endomorphism ring of $I$ coincides with the overring $E(I)=(I:I)$ of $R$; also, $I$ is projective over $E(I)$ if and only if $I$ is invertible as an ideal of $E(I)$. We use here notations like $(I:I)$ in a more general context: If $R$ and $T$ are domains with the same field  of fractions $K$, $I$ is an ideal of $R$ and $S$ is a subset of $K$, we set  $(I:_TS)=\{t\in T\ |\ tS \subseteq I \}$ and $(I:S)=(I:_K S)$. The stability property of a nonzero ideal $I$ does not depend on the domain containing $I$: more precisely, if $I$ is a common nonzero ideal of two domains $A$ and $B$, then $I$ is stable as an ideal of $A$ if and only if $I$ is stable as an ideal of $B$ since $\Frac A=\Frac B$.

Since 1998,  finitely stable and stable domains have been thoroughly investigated by Bruce Olberding in a series of papers \cite{OG}-\cite{O6}.
In \cite{OG}, he also studied finitely stable rings in the spirit of Rush, extending several results known for stable domains.
Our paper heavily relies on Olberding's work. 
We thank B. Olberding for his valuable help. 
Also, as he communicated to us,  his articles \cite{O6, O1, O2} contain some errors.

Of course, when $R$ is a Noetherian  ring, stability and finite stability coincide, but in general these two classes of rings are distinct, even  if $R$ is an integrally closed domain: in this case $R$ is finitely stable if and only if it is Pr\"ufer, that is, each nonzero  finitely generated ideal of $R$ is invertible. Indeed, a domain $R$ is integrally closed if and only if $R=E(I)$ for each nonzero finitely generated ideal $I$.  However, a valuation domain is stable if and only if it is \emph{strongly discrete}, that is, each nonzero prime ideal is not idempotent \cite[Proposition 7.6]{BS2}. Thus a  valuation domain that is not strongly discrete is finitely stable, but not stable.

 A domain $R$ is finitely stable if and only if it is locally finitely stable \cite[Proposition 7.3.4]{fhp}. Actually, if $I$ is a  stable ideal of $R$, then $I_S$ is a  stable ideal of $R_S$ for each multiplicative part $S\sub R$. 
 
 Recall that a domain $R$ has \emph{finite character} if each nonzero element of $R$ is contained at most in finitely many maximal ideals. A finitely stable domain need not have finite character, since any Pr\"ufer domain is finitely stable. On the other hand,  a  domain is stable if and only if it is locally stable and has finite character \cite[Theorem 3.3]{O2}.

We denote by $R'$ the integral closure of a domain $R$.

Olberding characterized finitely stable domains as follows:

\begin{theorem}\label{OlbCharFS}\cite[Corollary 5.11]{OG} 
A domain $R$ is finitely stable if and only if it satisfies the following conditions:
\begin{enumerate} 
\item 
$R'$ is a quadratic extension of $R$;
\item
$R'$ is a Pr\"ufer domain;
\item Each maximal ideal of $R$ has  at most $2$ maximal ideals of $R'$ lying over it.
\end{enumerate}
\end{theorem}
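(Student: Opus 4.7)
The plan is to reduce to the local case and then analyze how finite stability constrains the integral closure. Since finite stability is a local property and each of the conditions (1)--(3) localizes and globalizes suitably at maximal ideals, I would first reduce to the case where $R = (R,m)$ is local; in that setting, (3) becomes the assertion that $R'$ is semilocal with at most two maximal ideals.

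For the forward implication, assume $R$ is local and finitely stable. The key tool is that for each $x \in R'$, an appropriate clearing of denominators in $R + Rx$ produces a finitely generated ideal $I$ of $R$ that is stable, hence invertible over $E(I) = (I:I)$. Using that $x$ is integral over $R$, I would identify $E(I)$ with $R[x]$ and argue that stability of $I$ forces $I$ to be $2$-generated over $E(I)$. Transferring this $2$-generation back to $R[x]$ yields a monic degree-$2$ relation for $x$ over $R$, giving (1). To obtain (2), every finitely generated ideal of $R'$ has the form $JR'$ with $J \sub R$ finitely generated; invertibility of $J$ over $E(J) \sub R'$ combined with (1) makes $JR'$ invertible in $R'$. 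Finally, (3) follows because the quadratic property forces the fiber $R' \otimes_R R/m$ to have dimension at most $2$ over $R/m$, bounding the maximal ideals of $R'$ above $m$ by two.

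For the backward implication, assume (1)--(3) in the local case. Given a finitely generated nonzero ideal $I$ of $R$, the Pr\"ufer hypothesis (2) makes $IR'$ invertible in $R'$, and the quadratic hypothesis (1) guarantees that $E(I)$ sits inside $R'$ as a $2$-generated $R$-submodule. The condition (3) then provides the semilocal control needed to descend the invertibility of $IR'$ in $R'$ to invertibility of $I$ over $E(I)$: one produces a two-element generating set for $I$ over $E(I)$ by exploiting the at-most-two maximal ideals of $R'$ lying over $m$, so that $I$ is stable.

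The main obstacle I anticipate is the dictionary between the element-wise statement ``each element of $R'$ is quadratic over $R$'' and the module-theoretic fact that each intermediate ring $R \sub S \sub R'$ is $2$-generated over $R$; this equivalence relies critically on (3) to rule out pathological fiber behavior. Making this precise would lean on Olberding's prior structural results relating stability of an ideal $I$ to the $2$-generator property of $I$ over $E(I)$, which would supply the bridge between these two formulations.
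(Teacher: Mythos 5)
First, note that the paper does not prove this statement at all: it is quoted verbatim as \cite[Corollary 5.11]{OG} and used as a black box, so there is no internal proof to compare yours against. Judged on its own, your sketch has the right global shape (localize, then pass back and forth between $R$ and $R'$), but several of its load-bearing steps are either wrong or conceal the actual difficulty.

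In the forward direction, the identification of $E(I)=(I:I)$ with $R[x]$ is false in general: for $x=a/b$ and $I=(a,b)$ one has $xb=a\in I$ but $xa=a^2/b$ need not lie in $I$, so $x$ need not even belong to $(I:I)$. Moreover, ``quadratic extension'' is the two-variable condition $xy\in xR+yR+R$ for all $x,y\in R'$, not merely the existence of a monic degree-$2$ relation for each single element; your argument only addresses the latter (the case $x=y$), and the standard derivation instead applies stability to a three-generated ideal built from a common denominator of $x$ and $y$. The deduction of (3) from (1) is also broken: the fiber $R'\otimes_R R/m$ need not be finite-dimensional over $R/m$ (since $R'$ need not be a finite $R$-module), and ``every element satisfies a monic quadratic'' does not bound the number of maximal ideals --- over $\mathbb{F}_2$ every element of an arbitrary product of copies of $\mathbb{F}_2$ is idempotent. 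Bounding the fibers genuinely uses that $R'$ is Pr\"ufer and integral together with the quadratic property, via a separate argument.

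In the backward direction you conflate ``stable'' with ``$2$-generated over $E(I)$'': stability means $I$ is invertible in $(I:I)$, and in the local case the point is that $IR'$ is \emph{principal} (invertible in a semilocal Pr\"ufer ring), after which one must descend principality to $(I:I)$. That descent is precisely \cite[Proposition 3.6]{OG} (quoted in this paper as Lemma \ref{stableRR}), and it is the real content of the converse; your proposal gestures at ``exploiting the at-most-two maximal ideals'' without supplying the argument. So the proposal identifies the correct ingredients but does not constitute a proof, and at least two of its intermediate claims (the identification $E(I)=R[x]$ and the fiber-dimension bound) are false as stated.
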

 	
Recall that a domain $D$ is a {\em quadratic extension} of a domain $R$ if for each $x,y\in D$ we have $xy \in xR+yR+R$.   
Olberding  also proved that, in the local one-dimensional case, stability and finite stability are equivalent provided the maximal ideal is stable:

\begin{proposition}  \label{Olb1dims} \cite[Theorem 4.2]{O8} Let $R$ be a local one-dimensional domain. The following conditions are equivalent:
\begin{enumerate}
\item[(i)] $R$ is stable;
\item[(ii)] $R$ is finitely stable  with stable maximal ideal;
\item[(iii)] $R'$ is a quadratic extension of $R$ and $R'$ is a Dedekind domain with at most two maximal ideals.
\end{enumerate}
\end{proposition}

Stability is related to divisoriality and to the $2$-generator property.
Recall that an ideal $I$ of a domain $R$ is \emph{divisorial} if $I\ne(0)$ and $I=I_v=(R~:~(R:~I))$. A domain $R$ is called \emph{divisorial} if each nonzero ideal of $R$ is divisorial, and it is called \emph{totally divisorial} if each overring of $R$ is divisorial.
 An ideal $I$ of $R$ is called \emph{$2$-generated} if $I$ can be generated by  two elements. The domain $R$  is {\em $2$-generated} if each finitely generated ideal of $R$ is $2$-generated.

A  domain $R$ is stable and divisorial if and only if it is totally divisorial \cite[Theorem 3.12]{O5}. Also,
any stable Noetherian domain is one-dimensional \cite[Proposition 2.1]{sv2}, and a Noetherian domain is stable and divisorial (i.e., totally divisorial) if and only if it is $2$-generated (\cite[Theorem 3.1]{O6} and \cite[Theorem 7.3]{BS2}). The $2$-generator property for Noetherian domains is strictly stronger than stability. The first example of a stable Noetherian domain that is not $2$-generated (equivalently, it is not divisorial) was given in \cite[Example 5.4]{sv2}. Several other examples can be found in \cite[Section 3]{O5}.

A \emph{Mori domain} is a domain with the ascending chain condition on divisorial ideals. This is equivalent to the property that each nonzero ideal $I$ of $R$ contains a  finitely generated nonzero ideal $J$ such that $(R:I)=(R:J)$, that is, $I_v=J_v$ \cite[Theorem 2.1]{Bar}.
Clearly Noetherian domains are Mori. For the main properties of Mori domains, see the survey  \cite{Bar} and the references there.
A nonzero ideal $I$ of an integral domain $R$ is {\em $2$-$v$-generated} if $I$ contains a $2$-generated  ideal $J$ such that
$(R:I)=(R:J)$, and  $R$ is  {\em $2$-$v$-generated} if  each nonzero ideal of $R$ is $2$-$v$-generated. Of course, a  $2$-$v$-generated domain is Mori.  However, if each divisorial ideal of $R$ is principal (hence $2$-$v$-generated), then $R$ is not necessarily Mori (see \cite[page 561]{MZ}).
Clearly, a Mori $2$-generated domain is $2$-$v$-generated.

A Mori domain $R$ satisfies the ascending chain condition on principal ideals (for short, accp), and so it is  \emph{Archi\-me\-dean}, that is,    $\bigcap_{n\geq 0}r^nR=(0)$, for each nonunit $r\in R$. Indeed, a domain $R$ satisfies accp if and only if $\bigcap_{n\ge1}(\prod_{i=1}^n r_iR) =(0)$ for any nonunits $r_i\in R$, equivalently $\bigcap_{n\ge1}a_nR=(0)$ if the sequence of principal ideals $a_nR$  is strictly decreasing.
Besides accp domains, the class of  Archimedean domains includes also one-dimensional domains \cite[Corollary 1.4]{Ohm} and completely integrally closed domains
\cite[Corollary 13.4]{gilmer}. We recall that a domain $R$ is completely integrally closed if and only if $R=E(I)$ for each nonzero ideal $I$. Hence completely integrally closed domains are integrally closed and the converse holds in the Noetherian case. A completely integrally closed stable domain is Dedekind.

Here are our main results:
{\em
\begin{enumerate}
\item If $R$ is an Archimedean  stable domain such that $R^\prime$ is local, then $R$ is one-dimensional (Corollary \ref{stable1dim}).
\item A  domain $R$ is stable and one-dimensional if and only if it is finitely stable and  Mori (Theorem \ref{MoriFS}).
If $R$ satisfies these two equivalent conditions, then each overring of $R$ also satisfies these conditions and it is $2$-$v$-generated.
\end{enumerate}
}

If $R^\prime$ is not local, an Archimedean local stable domain $R$ need not be one-dimensional. Indeed, we present  examples of Archimedean local stable domains of dimension $n$, for each $n\geq 1$, in Part 2, section \ref{examples}. 

A class of one-dimensional local domains that are stable and not Noetherian was  constructed by Olberding in  \cite[Theorems 4.1 and 4.4]{O7} (see also \cite[Theorem 3.10]{O5}). By our results, all these domains are new examples of one-dimensional Mori domains.

We thank T. Dumitrescu for pointing out some errors in  previous versions of this paper.


\section{The one-dimensional case}
In the following, $R$ is an integral domain that is not a field. By an {\it ideal} we mean an integral ideal. 

The following construction, due to Olberding, is basic for our paper.

\begin{construction}\label{olbcons1}\cite[Section 4]{O2}
	\rm Let $(R,M)$ be a local  domain.
	Set $R_i=\{0\}$ for $i<0$, $R_0=R$ and $M_0=M$. Define inductively for $n > 0$:
	$R_n= R_{n-1}$ if $R_{n-1}$ is not local, and $R_n=E(M_{n-1})=(M_{n-1}:M_{n-1})$ if
	$R_{n-1}$ is local with maximal ideal denoted by $M_{n-1}$.
	Set $T=\bigcup_{n\geq 0}R_n$.
	
	Thus we have:
	\begin{enumerate}
		\item[(a)]
		If there exists an integer $k>0$ such that
		$R_k$ is not local, but $R_i$ is local for $0\le i<k$, then $R_n=R_k$ for all $n\ge k$, and $T=R_k$.
		\item[(b)]
		If $R_n\subsetneq R_{n+1}$ for all $n\ge0$, all the rings $R_n$ are local.
	\end{enumerate}
\end{construction}
	
We will use  repeatedly the following  theorem of Olberding.

\begin{theorem} \cite[Corollary 4.3, Theorem 4.8]{O2} {\rm and its proof, and} \cite[Theorem 5.4]{OG} \label{Olb} Let $R$ be a finitely stable local domain with stable maximal ideal $M$.  With the notation of Construction \ref{olbcons1} we have:

\begin{enumerate}
		\item[(1)] Each $R_n$ is finitely stable with stable maximal ideals, and there exists an element $m\in M$ such that $M=mR_1$. Moreover, for $k\geq 1$, if $R_k$ is local with maximal ideal $M_k$, then  $M_k=mR_{k+1}=MR_{k+1}$, and if $T$ is local, then its maximal ideal is $mT=MT$.   

\item[(2)] Each $R_n$ is a finitely generated $R$-module, thus $T$ is an integral extension of $R$. 
\end{enumerate}

We also have:
	
\begin{enumerate}
\item[(a)]  If  $T=R_n$ for some $n\geq 0$, then $T$ is a finitely generated $R$-module, and $T$ has at most two maximal ideals.
		
\item[(b)] If $T\neq R_n$ for all $n\geq 0$,  then  $T$ is local.

\item[(c)] The maximal ideals of $T$ are principal, and the Jacobson radical of $T$ is equal to
$mT=MT$, where $mR_1=M$.
\end{enumerate}
In addition,
 if $R$ is  a  stable domain, then $T$ is equal to the integral closure $R^\prime$ of $R$, and $R'$ is a strongly discrete Pr\"ufer domain.
 	\end{theorem}

 In the one-dimensional case we have:

\begin{corollary} \label{DVR} Let $R$ be a one-dimensional finitely stable local domain with stable maximal ideal $M$. Then $R$ is stable, and in the setting of Theorem \ref{Olb}, $T=R^\prime$ is a principal ideal domain with at most two maximal ideals. 
Hence, if $T$ is local, in particular, if $T\neq R_n$ for each $n\geq 0$, then $T$ is a DVR.
\end{corollary}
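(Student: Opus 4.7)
The plan is to derive this as an immediate consequence of Proposition \ref{Olb1dims} combined with Theorem \ref{Olb}, with a standard observation about semilocal Dedekind domains to conclude the PID statement.

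First, I would note that the equivalence (i)$\lra$(ii) in Proposition \ref{Olb1dims} is exactly what is needed to upgrade finite stability to stability: since $R$ is local, one-dimensional, finitely stable, and has stable maximal ideal, it follows directly that $R$ is stable. This handles the first assertion with no work.

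Next, since $R$ is now a stable local domain with stable maximal ideal, the final statement of Theorem \ref{Olb} applies, giving $T=R'$ and telling us that $R'$ is a strongly discrete Pr\"ufer domain. On the other hand, the equivalence (i)$\lra$(iii) of Proposition \ref{Olb1dims} says that $R'$ is a Dedekind domain with at most two maximal ideals. A Dedekind domain with only finitely many maximal ideals is a PID (a standard consequence of unique factorization of ideals together with the Chinese remainder theorem, which yields principal generators for the maximal ideals), so $T = R'$ is a PID with at most two maximal ideals, as claimed.

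For the last sentence, if $T$ is local then $T$ is a local PID, hence a DVR. In particular, under condition (b) of Theorem \ref{Olb} ($T \ne R_n$ for all $n$), Theorem \ref{Olb}(b) gives that $T$ is local, so $T$ is a DVR. There is essentially no obstacle here; the result is a clean packaging of the cited material, and the only non-tautological ingredient is the elementary fact that a semilocal Dedekind domain is principal.
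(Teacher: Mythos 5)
Your proof is correct, and its overall structure matches the paper's: first upgrade finite stability to stability via the equivalence (i)$\Leftrightarrow$(ii) of Proposition \ref{Olb1dims}, then conclude $T=R'$ from the final statement of Theorem \ref{Olb}, and finally show $R'$ is a PID. The only point where you diverge is the last step. The paper obtains the PID conclusion from Theorem \ref{Olb}(c) --- the maximal ideals of $T$ are principal --- combined with one-dimensionality of $R'$ (which follows since $R'$ is integral over $R$) and \cite[Corollary 37.9]{gilmer}; you instead invoke the equivalence (i)$\Leftrightarrow$(iii) of Proposition \ref{Olb1dims} to see that $R'$ is a Dedekind domain with at most two maximal ideals, and then use the standard fact that a semilocal Dedekind domain is principal. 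Both routes are valid and equally short; yours has the mild advantage of not needing part (c) of Theorem \ref{Olb} at all, while the paper's is the one that generalizes to the situations elsewhere in the paper (e.g.\ Corollary \ref{archPID}) where only ``one-dimensional with principal maximal ideals'' is available rather than the full Dedekind property.
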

\begin{proof} $R$ is stable by Proposition \ref{Olb1dims}, so $T=R'$.  Since $R'$ is one-dimensional with principal maximal ideals, $R'$ is a principal ideal domain by \cite[Corollary 37.9]{gilmer}.
\end{proof}

\begin{proposition} \label{charab} In the setting of Theorem \ref{Olb},
 	$T\neq R_n$ for each $n\geq 0$ if and only if $T$ is a finite $R$-extension (that is, $T$ is a finitely generated $R$-module).
 	Hence $T=R_n$ for some $n\geq 0$ if and only if $T$ is not  a finite $R$-extension.
   	(Recall that if $R$ is stable, then $T=R'$.)
\end{proposition}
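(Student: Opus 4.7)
The plan is to observe first that conditions (a) and (b) of Theorem \ref{Olb} are mutually exclusive and exhaustive: either there exists some $n$ with $T = R_n$, or no such $n$ exists. Consequently, the second assertion of the proposition (concerning (b)) follows formally from the first (concerning (a)). So I only need to establish the biconditional: (a) holds if and only if $T$ is a finitely generated $R$-module.

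For the forward direction, the work has essentially been done already. If $T = R_n$ for some $n \geq 0$, then Theorem \ref{Olb}(a) states directly that $T$ is a finitely generated $R$-module. So this implication is immediate from the results already quoted.

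For the reverse direction, suppose $T$ is a finitely generated $R$-module, say $T = Rt_1 + \cdots + Rt_k$ with each $t_i \in T$. Because $T = \bigcup_{n \geq 0} R_n$ and the $R_n$ form an ascending chain, each $t_i$ lies in some $R_{n_i}$; setting $n = \max_i n_i$ gives $t_1, \dots, t_k \in R_n$. Therefore $T \subseteq R_n$, and combined with the obvious inclusion $R_n \subseteq T$ we conclude $T = R_n$, which is exactly condition (a).

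There is essentially no obstacle here; the argument is a standard directed-union argument combined with the finite-module statement from Theorem \ref{Olb}(a). The only point worth checking carefully is that the sequence $R_0 \subseteq R_1 \subseteq R_2 \subseteq \cdots$ really is an ascending chain whose union is $T$, but this is immediate from Construction \ref{olbcons}. Thus the proof should be only a few lines.
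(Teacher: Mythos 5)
Your proof is correct and follows essentially the same route as the paper: the forward direction quotes Theorem \ref{Olb}(a) directly, and the converse uses the standard directed-union argument that a finite generating set must land in some $R_n$, forcing $T=R_n$. Nothing further is needed.
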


\begin{proof}
If $T=R_n$ for some $n\geq 0$, then $T$ is a finitely generated  $R$-module by Theorem \ref{Olb} (a).
Conversely, assume that $T$ is generated as an $R$-module by a finite subset $F$ of $T$. Then there exists an integer $n\ge0$ such that $F\sub R_n$, implying that $T=R_n$. 
\end{proof}

Denote by $\U(A)$ the set of units of a domain $A$.

\begin{remark}
In the setting of Theorem \ref{Olb}, for any integer $n\geq 0$  we have $\U(T)\cap R_n=\U(R_n)$, 
since $T$ is an integral extension of $R_n$.
\end{remark}

\begin{lemma}\label{tmn}
	Let $R$ be a finitely stable local domain with stable maximal ideal. In the setting of Theorem \ref{Olb}, if $T$ is local, in particular, if condition (b) holds, we have: 
\begin{enumerate} 
\item 
For each $n\geq 0$, 
	$(R:_T m^n)=(R:_T M^n)=R_n$;   equivalently, $Tm^n\cap R=R_nm^n$   (here $M^0=R$).  
\item 
Let $r=um^n$ be a nonzero element of $R$, where $u\in\U(T)$, and $n\ge0$. Then $(R:_T r)=R_n$.
\end{enumerate}
\end{lemma}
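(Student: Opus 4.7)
The plan is to reduce part (1) by induction on $n$ to the key identity $(R_n:_T m) = R_{n+1}$, and then derive part (2) as a short consequence.

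For the base case $n=0$, both $(R:_T m^0)$ and $(R:_T R)$ equal $R=R_0$. For the inductive step I note that since $M=mR_1$ one has $R_1 M = M$, hence $R_1 M^n = M^n$ for all $n\ge 1$, so $M^{n+1}=mM^n$. Then for $t\in T$, the condition $tM^{n+1}\sub R$ is equivalent to $tm\cdot M^n\sub R$, i.e., $tm\in(R:_T M^n)=R_n$ by the inductive hypothesis, giving $(R:_T M^{n+1})=(R_n:_T m)$. The analogous manipulation on $m^{n+1}=m\cdot m^n$ yields $(R:_T m^{n+1})=(R_n:_T m)$. The induction thus reduces to proving $(R_n:_T m) = R_{n+1}$.

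To prove this identity I use that, since $T$ is local, every $R_k$ is local (if some $R_k$ were non-local, the construction would freeze at a non-local ring and $T=R_k$ would be non-local), and $R_{n+1}=(M_n:M_n)$. For the inclusion $R_{n+1}\sub(R_n:_T m)$: if $t\in R_{n+1}$, then $tM_n\sub M_n$, and since $m\in M\sub M_n=MR_{n+1}$ by Theorem \ref{Olb}(1), we have $tm\in M_n\sub R_n$. For the reverse inclusion, let $t\in T$ with $tm\in R_n$; we may assume $tm\neq 0$. Since $tm\in mT$, which is the Jacobson radical of $T$, $tm$ is not a unit in $T$, hence not a unit in $R_n$ (units of $R_n$ remain units in $T$). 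Thus $tm\in M_n=mR_{n+1}$, so $tm=ms$ with $s\in R_{n+1}$, and cancelling $m$ gives $t=s\in R_{n+1}$.

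For part (2), writing $r=um^n\in R$, part (1) gives $u\in(R:_T m^n)=R_n$. Moreover $u\in\U(R_n)$: otherwise $u\in M_n=mR_{n+1}\sub mT$, placing $u$ in the Jacobson radical of $T$ and contradicting $u\in\U(T)$. For $t\in T$ we then have $tr\in R$ if and only if $tum^n\in R$, if and only if $tu\in R_n$ by part (1), which is equivalent to $t\in u^{-1}R_n=R_n$, so $(R:_T r)=R_n$. The most delicate point throughout is the Jacobson-radical argument, used twice to rule out unwanted units, together with the factorisation $M^{n+1}=mM^n$ that drives the induction.
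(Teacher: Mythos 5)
Your proof is correct and rests on the same key ingredients as the paper's: the factorization $M^{n+1}=mM^n$ coming from $M=mR_1$, the fact that $mT$ is the Jacobson radical of $T$ (so elements of $R_n\cap mT$ land in $M_n$), and $\U(T)\cap R_n=\U(R_n)$. The only difference is organizational --- the paper runs the induction by replacing $R$ with $R_1$ and computing $(R:_T m^n)=(M:_T m^n)=(R_1m:_T m^n)=(R_1:_T m^{n-1})$, whereas you keep $R$ fixed and isolate the one-step identity $(R_n:_T m)=R_{n+1}$; these are equivalent reductions.
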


\begin{proof}
(1) 
We prove the equality $(R:_T m^n)=R_n$ by induction on $n$ starting with $n=0$. Let $n>0$.
Since $M=R_1m$, by applying the induction assumption to  $R_1$  replacing $R$ we obtain  that:
$$(R:_T m^n)=(M:_T m^n)=(R_1m:_T m^n)=(R_1:_T m^{n-1})=R_n.$$
Also $M^n=(R_1m)^n=R_1m^n$. Since $R_n=(R:_T m^n)$ and $R_1 \subseteq R_n$, we obtain 
$$
R_n\subseteq  (R:_T R_1m^n)=(R:_T M^n)\subseteq (R:_T m^n)=R_n, 
$$
so  $(R:_T M^n)=R_n$.
 
(2)
By item (1) we have $u\in R_n$,  and also:
$$
(R:_T r)=(R:_T um^n)=((R:_T m^n):u)=(R_n:_K u)=R_n,
$$
where $K=\Frac R$, since $u\in \U(R_n)$.
\end{proof}

\begin{lemma}\label{Tint} 
	Let $(R,M)$ be a finitely stable local domain with stable maximal ideal. In the notation of \ref{olbcons} assume that $T$ is local. Then
	$$
	\Bigl(\bigcap_{n\ge0} m^nT\Bigr)^2\sub\bigcap_{n\ge0} m^nR.
	$$
\end{lemma}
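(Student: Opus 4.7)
The approach is to use Lemma \ref{tmn} as a two-way dictionary between ``$m^n$-divisibility in $T$ landing in $R$'' and ``membership in $R_n$'': for $t\in T$, $m^nt\in R$ iff $t\in R_n$. I apply this in both directions, first to extract compatibility information from $x,y\in I$, and then to repackage it as $m^n$-divisibility in $R$.

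Setting $I:=\bigcap_{n\ge0}m^nT$, take $x,y\in I$. Because $T=\bigcup_n R_n$ is a directed union, each of $x,y$ lies in some $R_n$, so the integers $\delta_x:=\min\{k:x\in R_k\}$ and $\delta_y:=\min\{k:y\in R_k\}$ are finite. Now fix $n\ge0$; the crucial choice is $a:=n+\delta_y$ and $b:=n+\delta_x$. Since $x\in I\sub m^aT$ and $y\in I\sub m^bT$, write $x=m^a\alpha$ and $y=m^b\beta$ with $\alpha,\beta\in T$.

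The main computation runs as follows. From $m^{a+\delta_x}\alpha=m^{\delta_x}x$ and Lemma \ref{tmn} applied to $x\in R_{\delta_x}$, one has $m^{a+\delta_x}\alpha\in R$, whence $\alpha\in R_{a+\delta_x}$ by Lemma \ref{tmn} again; symmetrically $\beta\in R_{b+\delta_y}$. The whole point of choosing $a,b$ as above is that $a+\delta_x=b+\delta_y=a+b-n=n+\delta_x+\delta_y$, so $\alpha$ and $\beta$ both lie in the common overring $R_{a+b-n}$, and hence $\alpha\beta\in R_{a+b-n}$. A final application of Lemma \ref{tmn} gives $m^{a+b-n}\alpha\beta\in R$; but $m^{a+b-n}\alpha\beta=xy/m^n$ in the fraction field, so $xy\in m^nR$. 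Since $n$ was arbitrary, $xy\in\bigcap_n m^nR$, proving the lemma.

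The main subtlety is the choice of exponents $a$ and $b$. The naive $a=b=n$ forces $\alpha\in R_{n+\delta_x}$ and $\beta\in R_{n+\delta_y}$, which in general lie in different overrings and so do not give a single $R_k$ containing $\alpha\beta$ from which Lemma \ref{tmn} can recover $xy/m^n\in R$. Offsetting each exponent by the \emph{other} element's $\delta$ equalizes both indices to $a+b-n$ and is the only nontrivial ingredient in the argument.
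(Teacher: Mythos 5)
Your proof is correct and rests on the same engine as the paper's: Lemma \ref{tmn}(1) together with the containment $m^kR_k\sub R$ and the fact that the $R_k$ form an ascending chain of rings. The paper organizes the computation slightly differently---it first multiplies both factors by a common power $m^e$ to land in $R\cap\bigcap_{n}m^nT$, where $(R\cap m^nT)^2=(m^nR_n)^2\sub m^nR$, and divides by $m^{2e}$ at the end---but your cross-chosen exponents $a=n+\delta_y$, $b=n+\delta_x$ achieve exactly the same reduction to a single $R_k$, so the two arguments are essentially identical.
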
	  

\begin{proof}
	By Lemma \ref{tmn} (1), we have for all $n\ge0$:
	$$\Bigl(R\cap\bigcap_{k\ge0} m^kT\Bigr)^2\subseteq  (R\cap m^nT)^2=(m^n R_n)^2=m^n(m^n R_n )\subseteq  m^nR,$$ 
	so 	$\Bigl(R\cap\bigcap_{k\ge0} m^kT\Bigr)^2\subseteq \bigcap_{n\ge0} m^nR$. 
	
Now let $s,t\in\bigcap_{n\ge0} m^nT$.  Again by Lemma  \ref{tmn} (1), we have $sm^e,tm^e\in R$ for a sufficiently large integer $e$. Thus 
$$(sm^e)(tm^e) \in  \Bigl(R\cap\bigcap_{n\ge0} m^nT\Bigr)^2\subseteq \bigcap_{n\ge0} m^nR.$$
It follows that $st=\frac{(sm^e)(tm^e)}{m^{2e}}\in \bigcap_{n\ge0} m^nR$. Hence
$\Bigl(\bigcap_{n\ge0} m^nT\Bigr)^2\sub\bigcap_{n\ge0} m^nR$.
\end{proof}

\begin{theorem}\label{Tlocal} 
	Let $R$ be an Archimedean finitely stable local domain with stable maximal ideal. In the notation of \ref{olbcons}, if $T$ is local, in particular, if condition (b) of Theorem \ref{Olb} holds, then  $R$ is one-dimensional.	 
\end{theorem}

\begin{proof} 
	By Theorem \ref{Olb}, the maximal ideal of $T$ is $mT$, $m\in M$. Let $Q=\bigcap_{n\ge0} m^nT$. By Lemma \ref{Tint}, $Q^2\sub \bigcap_{n\ge0} m^nR=(0).$ Hence  $Q=(0)$.
	By \cite[Theorem (7.6) (a) and (c)]{gilmer}, $Q$ is the largest non-maximal prime contained in $mT$. Thus $T$ is one-dimensional, and so is $R$, as $T$ is an integral extension of $R$.
\end{proof}

\begin{corollary}\label{stable1dim}
	Let $R$ be an Archimedean stable  domain satisfying one of the following two conditions:
	\begin{enumerate} 
\item[\rm{(a)}] 
$R$ is local and $R^\prime$ is not a finitely generated $R$-module; 
 
\item[\rm{(b)}]
$R'$ is local.
\end{enumerate}
Then $R$ is one-dimensional.
\end{corollary}

\begin{proof} 
(a)  Since $R$ is local and stable, we have $T=R'$.  By Proposition \ref{charab}, condition (a) here is equivalent to condition (b) of Theorem \ref{Olb}. Thus $T=R^\prime$ is local and so $R$ is one-dimensional by Theorem  \ref{Tlocal}. 

(b) Since $R^\prime$ is local, also $R$ is local. As in (a), since $R$ is also stable, we have $T=R^\prime$, and again $R$ is one-dimensional by Theorem  \ref{Tlocal}. 	
\end{proof}

As already mentioned, if $R^\prime$ is not local, an Archimedean local stable domain $R$ need not be one-dimensional. Indeed, we present  examples of Archimedean local stable domains of dimension $n$, for each $n\geq 1$, in Part 2, section \ref{examples} (that is, \cite[Section 5]{GR}).

\section{The $2$-$v$-generator property}

In the Noetherian case, the next theorem was proved by Sally and Vasconcelos \cite[Theorem 2.4]{sv1}. Olberding proved that the hypotheses of the theorem imply Noetherianity.

\begin{theorem} \label{a2gen} \cite[Proposition 4.5]{O1}
	Let $R$ be a one-dimensional stable domain.
	If $R'$ is a finite $R$-extension, then each ideal of $R$ is $2$-generated.
\end{theorem}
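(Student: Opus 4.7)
The plan is to force $R$ to be Noetherian and then invoke the classical result of Sally and Vasconcelos \cite[Theorem 2.4]{sv1} cited just before the statement.

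First I would show that $R'$ is a Dedekind domain. Since $R$ is one-dimensional and $R'$ is integral over $R$, $R'$ is one-dimensional; by Theorem \ref{OlbCharFS}, $R'$ is Pr\"ufer. For each maximal ideal $N$ of $R'$, setting $M = N\cap R$, the local ring $R_M$ is a one-dimensional stable domain and $(R_M)' = (R')_M$ is finite over $R_M$, so Corollary \ref{DVR} gives that $(R_M)'$ is a principal ideal domain with at most two maximal ideals; hence the further localization $R'_N$ is a DVR. Next, $R'$ has finite character: given $0 \ne x \in R'$, picking an integral equation $x^n + c_{n-1} x^{n-1} + \cdots + c_0 = 0$ with $c_i \in R$ and $c_0 \ne 0$, one has $c_0 \in xR' \subseteq N$ for every maximal $N$ of $R'$ containing $x$; by finite character of $R$ (valid since $R$ is stable) only finitely many maximals of $R$ contain $c_0$, and by Theorem \ref{OlbCharFS}(3) each carries at most two maximals of $R'$. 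Thus $R'$ is a one-dimensional Pr\"ufer domain, locally a DVR, with finite character — that is, $R'$ is Dedekind and in particular Noetherian.

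With $R'$ Noetherian and finite as an $R$-module, the Eakin--Nagata theorem yields that $R$ is Noetherian. Then $R$ is a one-dimensional Noetherian stable domain with $R'$ a finite $R$-module, so Sally--Vasconcelos \cite[Theorem 2.4]{sv1} concludes that every ideal of $R$ is $2$-generated.

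The principal obstacle lies in the first step, particularly the transfer of finite character from $R$ to $R'$ and the identification of $R'_N$ as a DVR for each maximal $N$; both rely on carefully combining Olberding's characterization of finitely stable domains (Theorem \ref{OlbCharFS}) with the local structure from Corollary \ref{DVR}. The rest of the argument is a direct appeal to Eakin--Nagata and then to the classical Noetherian case.
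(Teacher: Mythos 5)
Your argument is correct and follows exactly the route the paper indicates: the paper gives no proof of its own, citing \cite[Proposition 4.5]{O1} and remarking that Olberding's contribution was precisely to show the hypotheses force Noetherianity, after which the Sally--Vasconcelos result \cite[Theorem 2.4]{sv1} applies. Your reconstruction of that reduction (localize to get $R'$ locally a DVR via Corollary \ref{DVR}, transfer finite character using an integral equation with nonzero constant term together with Theorem \ref{OlbCharFS}(3), conclude $R'$ is Dedekind, then Eakin--Nagata) is sound.
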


In Proposition \ref{1dim2v} below,  we state that a  stable one-dimensional domain $R$ is \emph{$2$-$v$-generated}, that is, for each nonzero ideal $I$ there are two elements $x, y\in I$ such that $I_v= \langle x, y \rangle_v$; thus $R$ is Mori. We start with the following notation:

\begin{notation}\label{v}
	In the setting of Theorem \ref{Olb},  assume that the domain $R$ is one-dimensional and that $T$ is local (in particular, $T$ is local if condition (b) holds). As $T$ is a DVR (Corollary \ref{DVR}) with maximal ideal $mT$, we denote by $\vv$  the discrete valuation of $T$ such that $\vv(m)=1$.
\end{notation}

\begin{lemma}\label{2v}
In the setting of Theorem \ref{Olb},  assume that the domain $R$ is one-dimensional and that $T$ is local. Then, by using Notation \ref{v}, we have:
	
	\begin{enumerate}
		\item
		Let $r$ be a nonzero element of $R$. Then:
		$$(R:_T r)=R_{\vv(r)}.$$
		
		\item
		Let $I$ be a nonzero ideal of $R$, and let $a$ be an element of minimal value $\vv(a)=k$ in $I$.
		Then:
		$$(R:_T I)=R_k.$$
	\end{enumerate}
\end{lemma}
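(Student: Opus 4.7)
\medskip

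The plan is to derive both parts directly from Lemma \ref{tmn}, which already carries the bulk of the work.

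For part (1), since $r \in R \subseteq T$ and $T$ is a DVR with uniformizer $m$, I can write $r = u m^{\vv(r)}$ for a unique $u \in \U(T)$. This is exactly the hypothesis of Lemma \ref{tmn}(2), so $(R:_T r) = R_{\vv(r)}$, which is what we want.

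For part (2), one inclusion is free: since $a \in I$, we have $(R:_T I) \subseteq (R:_T a) = R_{\vv(a)} = R_k$ by part (1). For the reverse inclusion $R_k \subseteq (R:_T I)$, I need to show that $R_k \cdot I \subseteq R$. Pick any $b \in I$ and set $\ell = \vv(b)$; by the choice of $k$ we have $\ell \geq k$. Write $b = u' m^{\ell}$ with $u' \in \U(T)$.

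The key observation is that $u' \in R_\ell$: by Lemma \ref{tmn}(1), $R_\ell = (R :_T m^\ell)$, and since $u' m^\ell = b \in R$, this forces $u' \in R_\ell$. Now the chain $R_k \subseteq R_{k+1} \subseteq \cdots \subseteq R_\ell$ gives that any $t \in R_k$ lies in $R_\ell$, hence $tu' \in R_\ell$ (since $R_\ell$ is a ring), and therefore $(tu')m^\ell \in R_\ell \cdot m^\ell \subseteq R$ again by Lemma \ref{tmn}(1). That is, $tb \in R$ for every $t \in R_k$ and every $b \in I$, giving $R_k \subseteq (R:_T I)$.

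There is no real obstacle here; the substantive content is already packaged in Lemma \ref{tmn}. The only point that requires a moment of care is recognizing that the unit $u'$ extracted from $b = u' m^\ell$, which a priori lives only in $T$, must in fact lie in the smaller ring $R_\ell$ because $b \in R$.
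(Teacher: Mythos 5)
Your proof is correct and follows essentially the same route as the paper: part (1) is a direct application of Lemma \ref{tmn}(2), and your elementwise verification that $R_k\cdot I\subseteq R$ is just an unfolding of the paper's one-line computation $(R:_T I)=\bigcap_{b\in I\setminus\{0\}}(R:_T b)=\bigcap_{b}R_{\vv(b)}=R_k$, using the same monotonicity of the chain $R_k\subseteq R_{k+1}\subseteq\cdots$ and the same facts from Lemma \ref{tmn}.
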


\begin{proof}
(1)  This follows from Lemma \ref{tmn} (2).
 
(2)
By item (1), we have
$$(R:_T I)=\bigcap_{r\in I\setminus \{0\}} (R:_T r)=\bigcap_{r\in I\setminus \{0\}} R_{\vv(r)}=R_k.$$
\end{proof}

From Lemma \ref{tmn} (1) we obtain:

\begin{lemma}\label{vRk}
In the setting of Theorem \ref{Olb},  assume that the domain $R$ is one-dimensional and that $T$ is local. Then, in the notation \ref{v}, we have
 for all $k\ge0$:  
$$\{r\in R\, | \, \vv(r)\ge k\}= R\cap m^kT= R_k m^k.$$	 
\end{lemma}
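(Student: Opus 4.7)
The plan is to establish the claimed chain of equalities by unwinding definitions and invoking the preceding lemma. There is essentially no substantive mathematical content beyond what has already been proved; the statement consolidates earlier facts into a uniform description of $R$-elements by their $\vv$-value, which is convenient to cite later.

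First, I would verify the equality
$\{r\in R\mid \vv(r)\ge k\} = R\cap m^kT$
directly from the definition of $\vv$. Under the hypotheses, Corollary \ref{DVR} tells us that $T$ is a DVR, and by Notation \ref{v}, $\vv$ is the normalized valuation on $T$ with $\vv(m)=1$. Hence for any $t\in T$, the condition $\vv(t)\ge k$ is equivalent to $t\in m^kT$. Intersecting with $R$ yields the first equality at once.

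For the second equality, $R\cap m^kT = R_k m^k$, I would simply invoke Lemma \ref{tmn}(1), which asserts (in its equivalent reformulation) that $Tm^k\cap R = R_k m^k$. Since $R\cap m^kT$ is literally the same set, this closes the chain.

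The only point worth a cautionary remark is the case $k=0$, where the statement reads $R = R\cap T = R_0 m^0 = R\cdot 1$, which is trivially true and is consistent with the convention $M^0 = R$ used in Lemma \ref{tmn}(1). No genuine obstacle arises; the work is entirely contained in Lemma \ref{tmn}(1) and the DVR structure of $T$.
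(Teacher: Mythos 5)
Your proof is correct and follows the same route as the paper, which derives the lemma directly from Lemma \ref{tmn}(1); the first equality you spell out (that $\vv(t)\ge k$ for $t\in T$ means $t\in m^kT$, by the DVR structure of $T$) is exactly the step the paper leaves implicit. No issues.
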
	  

\begin{proposition} \label{1dimloc2v} A one-dimensional  local stable domain $R$ is  $2$-$v$-generated; hence $R$ is a Mori domain.
\end{proposition}

\begin{proof} 	In case (a) of Theorem \ref{Olb},  every ideal of $R$ is $2$-generated by Theorem \ref{a2gen}.
	
	Assume condition  (b) of Theorem \ref{Olb}, and use Notation \ref{v}. Let $I\ne R$ be a nonzero ideal of $R$.
	Since $T$ is a DVR, there exists a nonzero element $t\in T$ of maximal value $\vv(t)$ such that $\frac 1 t I\sub R$. Let $J=\frac 1 t I$, so  $(R:J)\subseteq T$. Since $\frac 1 m\notin T$,  there exists a nonzero element $a_1\in J$ such that $\frac {a_1}m\notin R$. Let $a_2$ be an element of minimal value $k$ in $J$. If $\frac {a_2}m\notin R$, set $a=a_2$. Assume that $\frac{a_2}m\in R$. If $\vv(a_1)=\vv(a_2)$, set $a=a_1$. Otherwise $\vv(a_1)>\vv(a_2)$, so $\vv(a_1+a_2)=\vv(a_2)$ and $\frac {a_1+a_2} m\notin R$. In this case we set $a=a_1+a_2$.
	In each case, $a$ is a nonzero element of minimal value $k$ in $J$ such that $\frac a m\notin R$. Thus $a=um^k$, where $u\in\U(R_k)\sm R_{k-1}$, by Lemma \ref{tmn} (1).
	
	Since $(R:J)\sub T$ and $\frac {1}{um}\notin T$, there exists an element $b\in J$ such that $\frac b{um}\notin R$.  We show that \mbox{$(R:\{ a,b\})\sub T$}.
	
	 If $x$ is an element in $(R:\{a,b\})\sm T$, we have $x=\frac 1{vm^i}$, where $v\in\U(T)$ and $i>0$. Thus $\frac 1{vm}a, \frac 1{vm}b\in R$.
	Since $\frac 1{vm} a=\frac uv m^{k-1}\in R$, we have $\frac uv\in \U(R_{k-1})$ by Lemma \ref{tmn} (1). Since $\vv(b)\ge k$, we have   $\vv(\frac{b}{vm})\ge k-1$. As $\frac{b}{vm}\in R$, we obtain  by Lemma \ref{vRk} that
	$\frac{b}{vm}\in R_{k-1}m^{k-1}$. Hence, $\frac b{um}=\frac vu\frac b{vm} \in R_{k-1}m^{k-1}\subseteq R$, a contradiction. It follows that $(R:\{ a,b\})\sub T$.
	
	Since $a$ is of minimal value in $J$, by Lemma \ref{2v} (1)-(2), we have \mbox{$(R:_T J)=(R:_T a)$}.
	
	Hence $(R:J)\sub (R:\{ a,b\})=(R:_T\{ a,b\})=(R:_T J) \subseteq (R:J)$,	
	so $(R:J)=(R:\{ a,b\})$. Thus  $J$ is $2$-$v$-generated and so is $I=tJ$. We conclude that $R$ is $2$-$v$-generated.
\end{proof}

\begin{corollary} Let $R$ be an Archimedean stable domain such that $R^\prime$ is local (in particular, assume that condition (b) of Theorem \ref{Olb} holds). Then $R$ is a one-dimensional  Mori domain.
\end{corollary}
\begin{proof} $R$ is local and one-dimensional by Corollary \ref{stable1dim}. Hence $R$ is Mori by Proposition \ref{1dimloc2v}.
\end{proof}

In Proposition \ref{1dim2v} below we globalize Proposition \ref{1dimloc2v}.

\begin{lemma}\label{2vs}
	Let $S$ be a multiplicative subset of an integral domain $R$. If $I$ is a $2$-$v$-generated nonzero ideal of $R$, then the ideal $IR_S$ of $R_S$ is $2$-$v$-generated. Hence, if $R$ is $2$-$v$-generated,  also $R_S$ is $2$-$v$-generated.
\end{lemma}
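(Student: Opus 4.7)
The plan is to take the given $2$-$v$-generating pair $x,y\in I$ and show that (viewed via the canonical map) the same pair $2$-$v$-generates $IR_S$ in $R_S$.  By hypothesis there exist $x,y\in I$ with
$(R:I)=(R:\langle x,y\rangle)$; set $J=\langle x,y\rangle\sub I$.  I would show
$$(R_S:IR_S)=(R_S:JR_S).$$

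The inclusion $(R_S:IR_S)\sub(R_S:JR_S)$ is immediate from $JR_S\sub IR_S$. For the reverse inclusion, take $z\in K=\Frac R=\Frac R_S$ with $z\cdot JR_S\sub R_S$. Then $zx,zy\in R_S$, so there exists $s\in S$ with $szx,szy\in R$; an $R$-linear combination argument gives $sz\cdot J\sub R$, i.e. $sz\in(R:J)$.  Invoking the hypothesis $(R:J)=(R:I)$, we get $sz\cdot I\sub R$, whence $z\cdot I\sub\tfrac{1}{s}R\sub R_S$ and therefore $z\cdot IR_S\sub R_S$.  This is the only real content of the lemma; once one writes out what $(R_S:JR_S)$ means and clears denominators from $S$, the argument is straightforward because $J$ is finitely generated.

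Thus $IR_S$ is $2$-$v$-generated in $R_S$ by the images of $x$ and $y$.  For the final sentence, recall that every nonzero ideal $\mathcal I$ of $R_S$ is extended from $R$ (namely $\mathcal I=(\mathcal I\cap R)R_S$), so if every nonzero ideal of $R$ is $2$-$v$-generated, then by the first part so is every nonzero ideal of $R_S$.

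I do not expect a serious obstacle here: the only point requiring care is that one cannot in general commute the colon $(R:-)$ with localization on an arbitrary (non-finitely-generated) ideal, which is why the proof uses the equality $(R:I)=(R:J)$ to replace $I$ by the finitely generated $J$ before clearing denominators.
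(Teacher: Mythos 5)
Your proof is correct and follows essentially the same route as the paper: both reduce to the $2$-generated subideal $J\subseteq I$ with $(R:J)=(R:I)$ and exploit that colons with finitely generated ideals commute with localization; you merely unpack the identity $(R:J)R_S=(R_S:JR_S)$ by clearing denominators, where the paper cites it directly.
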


\begin{proof}
	There exists a $2$-generated subideal $J$ of $I$ such that \mbox{$(R:J)=(R:I)$}. Since $(R:J)R_S=(R_S:JR_S)$, we have 
	\mbox{$(R_S:I R_S)=(R_S:JR_S)$} and so the ideal $IR_S$ of $R_S$ is $2$-$v$-generated.
\end{proof}

\begin{lemma}\label{assoc}
	Let $(R,M)$ be a  local one-dimensional domain,  and let $a,b\in M$ be two nonzero elements.
		Then  each element in $a+Rb^k$ is associated with $a$ for all sufficiently large integers $k$.
	\end{lemma}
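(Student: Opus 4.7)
The plan is to show that for all sufficiently large $k$, one has $(a+rb^k)R = aR$ for every $r\in R$, which is what being associated with $a$ means in a domain. Since $R$ is local, this will follow if we can factor $a+rb^k = a(1+rc_k)$ with $c_k\in M$, because then $1+rc_k\notin M$, hence $1+rc_k$ is a unit. So the whole task reduces to producing, for $k$ large, an element $c_k\in M$ with $b^k=ac_k$, i.e.\ showing $b^k\in aM$ for all $k\gg0$.

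The key step uses one-dimensionality. Since $R$ is one-dimensional local with maximal ideal $M$ and $a\in M$ is nonzero, $M$ is the only prime of $R$ containing $aR$, so $\sqrt{aR}=M$. In particular $b\in\sqrt{aR}$, so there is an integer $k_0\ge1$ with $b^{k_0}\in aR$; write $b^{k_0}=ac$ for some $c\in R$. For any $k\ge k_0+1$, I would then write
\[
b^k \;=\; b^{k-k_0}\cdot b^{k_0}\;=\; a\cdot\bigl(b^{k-k_0}c\bigr),
\]
and set $c_k:=b^{k-k_0}c$. Since $b\in M$ and $k-k_0\ge1$, the element $c_k$ lies in $M$, which is exactly the membership $b^k\in aM$ sought.

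Finishing is then a one-line computation: for any $r\in R$ and any $k\ge k_0+1$,
\[
a+rb^k \;=\; a + r\,a\,c_k \;=\; a(1+rc_k),
\]
and because $rc_k\in M$ while $R$ is local, $1+rc_k$ is a unit of $R$. Consequently $(a+rb^k)R=aR$, so $a+rb^k$ is associated with $a$, uniformly in $r\in R$.

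There is no real obstacle here; the only substantive ingredient is $\sqrt{aR}=M$, which encodes the one-dimensional hypothesis. Everything else is formal manipulation in a local ring. The role of the second factor of $b$ (going from $b^{k_0}\in aR$ to $b^k\in aM$ for $k>k_0$) is what upgrades $1+rc$ from a possibly arbitrary element to something in $1+M$, and hence a unit.
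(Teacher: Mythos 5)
Your proof is correct and follows essentially the same route as the paper: both arguments reduce the claim to showing $b^k\in aM$ for all large $k$ (the paper gets this from $M=\sqrt{aM}$, you from $\sqrt{aR}=M$ plus one extra factor of $b$), and then conclude via the factorization $a+rb^k=a(1+rb^k/a)$ with $1+rb^k/a$ a unit of the local ring.
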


\begin{proof}
		Since $R$ is local and one-dimensional, we have $M=\sqrt {aM}$, so $b^k\in aM$ for each sufficiently large integer $k$. Hence
		for all $r\in R$ we have $a+rb^k=a(1+ r(\frac {b^k}a))$, where $1+r\frac {b^k}a$ is a unit in $R$ since $\frac {b^k}a \in M$.
\end{proof}

\begin{proposition}\label{2vg}
	Let $R$ be a one-dimensional domain of finite character. The following conditions are equivalent:
	\begin{enumerate}
 \item[(i)]  $R$ is $2$-$v$-generated;
 \item[(ii)] $R$ is
	locally $2$-$v$-generated. 
	\end{enumerate}\end{proposition}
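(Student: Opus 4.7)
My plan is to prove the two implications separately. Direction $(i) \Rightarrow (ii)$ is immediate from Lemma \ref{2vs}: being $2$-$v$-generated is preserved by localization, so if every nonzero ideal of $R$ is $2$-$v$-generated, so is every ideal of $R_M$ for each maximal $M$.

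For $(ii) \Rightarrow (i)$, let $I$ be a nonzero ideal of $R$. By finite character, any nonzero element of $I$ lies in only finitely many maximal ideals, so $I$ itself is contained in at most finitely many maximal ideals $M_1, \ldots, M_n$. By hypothesis each $IR_{M_j}$ is $2$-$v$-generated, and after clearing denominators we may take $a_j, b_j \in I$ with $(R_{M_j} : IR_{M_j}) = (R_{M_j} : \{a_j, b_j\}R_{M_j})$. The plan is to patch these local pairs into a single global pair $(a, b) \in I \times I$ with $(R : I) = (R : \{a, b\})$; since $(R : I) = \bigcap_M (R_M : IR_M)$ and similarly for $(R : \{a, b\})$, it suffices to arrange the local equality $(R_M : IR_M) = (R_M : \{a, b\}R_M)$ at every maximal $M$.

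For the patching I would apply the Chinese Remainder Theorem to the pairwise coprime ideals $M_1^k, \ldots, M_n^k$ with $k$ chosen sufficiently large: pick $e_j \in R$ with $e_j \equiv 1 \pmod{M_j^k}$ and $e_j \in M_{j'}^k$ for $j' \neq j$, and set $a = \sum_j e_j a_j$, $b = \sum_j e_j b_j$; both are in $I$. At each $M_j$ the difference $a - a_j$ lies in $M_j^k R$, which for $k$ large is absorbed into $a_j M_j R_{M_j}$ since $R_{M_j}$ is one-dimensional local with $M_j = \sqrt{a_j R_{M_j}}$ (this is precisely the mechanism behind Lemma \ref{assoc}). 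Writing $a = a_j(1 + \rho)$ with $\rho \in M_j R_{M_j}$ exhibits $a$ and $a_j$ as associates in $R_{M_j}$, so $aR_{M_j} = a_jR_{M_j}$; likewise $bR_{M_j} = b_jR_{M_j}$. Therefore $(a, b)R_{M_j} = (a_j, b_j)R_{M_j}$, and the $v$-closures agree with $(IR_{M_j})_v$.

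At maximal ideals $M \notin \{M_1, \ldots, M_n\}$ one has $IR_M = R_M$, so one needs $(a, b)R_M = R_M$, i.e., at least one of $a, b$ must be a unit at $M$. By finite character this can fail at only finitely many ``bad'' primes $L$ where $a, b \in L$ but $L \not\supseteq I$. Since $I + L = R$ and $M_j^k + L = R$ for each $j$, the ideal $I \cap \bigcap_j M_j^k$ is coprime to every such $L$, so prime avoidance produces $\delta \in I \cap \bigcap_j M_j^k$ lying outside every bad $L$; replacing $a$ by $a + \delta$ preserves the associate relations $aR_{M_j} = a_jR_{M_j}$ while making $a$ a unit at each bad $L$. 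Then $(R_M : \{a, b\}R_M) = (R_M : IR_M)$ at every maximal $M$, and intersecting gives $(R : \{a, b\}) = (R : I)$. The main obstacle is coordinating the CRT bookkeeping to simultaneously ensure $a, b \in I$, the local associate relations at each $M_j$ via Lemma \ref{assoc}, and unit-ness at the finitely many bad primes; finite character is used essentially to keep every relevant set of primes finite.
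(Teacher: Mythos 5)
Your overall strategy --- localize, produce local generating pairs $\{a_j,b_j\}$, patch them with the Chinese Remainder Theorem, and use finite character to keep every relevant set of primes finite --- is the same as the paper's, but there are two genuine gaps in the execution. The first is the absorption step: you reduce modulo $M_j^k$ and claim that $M_j^kR_{M_j}\subseteq a_jM_jR_{M_j}$ for $k$ large because $M_jR_{M_j}=\sqrt{a_jR_{M_j}}$. That inference is only valid when $M_jR_{M_j}$ is finitely generated, and the proposition concerns arbitrary one-dimensional domains of finite character. For example, in a rank-one valuation domain whose value group is $\mathbb{Q}$ one has $\mathfrak m^k=\mathfrak m$ for every $k$, which is contained in $a\mathfrak m$ for no nonzero nonunit $a$. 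Lemma \ref{assoc} controls $a+Rb^k$ for a \emph{single} element $b$ (the radical condition $\sqrt{a\mathfrak m}=\mathfrak m$ forces $b^k\in a\mathfrak m$ for some $k$ depending on $b$, but says nothing about powers of the whole maximal ideal). This is precisely why the paper fixes pairwise comaximal elements $m_i\in M_i$ and imposes congruences modulo $Im_i^k$ rather than modulo $M_i^k$; your argument needs the same substitution.

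The second gap is in the treatment of the maximal ideals not containing $I$. You correct $a$ to $a+\delta$ so that it leaves the finitely many primes $L$ containing both $a$ and $b$ but not $I$. But the set of maximal ideals containing $a+\delta$ differs from the set containing $a$: a maximal ideal $M$ with $b\in M$, $a\notin M$, $I\not\subseteq M$ may perfectly well contain $a+\delta$, and then $\{a+\delta,b\}\subseteq M$ destroys the required local equality at $M$. To repair this you must arrange $a+\delta\notin L$ for \emph{every} maximal $L$ containing $b$ but not $I$ (avoiding one residue class modulo each such $L$, which the CRT permits), or, as the paper does, fix $a$ first and only then choose $b$ so as to avoid the finitely many maximal ideals that contain $a$ but not $I$. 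Both defects are repairable, but as written the proof does not go through.
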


\begin{proof}
(i) $\ra$ (ii) If $R$ is $2$-$v$-generated,  then $R$ is
	locally $2$-$v$-generated by  Lemma \ref{2vs}.

(ii) $\ra$ (i) Assume that $R$ is locally $2$-$v$-generated. We prove that each  nonzero ideal $I\ne R$ of $R$ is $2$-$v$-generated.
Since $R$ has finite character,
there are just finitely many maximal ideals containing $I$, say
$M_1,\dots, M_e$, which we assume to be distinct.  For each $1\le i\le e$,  the domain $R_{M_i}$ is  $2$-$v$-generated, so
there exist  nonzero elements $a_i,b_i$ in $I$ such that $(R_{M_i}:~I)=(R_{M_i}:\{ a_i,b_i\})$.
There exist pairwise comaximal elements $m_i\in M_i$, for $1\le i\le e$. By the Chinese Remainder Theorem, for each positive integer $k$ there exists an element $a \in I$ such that  we have in $R$:
$$
a\equiv a_i\mod I m_i^k
$$
for $1\le i\le e$.
By Lemma \ref{assoc}, we may choose  $k$  sufficiently large such that for each $i$ the elements $a$ and $a_i$ are associated in $R_{M_i}$, so $(R_{M_i}:I)=(R_{M_i}:~\{ a_i,b_i\})=(R_{M_i}:\{ a,b_i\})$.

Let $N_q\ (q=1,2,\dots, f)$ be the maximal ideals containing $a$ but not $I$. There exist pairwise comaximal elements $n_i\in M_i$, for $1\le i\le e$ that belong to no maximal ideal $N_q$. Also  there exists an element $c\in I$ that belongs to no  ideal $N_q$. By the Chinese Remainder theorem, for each positive integer $j$ there exists an element $b\in I$ such that $b\equiv b_i \mod In_i^j$ for each $1\le i\le e$,  and $b\equiv c \mod IN_q$ for each ideal $N_q$.  Hence $b\notin N_q$ for all $1\le q\le f$.
By Lemma \ref{assoc}, for a sufficiently large  integer $j$ and for each $i$, the elements $b$ and $b_i$ are associated in $R_{M_i}$, so $(R_{M_i}:I)=(R_{M_i}:\{ a,b\})$ for all $1\le i\le e$.

Let $M$ be a maximal ideal of $R$. If $M$ contains $I$, thus $M=M_i$ for some integer $1\le i\le e$, then
$(R_{M_i}:I)=(R_{M_i}:\{ a, b\}).$
If $M$ contains $a$ but not $I$, then $M=N_q$ for some integer $1\le q\le f$, so $b\notin M$. Thus
$(R_M:~I)=R_M=(R_M:\{ a, b\})$. If $M$ does not contain $a$, then again $(R_M:I)=R_M=(R_M:\{ a, b\})$.
Thus for each maximal ideal $M$ of $R$ we have $(R_M:I)=(R_M:\{ a, b\})$. Hence
$$
(R:I)=\bigcap_M (R_M:I)=\bigcap_M (R_M:\{a,b\})=(R:\{a,b\}),
$$
where $M$ runs over all the maximal ideals of $R$. We conclude that  $I$ is $2$-$v$-generated, so the domain $R$ is $2$-$v$-generated.	 
\end{proof}	  

\begin{corollary}\label{stab2v}
	A  one-dimensional stable domain is $2$-$v$-generated if and only if it is locally  $2$-$v$-generated	 
\end{corollary}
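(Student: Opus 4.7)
The plan is to deduce this directly from Proposition \ref{2vg} by verifying its hypotheses for a one-dimensional stable domain $R$. The key point recorded in the introduction is that a stable domain is precisely a locally stable domain of finite character (\cite[Theorem 3.3]{O2}); in particular $R$ has finite character and is one-dimensional, which is exactly the setting of Proposition \ref{2vg}. Hence Proposition \ref{2vg} applies and gives the equivalence between $R$ being $2$-$v$-generated and $R$ being locally $2$-$v$-generated.

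More concretely, for the forward direction one just invokes Lemma \ref{2vs}, which has no stability assumption. For the reverse direction, since $R$ is one-dimensional and has finite character by the stability hypothesis, Proposition \ref{2vg} yields that local $2$-$v$-generation passes to $R$. No further calculation is required.

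The only step that needs a brief comment is that a one-dimensional stable domain is automatically locally $2$-$v$-generated: each localization $R_M$ at a maximal ideal $M$ is again one-dimensional and stable (stability localizes, as noted in the introduction), hence $2$-$v$-generated by Proposition \ref{1dimloc2v}. This makes the equivalence of the corollary hold vacuously on both sides, but the content of the statement lies in the fact that the global-from-local implication requires the finite character provided by stability, in contrast with the almost Dedekind counterexample discussed just before the corollary.

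There is no genuine obstacle here; the corollary is essentially a specialization of Proposition \ref{2vg} obtained by observing that the finite character hypothesis is automatic for stable domains. The proof is therefore short and consists mainly of citing the two propositions and the finite character property of stable domains.
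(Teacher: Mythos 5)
Your proposal is correct and follows exactly the paper's route: the paper's entire proof is the observation that a stable domain has finite character, after which Proposition \ref{2vg} gives the equivalence. Your additional remark that both sides of the equivalence actually always hold (via Proposition \ref{1dimloc2v} and the localization of stability) is a fair aside but not needed for the argument.
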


\begin{proof} 
	Indeed, a stable  domain has finite character.	 
\end{proof}	

A locally $2$-$v$-generated domain $R$ is not necessarily $2$-$v$-generated even if $R$ is one-dimensional. For example, if $R$ is an almost Dedekind domain that is not Dedekind, then
$R$ is locally a DVR, but $R$ is not Mori since an almost Dedekind and Mori domain  is Dedekind. For a positive result, see  Proposition \ref{onedim2v} below.

\begin{lemma} \label{1morifc}
A one-dimensional Mori domain  has finite character.
\end{lemma}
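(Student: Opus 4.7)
The plan is to show that each nonzero nonunit $r\in R$ is contained in only finitely many maximal ideals of $R$. Since $R$ is one-dimensional, a maximal ideal $M$ contains $r$ if and only if $M$ is minimal over the principal ideal $rR$. So the task reduces to showing that $rR$ has only finitely many minimal primes. Note that $rR$, being principal, is automatically divisorial; hence the problem really is about minimal primes of a divisorial ideal in a Mori domain.

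Next I would invoke two standard ingredients from the theory of Mori domains, both of which are collected in Barucci's survey \cite{Bar}: (i) in a Mori domain, every nonzero ideal has only finitely many minimal primes, and (ii) minimal primes of a divisorial ideal are themselves divisorial. The underlying reason is that the ACC on divisorial ideals allows one to write the radical of any divisorial ideal as a finite intersection of prime divisorial ideals (the ``radical'' version of Mori's theorem), and these primes are precisely the minimal primes; infiniteness would contradict the ACC. Applying (i) to $I=rR$ gives exactly what we want.

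Combining the two steps: $rR$ has only finitely many minimal primes, and in a one-dimensional domain these coincide with the maximal ideals containing $r$. Since $r$ was an arbitrary nonzero nonunit, $R$ has finite character.

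The main obstacle is not really the one-dimensional reduction (that part is immediate) but the background fact that nonzero ideals in a Mori domain have only finitely many minimal primes; that is where the Mori hypothesis is essential and where one must appeal to the ACC on divisorial ideals rather than to Noetherianity.
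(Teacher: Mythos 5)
Your argument is correct and rests on the same underlying facts as the paper's, but it is packaged differently. The paper quotes two results from Barucci's survey directly: in a one-dimensional Mori domain every maximal ideal is divisorial, and a Mori domain is a finite-character intersection of its localizations at maximal divisorial ideals; finite character of $R$ falls out immediately. You instead reduce to counting the primes minimal over $rR$ and invoke the finiteness of minimal primes of a (divisorial) ideal in a Mori domain. Both routes ultimately come down to the same fact, namely that a nonzero element of a Mori domain lies in only finitely many maximal divisorial ($t$-maximal) ideals, combined with the observation that in dimension one the maximal ideals containing $r$ are minimal over the divisorial ideal $rR$ and hence are themselves (maximal) divisorial. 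One caution: your ingredient (i), stated for \emph{every} nonzero ideal of a Mori domain, is stronger than what the standard literature provides and is not obviously true for non-divisorial ideals (the ACC argument you sketch via radicals is delicate, since the radical of a divisorial ideal in a Mori domain need not be divisorial). You should restrict the claim to divisorial ideals --- or simply to principal ideals, which is all you use, since $rR$ is automatically divisorial as you note. With that restriction the proof is sound.
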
 

\begin{proof} If $R$ is Mori and one-dimensional, every maximal ideal of $R$ is divisorial \cite[Theorem 3.1]{Bar}. By \cite[Theorem 3.3 (c)]{Bar}, a Mori domain is an intersection of finite character of the localizations at its maximal divisorial ideals. It follows that $R$ has finite character.
\end{proof}

\begin{proposition}\label{onedim2v}
	Let  $R$ be a one-dimensional domain. The following conditions are equivalent:
	\begin{enumerate}
		\item[(i)] $R$ is $2$-$v$-generated;
		\item[(ii)] $R$ is locally $2$-$v$-generated and $R$ has finite character.
	\end{enumerate}
\end{proposition}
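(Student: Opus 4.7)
The plan is to show that this proposition is essentially a corollary of Proposition \ref{2vg} combined with Lemma \ref{1morifc}. The implication (ii) $\Rightarrow$ (i) is nothing more than an application of Proposition \ref{2vg}, since that proposition already gives the equivalence of $2$-$v$-generated and locally $2$-$v$-generated under the additional standing hypothesis of finite character (which is part of (ii)) and one-dimensionality.

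For (i) $\Rightarrow$ (ii), I would argue in two parts. First, localization is easy: Lemma \ref{2vs} directly says that if $R$ is $2$-$v$-generated, then $R_M$ is $2$-$v$-generated for every maximal ideal $M$, giving ``locally $2$-$v$-generated.'' Second, to get finite character I would use that a $2$-$v$-generated domain is Mori (noted explicitly in the introduction: a $2$-generated subideal $J \subseteq I$ with $(R:I) = (R:J)$ is a fortiori finitely generated, so $R$ satisfies the characterization of Mori domains via $I_v = J_v$ for some finitely generated $J \subseteq I$). Being a one-dimensional Mori domain, $R$ has finite character by Lemma \ref{1morifc}.

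No step here is a real obstacle, since all the work was done in the preceding lemmas and in Proposition \ref{2vg}; the present proposition is just the packaging of those results into a clean equivalence. The only thing to be careful about is to invoke the correct results in the right order and to note explicitly that $2$-$v$-generated implies Mori, since that is what bridges (i) with the finite character hypothesis appearing in (ii) and in Proposition \ref{2vg}.

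\begin{proof}[Proof sketch]
(i) $\Rightarrow$ (ii): If $R$ is $2$-$v$-generated, then $R_M$ is $2$-$v$-generated for each maximal ideal $M$ by Lemma \ref{2vs}. Moreover, since every nonzero ideal of $R$ contains a $2$-generated (hence finitely generated) subideal with the same $v$-closure, $R$ is a Mori domain. As $R$ is one-dimensional, Lemma \ref{1morifc} implies that $R$ has finite character.

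(ii) $\Rightarrow$ (i): This is exactly Proposition \ref{2vg}.
\end{proof}
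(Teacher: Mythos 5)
Your proof is correct and follows exactly the paper's own argument: Lemma \ref{2vs} for the local part, the observation that a $2$-$v$-generated domain is Mori (stated in the introduction) combined with Lemma \ref{1morifc} for finite character, and Proposition \ref{2vg} for the converse. The only difference is that you spell out the Mori step explicitly, which the paper leaves implicit in its citation of Lemma \ref{1morifc}.
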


\begin{proof}
	(i) $\ra$ (ii) $R$ is locally $2$-$v$-generated by Lemma \ref{2vs} and has finite character by Lemma \ref{1morifc}.
	
	(ii) $\ra$ (i)
	See Proposition \ref{2vg}.
	\end{proof}

\begin{proposition} \label{1dim2v} A one-dimensional   stable domain $R$ is  $2$-$v$-generated; hence $R$ is Mori.
\end{proposition}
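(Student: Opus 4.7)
The plan is to combine the local result of Proposition \ref{1dimloc2v} with the globalization provided by Corollary \ref{stab2v}. Since stability localizes, for every maximal ideal $M$ of $R$ the localization $R_M$ is a one-dimensional stable local domain, so by Proposition \ref{1dimloc2v} each $R_M$ is $2$-$v$-generated. Hence $R$ is locally $2$-$v$-generated.

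Next, I would invoke the finite character property of stable domains, recalled in the introduction (a stable domain is locally stable and has finite character). This places $R$ in the hypothesis of Corollary \ref{stab2v} (equivalently, of Proposition \ref{onedim2v}), which states that a one-dimensional stable domain is $2$-$v$-generated if and only if it is locally $2$-$v$-generated. Applying this to $R$ yields that $R$ itself is $2$-$v$-generated.

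Finally, since every $2$-$v$-generated domain is Mori (as noted in the introduction, each nonzero ideal $I$ contains a finitely generated ideal $J$ with $I_v = J_v$), we conclude that $R$ is Mori. There is no real obstacle here: the proposition is a direct synthesis of the local statement (Proposition \ref{1dimloc2v}) and the passage from local to global (Corollary \ref{stab2v}), both of which have already been established, with the finite character of stable domains serving as the bridge between them.
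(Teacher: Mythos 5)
Your proposal is correct and follows exactly the paper's own argument: localize (stability localizes, so Proposition \ref{1dimloc2v} gives that $R$ is locally $2$-$v$-generated), then globalize via Corollary \ref{stab2v} using the finite character of stable domains. Nothing further is needed.
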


\begin{proof}
	Since $R$ is locally stable,  $R$ is locally $2$-$v$-generated by Proposition \ref{1dimloc2v}. Thus $R$ is $2$-$v$-generated by Corollary \ref{stab2v}.
\end{proof}

The stability assumption  in Propositions \ref{1dimloc2v} and  \ref{1dim2v} cannot be relaxed to finite stability. Indeed, let $R$ be a one-dimensional valuation domain that is not a DVR. Thus $R$ is finitely stable, but $R$  is neither Mori, nor stable (the maximal ideal of $R$ is not stable); see \cite[Example 3.3]{O1}. On the other hand,  we prove below that a one-dimensional finitely stable Mori domain is stable (Proposition \ref{fstablemori}).

\section{The Mori case}
In this section, we give a characterization of one-dimensional stable domains. We need a few preliminary results.

\begin{proposition}\label{radical} 
Let $I$ be a stable ideal of an integral domain $R$. Then $I_v=I(I_v:I_v)$ is stable, and $(I_v)^2\sub I$.
\end{proposition}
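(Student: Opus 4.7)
The plan is to leverage stability to realize $I$ as an invertible (hence divisorial) ideal over $E := (I:I)$, and then to transfer information between the $v$-operation over $R$ and the divisorial structure over $E$ via the conductor $C := (R:E)$. Standard consequences of stability that I will use without comment are $I(E:I) = E$, $IE = I$, and $I = (E:(E:I))$ (the last because any $x$ with $x(E:I)\subseteq E$ satisfies $xE = x(E:I)\cdot I\subseteq I$, hence $x\in I$).

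The technical heart is to establish the two conductor identities
$$(R:E) = I\cdot(R:I)\qquad\text{and}\qquad(R:I) = (R:E)\cdot(E:I).$$
The first is obtained by writing $1 = \sum a_ib_i$ with $a_i\in I$ and $b_i\in(E:I)$: any $y\in(R:E)$ satisfies $yb_i\cdot I\subseteq yE\subseteq R$, so $yb_i\in(R:I)$ and $y = \sum(yb_i)a_i\in I\cdot(R:I)$; the reverse inclusion uses $a_iE\subseteq I$. Multiplying the first identity by $(E:I)$ and using $I(E:I)=E$ together with the fact that $(R:I)$ is an $E$-module yields the second.

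Using the general identity $(R:AB) = ((R:A):B)$, these give $I_v = (E_v:(E:I))$, where $E_v := (R:(R:E))$. I will then verify the key equality $I_v = E_v\cdot I$: the inclusion $\supseteq$ follows from $E_vI\cdot(E:I) = E_vE = E_v$; the inclusion $\subseteq$ holds because any $x\in(E_v:(E:I))$ satisfies $xE = x(E:I)\cdot I\subseteq E_v\cdot I$, hence $x\in E_vI$. Since $C$ is an $E$-ideal (so $CE = C$), a short computation shows $E_v = (C:C)$, so $E_v$ is an overring of $R$ and in particular $E_v^2 = E_v$. The equality $(I_v:I_v) = E_v$ then follows: $E_v$ acts on $I_v = E_vI$ as a ring, and conversely any $x\in(I_v:I_v)$ satisfies $xC = xI\cdot(R:I)\subseteq I_v\cdot(R:I)\subseteq R$, so $x\in(R:C) = E_v$.

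The three assertions fall out together. First, $I_v = E_v\cdot I = I\cdot(I_v:I_v)$. Second, $I_v\cdot(E:I) = E_vI\cdot(E:I) = E_v\cdot E = E_v = (I_v:I_v)$, so $I_v$ is invertible over its own endomorphism ring and hence stable. Third, $I_v^2\cdot(E:I) = I_v\cdot\bigl(I_v(E:I)\bigr) = I_v\cdot E_v = I_v\subseteq R\subseteq E$, so by the $E$-divisoriality of $I$ we conclude $I_v^2\subseteq(E:(E:I)) = I$. The main conceptual obstacle is isolating $E_v$ as the correct intermediate object and recognizing that it simultaneously equals $(I_v:I_v)$ and the ring $(C:C)$; once the conductor identities are in hand the rest is essentially bookkeeping.
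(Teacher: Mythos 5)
Your proof is correct --- every identity checks out --- but it takes a genuinely different route from the paper's. The paper sets $D=(I_v:I_v)$ at the outset, observes $(I:I)\subseteq (I_v:I)=(I_v:I_v)=D$, extends $I$ to the invertible $D$-ideal $ID$, and then uses the fact that an invertible ideal of the divisorial fractional $R$-ideal $D$ is itself divisorial over $R$ to squeeze $I_v\subseteq ID\subseteq I_vD=I_v$, so that $I_v=ID$ is invertible in $D$; the inclusion $(I_v)^2=I_vI\subseteq I$ then falls out because $I_v\subseteq R\subseteq (I:I)$. You instead work through the conductor $C=(R:E)$, proving $C=I(R:I)$ and $(R:I)=C(E:I)$, and from these you pin down the endomorphism ring explicitly as $(I_v:I_v)=E_v=(R:C)=(C:C)$ --- a ring depending only on $E$ and not on $I$ --- before reading off all three assertions. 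Your argument is longer and more computational, but it buys a sharper structural statement (the identification $(I_v:I_v)=(C:C)$ and the formula $I_v=(R:(R:E))\cdot I$) that the paper's divisoriality-transfer argument never makes explicit; conversely, the paper's proof is shorter precisely because it never needs to compute $(I_v:I_v)$, only to bound it from below by $(I:I)$. One simplification available to you at the end: since $I$ is an integral ideal, $I_v\subseteq R\subseteq (I:I)$, so $I_vI\subseteq I$ and hence $(I_v)^2=I_v\cdot I(I_v:I_v)=(I_v(I_v:I_v))I=I_vI\subseteq I$ directly, without invoking the $E$-divisoriality of $I$ a second time.
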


\begin{proof} 
Let $D=(I_v:I_v)$. Thus $(I:I)\subseteq (I_v:I)=(I_v:I_v)= D$. Since $I$ is an invertible ideal of $(I:I)$ and $(I:I) \subseteq  D$, it follows that $ID$ is an invertible ideal of $D$. As $D$ is a fractional divisorial ideal of $R$, we obtain that $ID$ is a fractional divisorial ideal of $R$. Hence $I_v\subseteq ID$, so $I_v=ID$ since $I_v$ is an ideal of $D$.
 Thus $I_v=ID$ is invertible in $D=(I_v:I_v)$, that is, $I_v$ is a stable ideal of $R$. Also $(I_v)^2=I_v(ID)=(I_vD)I=I_vI \subseteq I$.
\end{proof}	  

\begin{corollary}\label{vfiniteStable} \cite[Lemma 2.7]{GP}
	In a finitely stable domain all the $v$-finite divisorial ideals are stable. In particular, all the divisorial ideals of a finitely stable Mori domain are stable.
\end{corollary}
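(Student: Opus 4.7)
The plan is to reduce the statement directly to Proposition \ref{radical}. Let $I$ be a nonzero $v$-finite divisorial ideal of the finitely stable domain $R$. By hypothesis there exists a finitely generated nonzero ideal $J \subseteq R$ with $I = I_v = J_v$. Since $R$ is finitely stable, $J$ is a stable ideal of $R$. Proposition \ref{radical} applied to $J$ then yields that $J_v$ is a stable ideal of $R$, and since $I = J_v$, we conclude that $I$ itself is stable.

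For the second assertion, recall from the introduction that by the Bario\u{l}a-type characterization \cite[Theorem 2.1]{Bar} a domain $R$ is Mori exactly when every nonzero ideal $I$ contains a finitely generated ideal $J$ with $I_v = J_v$. In particular, every divisorial ideal of a Mori domain is $v$-finite. Combining this with the first part immediately gives that every divisorial ideal of a finitely stable Mori domain is stable.

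The main obstacle, if any, would be checking that the hypotheses of Proposition \ref{radical} really transfer from $I$ to a finite subideal. But this is automatic: we only need a stable ideal whose $v$-closure equals $I$, and the finitely generated witness $J$ supplied by $v$-finiteness serves precisely this role, with stability of $J$ guaranteed by the standing assumption that $R$ is finitely stable. Hence the corollary is essentially just a packaging of Proposition \ref{radical} together with the definition of $v$-finite (respectively, of Mori).
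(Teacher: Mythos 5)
Your proof is correct and is essentially the argument the paper intends: the corollary is stated without an explicit proof precisely because it follows immediately from Proposition \ref{radical} applied to a finitely generated (hence stable) ideal $J$ with $J_v=I$, together with the characterization of Mori domains via $v$-finiteness of divisorial ideals from \cite[Theorem 2.1]{Bar}. The only blemish is the garbled attribution (``Bario\u{l}a''); the reference \cite{Bar} is to Barucci's survey on Mori domains.
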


A nonzero ideal $I$ of a domain is called a \emph{$t$-ideal} if  
$I=\bigcup J_v$, where $J$ runs over all  finitely generated subideals of $I$. Divisorial ideals are $t$-ideals, and in a Mori domain each $t$-ideal is divisorial.

\begin{corollary} \label{radicalv}
\	
\begin{enumerate} 
\item 

A stable radical  ideal is divisorial.

{\rm(Cf. \cite[Corollary 4.13]{O2}. Here we do not assume that the domain $R$ is stable).} 
 
\item
If $I$ is a radical ideal and each finitely generated subideal of  $I$ is stable, then $I$ is a $t$-ideal.

\item
Each nonzero radical ideal of a finitely stable domain is a $t$-ideal.

\item
All the nonzero radical ideals of a finitely stable Mori domain are divisorial and stable.
\end{enumerate} 
\end{corollary}	  

\begin{proof} 
\begin{enumerate} 
\item 
Let $I$ be a  stable radical ideal of $R$. By Proposition \ref{radical}, we have  $(I_v)^2\sub I$, so  $I_v\sub I$ as the ideal $I$ is radical. Hence $I=I_v$ is a divisorial ideal. 
 
\item 
If $J$ is a nonzero finitely generated subideal of $I$,
then $(J_v)^2\sub J\sub I$ by Proposition \ref{radical}. Since the ideal $I$ is radical, we obtain $J_v\sub I$, so $I$ is a $t$-ideal.

\item follows from (2).

\item
All the radical ideals of a Mori domain are divisorial by item (2), so they are also stable by Corollary \ref{vfiniteStable}.

\end{enumerate}	 
\end{proof}	  

\begin{proposition}\label{fstablemori} 
A one-dimensional finitely stable Mori domain is stable.	 
\end{proposition}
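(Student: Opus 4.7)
The plan is to reduce to the local case by localizing, then exploit Proposition \ref{Olb1dims} to get local stability, and finally glue back using finite character.

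First, I would observe that both finite stability and the Mori property localize (as stated in the introduction), so for every maximal ideal $M$ of $R$, the localization $R_M$ is again a one-dimensional finitely stable Mori local domain. Thus it suffices to prove the local claim: a one-dimensional finitely stable Mori \emph{local} domain is stable. By Proposition \ref{Olb1dims}, once I know that the maximal ideal of such a local domain is stable, stability of the whole domain follows immediately.

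So the heart of the matter is the following. In a one-dimensional Mori domain every maximal ideal is divisorial (this is the fact from \cite[Theorem 3.1]{Bar} already used inside the proof of Lemma \ref{1morifc}). By Corollary \ref{vfiniteStable}, every divisorial ideal of a finitely stable Mori domain is stable. Applying these two facts to $R_M$ gives that its maximal ideal $MR_M$ is stable, and hence $R_M$ is stable by Proposition \ref{Olb1dims}.

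Finally I would assemble the global conclusion: by Lemma \ref{1morifc}, a one-dimensional Mori domain has finite character, and by the result of Olberding quoted in the introduction (\cite[Theorem 3.3]{O2}), a domain that is locally stable and of finite character is stable. Therefore $R$ itself is stable. I do not see any real obstacle here; the whole argument is essentially a bookkeeping assembly of previously stated results, and the only thing one has to notice is the key input that in a one-dimensional Mori local domain the maximal ideal is automatically divisorial, so that Corollary \ref{vfiniteStable} forces it to be stable.
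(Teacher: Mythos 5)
Your proof is correct and follows essentially the same route as the paper: localize, show the maximal ideal of $R_M$ is divisorial hence stable by Corollary \ref{vfiniteStable}, invoke Proposition \ref{Olb1dims}, and glue via finite character (Lemma \ref{1morifc}) and \cite[Theorem 3.3]{O2}. The only cosmetic difference is that the paper obtains divisoriality of $MR_M$ from Corollary \ref{radicalv}(3) (radical ideals of finitely stable Mori domains are divisorial), whereas you quote Barucci's theorem that maximal ideals of one-dimensional Mori domains are divisorial; both are valid.
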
	  

\begin{proof} 
For each maximal ideal $M$ of $R$,  $R_M$ is Mori and finitely stable. Hence $MR_M$  is divisorial (Corollary \ref{radicalv} (3)) and so stable
(Corollary \ref{vfiniteStable}).  By Proposition \ref{Olb1dims}, $R_M$ is stable. Since  $R$ has finite character (Lemma \ref{1morifc}), $R$ is stable by \cite[Theorem 3.3]{O2}.  
\end{proof}	  

Actually, as  shown in Theorem \ref{MoriFS} below,  a finitely stable Mori domain is one-dimensional, so it is stable and $2$-$v$-generated (Propositions \ref{fstablemori} and \ref{1dim2v}). 

The following lemma is known;  we give a proof for lack of a reference.

\begin{lemma}\label{IIMori} 
	Let $I$ be  a divisorial ideal of a Mori domain $R$. Then the domain $(I:I)$ is Mori.	 
\end{lemma}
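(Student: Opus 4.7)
The plan is to show that $D := (I:I)$ is itself a divisorial fractional ideal of $R$, so that the conductor $C := (R:D)$ satisfies $(R:C) = D$; this will let me transfer the ACC on divisorial ideals from $R$ to $D$ through the assignment $J \mapsto CJ$.

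First I would prove that $D$ is a divisorial fractional $R$-ideal. Writing
$$D = \bigcap_{0 \ne x \in I} x^{-1}I,$$
each $x^{-1}I$ is divisorial (since $I = I_v$, one has $x^{-1}I = (R : x(R:I))$). An arbitrary intersection of divisorial fractional ideals is itself divisorial, because $\bigcap_\alpha (R:L_\alpha) = (R : \sum_\alpha L_\alpha)$. Setting $C := (R:D)$, one has $0 \ne I \subseteq C$ (since $ID \subseteq I \subseteq R$), and the divisoriality of $D$ yields $(R:C) = (R:(R:D)) = D$. A quick check shows that $C$ is also a $D$-submodule of $K$, and in particular $CJ \subseteq CD = C \subseteq R$ for any integral ideal $J$ of $D$.

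The second step is the key identity
$$(D:J) = (R:CJ)$$
for every fractional $D$-ideal $J$, which is immediate from $D = (R:C)$: indeed $xJ \subseteq (R:C)$ is equivalent to $xCJ \subseteq R$. Now let $J_1 \subseteq J_2 \subseteq \cdots$ be any ascending chain of divisorial integral ideals of $D$. The chain $(CJ_n)_v^R$ of divisorial integral ideals of $R$ stabilizes by the Mori hypothesis on $R$; hence $(R:CJ_n) = (R:(CJ_n)_v^R)$ stabilizes, and by the key identity $(D:J_n)$ stabilizes. Since each $J_n = (D:(D:J_n))$ by divisoriality in $D$, the chain $(J_n)$ stabilizes, and so $D$ is Mori.

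The main point to get right (rather than a genuine obstacle) is the divisoriality of $D$ as a fractional $R$-ideal, since this is what secures $(R:(R:D)) = D$ and thereby makes the transfer of ACC from $R$ to $D$ work; once this is in hand, the rest of the argument is purely formal manipulation of the $v$-operation.
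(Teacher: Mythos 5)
Your proof is correct. It rests on the same foundation as the paper's: both arguments first establish that $D=(I:I)$ is a divisorial fractional ideal of $R$ and then transfer the ascending chain condition from $R$ to $D$. The difference lies in how the transfer is executed. The paper observes that, $D$ being divisorial over $R$, every divisorial ideal $J_n$ of $D$ is already a divisorial \emph{fractional} ideal of $R$, and then simply multiplies the chain by a single nonzero element $c\in I$ to land in the integral divisorial ideals of $R$; this is shorter, but the step ``divisorial over $D$ implies divisorial over $R$'' is asserted rather than justified. Your route via the conductor $C=(R:D)$ and the identity $(D:J)=(R:CJ)$ is slightly longer but supplies exactly the missing justification (every $(D:S)$ equals $(R:CS)$, hence is divisorial over $R$), at the cost of an extra pass through the $v$-operation: you stabilize $(CJ_n)_v$, deduce stabilization of $(D:J_n)$, and recover $J_n$ by double dualization in $D$. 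Both arguments are valid; yours is the more self-contained, the paper's the more economical.
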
	

\begin{proof} 
	Let $J_1 \subseteq  J_2 \subseteq  \dots$ be an infinite increasing sequence of divisorial ideals of the domain $(I:I)$. Since $I$ is  a divisorial ideal of $R$, the domain $(I:I)$ is  a fractional divisorial ideal of $R$, so $J_1,J_2,\dots$ are fractional divisorial ideals of $R$. Let $c$ be a nonzero element of $I$. Then
	$cJ_1 \subseteq  cJ_2\subseteq  \dots$ is a an increasing sequence of divisorial ideals of $R$, so $cJ_n=cJ_{n+1}$ for $n\gg0$. Thus the sequence $J_1 \subseteq J_2\subseteq  \dots$  stabilizes, implying that $(I:I)$	is Mori. 
\end{proof}

\begin{proposition}\label{fslMori} 
	Let $(R,M)$ be a finitely stable local Mori domain. If $T$ is a finite extension of $R$, then $R$ is  one-dimensional, stable and every ideal of $R$ is $2$-generated, thus the domain $R$ is Noetherian. (see  \ref{olbcons} for the definition  of $T$).	  
\end{proposition}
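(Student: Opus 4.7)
The plan is to first establish that $T$ is a principal ideal domain. Since $T$ is a finite $R$-module, Proposition \ref{charab} places us in case (a) of Theorem \ref{Olb}, so $T=R_k$ for some integer $k\ge 0$, and $T$ has at most two maximal ideals, each principal by Theorem \ref{Olb}(c).

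Next I would show by induction on $i$ that every $R_i$ with $0\le i\le k$ is Mori. The base case $R_0=R$ is given. For the inductive step, suppose $i\le k$ and $R_{i-1}$ is Mori; then $R_{i-1}$ is local, finitely stable, and has stable maximal ideal (by Theorem \ref{Olb}(1)), so by Corollary \ref{radicalv}(4) the nonzero radical ideal $M_{i-1}$ is divisorial in $R_{i-1}$. Lemma \ref{IIMori} applied to $R_{i-1}$ and $M_{i-1}$ then yields that $R_i=(M_{i-1}:M_{i-1})$ is Mori. In particular $T=R_k$ is Mori, hence Archimedean (since Mori implies accp, as recalled in the introduction). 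Because $T$ has principal maximal ideals, Corollary \ref{archPID}(1) forces $T$ to be a PID; in particular $T$ is one-dimensional.

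Since $T$ is an integral extension of $R$ (Theorem \ref{Olb}(2)), $R$ itself is one-dimensional. Its maximal ideal $M$ is divisorial and stable by Corollary \ref{radicalv}(4), so Proposition \ref{Olb1dims} gives that $R$ is stable; then $T=R'$ by the final statement of Theorem \ref{Olb}. Finally, since $R'=T$ is a finite $R$-module and $R$ is one-dimensional and stable, Theorem \ref{a2gen} implies that every ideal of $R$ is $2$-generated, so $R$ is Noetherian.

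The main obstacle is propagating both the Mori property and the finite stability (with stable maximal ideals) along the chain $R=R_0\subseteq R_1\subseteq\dots\subseteq R_k=T$: the former is handled by Corollary \ref{radicalv}(4) together with Lemma \ref{IIMori}, while the latter is built into Theorem \ref{Olb}(1). Once these survive up to step $k$, the collapse to one-dimensionality via Corollary \ref{archPID}(1) is immediate and the conclusion follows from the one-dimensional theory developed in Section 2.
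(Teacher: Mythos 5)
Your proposal is correct and follows essentially the same route as the paper: propagate the Mori property up the chain $R_0\subseteq\dots\subseteq R_k=T$ via Corollary \ref{radicalv}(4) and Lemma \ref{IIMori}, conclude $T$ is Archimedean with principal maximal ideals hence one-dimensional, and finish with Theorem \ref{a2gen}. The only cosmetic differences are that you inline the argument of Proposition \ref{Tarch} (via Corollary \ref{archPID}) and invoke Proposition \ref{Olb1dims} where the paper cites Proposition \ref{fstablemori}; both are equivalent uses of the same underlying facts.
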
	  

\begin{proof} 
	By Corollary \ref{radicalv} (4), the maximal ideal $M$ of $R$ is divisorial and stable.
	We use the setting of Theorem \ref{Olb}. By Proposition \ref{charab}, $T=R_n$ for some integer $n\ge0$. By Lemma \ref{IIMori}, the domain $R_1=(M:M)$ is Mori. By induction, $R_k$ is Mori for all $k\ge0$, so $T=R_n$ is a Mori domain. Since $T$ has principal maximal ideals (Theorem \ref{Olb} (c)), $T$ is one-dimensional \cite[Theorem 3.4]{Bar}. So $R$ is one-dimensional. By Proposition \ref{fstablemori},  $R$ is stable. By Theorem \ref{a2gen}, every ideal of $R$ is $2$-generated.	 
\end{proof}	  

\begin{proposition}\label{MoriLoc} 
	Let $(R,M)$ be a local domain.
	The following conditions are equivalent:
	\begin{enumerate} 
		\item[\rm(i)] 
		$R$ is one-dimensional and  stable.  
		
		\item[\rm(ii)] 
		$R$ is finitely stable and Mori. 
	\end{enumerate}	 
\end{proposition}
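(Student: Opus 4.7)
The plan is to prove the two directions separately, leaning heavily on results already established in the paper. The direction (i) $\Rightarrow$ (ii) will be short: stability trivially implies finite stability, and Proposition \ref{1dimloc2v} already shows that a one-dimensional stable local domain is $2$-$v$-generated, hence Mori. So essentially nothing more needs to be said for this implication.

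For (ii) $\Rightarrow$ (i), I would first extract from the hypotheses that $M$ is stable. Since $R$ is a finitely stable Mori domain, Corollary \ref{radicalv}(4) gives that every nonzero radical ideal of $R$ is divisorial and stable; in particular $M$ is stable. This places us in the setting of Theorem \ref{Olb} and Construction \ref{olbcons}, and lets us split into the two cases of that construction via Proposition \ref{charab}.

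In the case where $T$ is a finite $R$-extension (condition (a)), Proposition \ref{fslMori} directly yields that $R$ is one-dimensional, stable (and even Noetherian and $2$-generated), which is more than we need. In the case where $T$ is not a finite extension (condition (b)), the key observation is that $R$, being Mori, is Archimedean (as recalled in the introduction). Under condition (b) the ring $T$ is local, so Corollary \ref{Tloc1d} (equivalently, Proposition \ref{Tlocal}) applies and forces $R$ to be one-dimensional. Once one-dimensionality is secured, Proposition \ref{fstablemori} promotes finite stability to stability, completing the implication.

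The main obstacle — which has already been cleared by earlier results — is ruling out the possibility that $R$ is higher-dimensional in the infinite-chain case of Construction \ref{olbcons}. This is exactly the content of Corollary \ref{Tloc1d}, where the Archimedean property (coming for free from the Mori hypothesis) is used decisively to collapse the dimension of $T$, and hence of $R$, down to one. Once this is in hand, the proof is essentially a matter of stitching together Corollary \ref{radicalv}, Proposition \ref{charab}, Proposition \ref{fslMori}, Corollary \ref{Tloc1d}, and Proposition \ref{fstablemori}, with no further computation required.
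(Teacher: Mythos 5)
Your proposal is correct and follows essentially the same route as the paper's own proof: Proposition \ref{1dimloc2v} for (i) $\Rightarrow$ (ii), and for (ii) $\Rightarrow$ (i) the same chain of Corollary \ref{radicalv}(4) to get $M$ stable, Proposition \ref{fslMori} to dispose of case (a), Corollary \ref{Tloc1d} (via Mori $\Rightarrow$ Archimedean) for case (b), and Proposition \ref{fstablemori} to upgrade to stability. Nothing further is needed.
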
	  

\begin{proof} 
	(i) $\ra$ (ii) See Proposition \ref{1dimloc2v}.
	
	(ii) $\ra$ (i)
	By Corollary \ref{radicalv} (4), the maximal ideal $M$ of $R$ is stable.
	By Proposition \ref{fslMori}, we have to consider just the case (b) of Theorem \ref{Olb}.  In this case, by Theorem \ref{Tlocal},  $R$ is one-dimensional. By Proposition \ref{fstablemori}, $R$ is stable.
\end{proof}	  

In the next theorem we globalize Proposition \ref{MoriLoc}:

\begin{theorem}\label{MoriFS} 
	Let $R$ be an integral domain.
	The following two conditions are equivalent:
	\begin{enumerate} 
		\item[\rm(i)] 
		$R$ is one-dimensional and stable.
		
		\item[\rm(ii)] 
		$R$ is finitely stable and Mori. 
	\end{enumerate}	 
	Moreover, if $R$ satisfies these two equivalent conditions, then  every overring of  $R$ also satisfies the two conditions,  every overring of $R$ is $2$-$v$-generated, and $R'$ is a Dedekind domain.	 
\end{theorem}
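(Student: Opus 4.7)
The plan is to establish the equivalence by globalizing Proposition \ref{MoriLoc}, then derive the moreover statement from structural properties of one-dimensional stable domains.

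For (i) $\Rightarrow$ (ii), stability implies finite stability by definition, and by Proposition \ref{1dim2v} a one-dimensional stable domain is $2$-$v$-generated, hence Mori. For (ii) $\Rightarrow$ (i), both finite stability and the Mori property localize, so for every maximal ideal $M$ of $R$ the domain $R_M$ is finitely stable and Mori; by Proposition \ref{MoriLoc} each $R_M$ is one-dimensional and stable, hence $\dim R = 1$. Lemma \ref{1morifc} then gives that $R$ has finite character, and combining local stability with finite character, \cite[Theorem 3.3]{O2} yields that $R$ is stable, exactly as in the proof of Proposition \ref{fstablemori}.

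For the moreover part, let $S$ be an overring of $R$. Since $R$ is one-dimensional, $\dim S \le 1$; the field case is trivial, so assume $\dim S = 1$. Invoking Olberding's result that every overring of a stable domain is stable, $S$ is one-dimensional and stable, so by the equivalence $S$ also satisfies (ii), and by Proposition \ref{1dim2v} $S$ is $2$-$v$-generated. Finally, $R'$ is Pr\"ufer by Theorem \ref{OlbCharFS}(2), is one-dimensional since it is integral over the one-dimensional $R$, and has finite character because $R$ has finite character and each maximal ideal of $R$ has at most two maximals of $R'$ lying over it by Theorem \ref{OlbCharFS}(3); a one-dimensional Pr\"ufer domain of finite character is Dedekind.

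The main obstacle is the input that every overring of a stable domain is stable. If a direct citation is not immediately at hand, one can argue locally: for each maximal ideal $N$ of $S$, the localization $S_N$ is an overring of $R_{N \cap R}$, and by Proposition \ref{Olb1dims} the one-dimensional stable local domain $R_{N\cap R}$ has integral closure a Dedekind domain with at most two maximal ideals; this tight structure forces $S_N$ to be stable. Combining with finite character of $S$, inherited from $R$ via the finite fibers in Theorem \ref{OlbCharFS}(3), a further application of \cite[Theorem 3.3]{O2} gives stability of $S$, and then the equivalence already proved completes the moreover part.
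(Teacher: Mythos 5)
Your proof of the equivalence (i) $\Leftrightarrow$ (ii) is correct and essentially the paper's argument: localize, apply Proposition \ref{MoriLoc}, get finite character from Lemma \ref{1morifc}, and recover global stability via \cite[Theorem 3.3]{O2}. The ``moreover'' part, however, contains two steps that are false as stated.

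First, ``since $R$ is one-dimensional, $\dim S\le 1$'' is not a valid general principle: a one-dimensional domain whose valuative dimension exceeds $1$ has (valuation) overrings of dimension $\ge 2$, so one-dimensionality of $R$ alone does not bound the dimension of overrings. The conclusion does hold here, but only because $R'$ is Pr\"ufer; the paper invokes \cite[Theorem 6]{gpp} for precisely this point. (A self-contained repair: $S'\supseteq R'$ is an overring of the one-dimensional Pr\"ufer domain $R'$, hence Pr\"ufer of dimension $\le 1$, and $\dim S=\dim S'$ since $S'$ is integral over $S$.)

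Second, ``a one-dimensional Pr\"ufer domain of finite character is Dedekind'' is false: a rank-one non-discrete valuation domain is one-dimensional, Pr\"ufer, local (so trivially of finite character), and not Dedekind. Finite character does not make the localizations DVRs; you need the extra input that $R'$ is either Mori (being an overring of $R$, which satisfies condition (ii), and a Pr\"ufer Mori domain is Dedekind) or stable, hence strongly discrete Pr\"ufer, so that its localizations are one-dimensional strongly discrete valuation domains, i.e., DVRs. This is how the paper concludes. Your main route for ``every overring of a stable domain is stable'' (citing Olberding) is exactly what the paper uses; the fallback sketch via Proposition \ref{Olb1dims} (``this tight structure forces $S_N$ to be stable'') is too vague to stand as a proof, but it is not needed.
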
	  

\begin{proof} 
	(i) $\ra$ (ii)	Since $R$ is locally stable, we obtain that $R$ is locally Mori by Proposition \ref{MoriLoc}.  Since $R$ has finite character, it follows that $R$ is Mori \cite[Theorem 2.4]{Bar}.  
	
	(ii) $\ra$ (i)
	Since $R$ is locally finitely stable and locally Mori, it follows from Proposition  \ref{MoriLoc} that  $R$ is one-dimensional and locally stable. Since $R$ has finite character (Lemma \ref{1morifc}), $R$ is stable.
	
	Assume that $R$ satisfies the two conditions.  Let $D$ be an overring of $R$. Since $R$ is one-dimensional and $R'$ is Pr\"ufer (as $R$ is stable), it follows that each overring of $R$ is  is one-dimensional by \cite[Theorem 6]{gpp}. Since $R$ is stable, each overring of $R$ is stable. A one-dimensional stable domain is $2$-$v$-generated by Proposition \ref{1dim2v}. Finally, $R'$ is Pr\"ufer and Mori, so it is Dedekind (alternatively, this follows from that a stable one-dimensional Pr\"ufer domain is Dedekind).
\end{proof}	  

In connection with Theorem \ref{MoriFS}, recall that an integral domain is Noetherian $2$-generated if and only if it is  one-dimensional, stable and divisorial (\cite[Theorem 3.1]{O6} and \cite[Theorem 7.3]{BS2}).

However, if we assume just that $R$ is a $2$-$v$-generated domain, then $R$ is not necessarily one-dimensional, and so also not finitely stable. Indeed, any Krull domain is $2$-$v$-generated  \cite[Proposition 1.2]{MZ}.  In addition, it is not true that in a $2$-$v$-generated domain each divisorial ideal is stable.
In fact, if $R$ is a Krull domain, stability coincides with invertibility. Thus each divisorial ideal of a Krull domain $R$ is stable (i.e., invertible) if and only if $R$ is locally factorial \cite[Lemma 1.1]{Bou}. On the other hand, a one-dimensional Krull domain is Dedekind and so each nonzero ideal is divisorial and stable.

In view of this example and of the $2$-generated case, we ask:

\begin{question}
	Let $R$ be a $2$-$v$-generated domain $R$. Are  the divisorial ideals of $R$ $v$-stable? If $R$ is one-dimensional, are the divisorial ideals of $R$ stable?
\end{question}

Recall that an ideal $I$ of a domain $R$ is {\em $v$-invertible} if $(I(R:I))_v=R$ and that
a divisorial ideal $I$ of $R$ is {\em $v$-stable} if $I$ is $v$-invertible in the ring $(I:I)$, that is $(I(I:I^2))_v=(I:I)$.

\stepcounter{npart}
\newpage

\part{On Finitely Stable Domains, II}

\footnotesize{\textit{Keywords and phrases.} {accp, Archimedean domain, completely integrally closed, finite character, finitely stable, locally Archimedean, Mori domain, stable ideal.}

\begin{abstract}
Among other results, we prove the following:
\begin{enumerate}

\item
A locally Archimedean stable domain satisfies accp.

\item
A stable domain $R$  is  Archimedean if and only if every nonunit  of $R$ belongs to a height-one prime ideal of the integral closure $R'$ of $R$ in its quotient field (this result is related to Ohm's Theorem for Pr\"ufer domains).

\item
An  Archimedean  stable domain $R$ is one-dimensional if and only if $R'$ is equidimensional
(generally, an Archimedean stable local domain is not necessarily one-dimensional).  

\item
An Archimedean finitely stable semilocal domain with stable maximal ideals is locally Archimedean, but generally, neither Archimedean stable domains, nor Archimedean semilocal domains are necessarily  locally Archimedean.
\end{enumerate}
\end{abstract}
\maketitle

\section{Introduction}

In the following, $R$ is an integral domain with quotient field $K$ and $R\neq K$.  An overring of $R$ is a domain $T$ such that $R\sub T\sub K$. We denote by $R^\prime$ the integral closure of $R$.
By an {\it ideal} we mean an integral ideal. 

This part deals with Archimedean finitely stable domains. 

We recall that a nonzero ideal $I$ of $R$ is called \emph{stable} if $I$ is invertible in its endomorphism ring $E(I):=(I:I)$. $R$ is \emph{finitely stable} if each nonzero finitely generated ideal is stable and is \emph{stable} if each ideal is stable.

Since 1998,  finitely stable and stable domains have been thoroughly investigated by Bruce Olberding in a series of papers \cite{O3}-\cite{O7}.
Our paper heavily relies on Olberding's work. 
We thank B. Olberding for his valuable help. Also, as he communicated to us,  his articles \cite{O6, O1, O2} contain some errors.

Of course, when $R$ is a Noetherian domain, stability and finite stability coincide, but in general these two classes of rings are distinct, even  if $R$ is integrally closed: in this case $R$ is finitely stable if and only if it is Pr\"ufer, that is, each nonzero finitely generated ideal is invertible. Indeed, a domain $R$ is integrally closed if and only if $R=E(I)$ for each nonzero finitely generated ideal $I$.  However, a valuation domain is stable if and only if it is \emph{strongly discrete}, that is, each nonzero prime ideal is not idempotent \cite[Proposition 7.6]{BS2}. Thus a  valuation domain that is not strongly discrete is finitely stable, but not stable.

 A domain $R$ is finitely stable if and only if $R_M$ is finitely stable, for each maximal ideal $M$ \cite[Proposition 7.3.4]{fhp}. Actually, if $I$ is a  stable ideal of $R$, then $I_S$ is a  stable ideal of $R_S$ for each multiplicative part $S\sub R$.
 
 A domain has \emph{finite character} if each nonzero element is contained at most in finitely many maximal ideals. A finitely stable domain need not have finite character, since any Pr\"ufer domain is finitely stable. On the other hand,  a  domain is stable if and only if it is locally stable and has finite character \cite[Theorem 3.3]{O2}.
 
  We recall that a domain $R$ is called  \emph{Archi\-me\-dean} if $\bigcap_{n\geq 0}r^nR=(0)$, for each nonunit $r\in R$. If $R$ satisfies the ascending chain condition on principal ideals (for short, accp), then $R$ is Archimedean. Indeed,  the domain $R$ satisfies accp if and only if $\bigcap_{n\ge1}(\prod_{i=1}^n r_iR) =(0)$ for any nonunits $r_i\in R$, equivalently $\bigcap_{n\ge1}a_nR=(0)$ if the sequence of principal ideals $a_nR$  is strictly decreasing.
A \emph{Mori domain} is a domain satisfying the ascending chain condition on divisorial ideals, so a Mori domain is Archimedean. Besides accp domains, the class of  Archimedean domains includes also one-dimensional domains \cite[Corollary 1.4]{Ohm} and completely integrally closed domains
\cite[Corollary 13.4]{gilmer}. We recall that a domain $R$ is completely integrally closed if and only if $R=E(I)$ for each nonzero ideal $I$. Hence completely integrally closed domains are integrally closed and the converse holds in the Noetherian case. 
A completely integrally closed stable domain is Dedekind.

In \cite[Theorem 4.8]{GR1} we proved that a domain is stable and one-dimensional if and only if it is Mori and finitely stable. Here, among other results,  we show that an Archimedean stable domain is one-dimensional if and only if $R'$ is equidimensional (Proposition \ref{equidim}). 
The assumption that $R'$ is equidimensional is essential, as shown in Example \ref{ex6}.

As usual, if $\mc P$ is a property of rings, then a ring $R$ is {\em locally} $\mc P$ if $R_M$ is $\mc P$ for each maximal ideal $M$ of $R$. Generally, this does not imply that $R_P$ is $\mc P$ for every prime ideal $P$ even for a local domain (see Example \ref{ex1} for the Archimedean property).  The property $\mc P$ {\em localizes} if  every ring satisfying $\mc P$ is locally $\mc P$. The following properties localize:
stability, finite stability, Mori. However, as it is well-known, the Archimedean property, the accp and the c.i.c. (an abbreviation for ``completely integrally closed'') properties do not localize  (see Section \ref{examples} below).

When studying the Archimedean property, we  use Corollary \ref{fsohm}: a  stable domain $R$ is  Archimedean if and only if each nonunit of $R$ belongs to a height-one prime ideal of $R'$ (this result is related to Ohm's Theorem for Pr\"ufer domains \cite[Corollary 1.2]{Ohm}). We also prove that a stable domain is locally Archimedean if and only if $\bigcap_{n\ge1} M^n=(0)$ for each maximal ideal $M$ (Proposition \ref{larchd}); this condition implies accp (Proposition \ref{accpcond}). Hence a stable locally Archimedean domain satisfies accp (Corollary \ref{stableaccp}).

By Example \ref{ex3},  a stable Archimedean  domain need not be locally Archimedean, and by Example \ref{ex2} a semilocal Archimedean domain (even completely integrally closed) need not be locally Archimedean. On the positive side we show that an Archimedean finitely stable semilocal domain with stable maximal ideals is locally Archimedean (Proposition \ref{semiarch}).

The following results (1.1-1.4), due to Olberding, are basic for our paper.
\begin{theorem}\label{OlbCharFS2}\cite[Corollary 5.11]{OG} 
A domain $R$ is finitely stable if and only if it satisfies the following conditions:
\begin{enumerate} 
\item 
$R'$ is a quadratic extension of $R$;
\item
$R'$ is a Pr\"ufer domain;
\item Each maximal ideal of $R$ has  at most $2$ maximal ideals of $R'$ lying over it.
\end{enumerate}
\end{theorem}
 	
Recall that a domain $D$ is a {\em quadratic extension} of a domain $R$ if for each $x,y\in D$ we have $xy \in xR+yR+R$.   
Olberding  also proved that, in the local one-dimensional case, stability and finite stability are equivalent provided the maximal ideal is stable:

\begin{proposition}  \label{Olb1dims2} \cite[Theorem 4.2]{O8} Let $R$ be a local one-dimensional domain. The following conditions are equivalent:
\begin{enumerate}
\item[(i)] $R$ is stable;
\item[(ii)] $R$ is finitely stable  with stable maximal ideal;
\item[(iii)] $R'$ is a quadratic extension of $R$ and $R'$ is a Dedekind domain with at most two maximal ideals.
\end{enumerate}
\end{proposition}

\begin{construction}\label{olbcons}\cite[Section 4]{O2}
	\rm Let $(R,M)$ be a local  domain.
	Set $R_i=\{0\}$ for $i<0$, $R_0=R$ and $M_0=M$. Define inductively for $n > 0$:
	\begin{align*}
R_n&= R_{n-1} \text{ if } R_{n-1} \text{ is not local },\\ 
\text{ and } R_n&=E(M_{n-1})=(M_{n-1}:M_{n-1}) \text{ if }
	R_{n-1} \text{ is local with maximal ideal denoted by } M_{n-1}.
\end{align*}
	Set $T=\bigcup_{n\geq 0}R_n$.
	
	Thus we have:
	\begin{enumerate}
		\item[(a)]
		If there exists an integer $k>0$ such that
		$R_k$ is not local, but $R_i$ is local for $0\le i<k$, then $R_n=R_k$ for all $n\ge k$, and $T=R_k$.
		\item[(b)]
		If $R_n\subsetneq R_{n+1}$ for all $n\ge0$, all the rings $R_n$ are local.
	\end{enumerate}
\end{construction}

We will use  repeatedly the following  theorem of Olberding.	

\begin{theorem} \cite[Corollary 4.3, Theorem 4.8]{O2} {\rm and its proof, and} \cite[Theorem 5.4]{OG} \label{Olb2} Let $R$ be a finitely stable local domain with stable maximal ideal $M$.  With the notation of \ref{olbcons} we have:

\begin{enumerate}
		\item[(1)] Each $R_n$ is finitely stable with stable maximal ideals, and there exists an element $m\in M$ such that $M=mR_1$. Moreover, for $k\geq 1$, if $R_k$ is local with maximal ideal $M_k$, then  $M_k=mR_{k+1}=MR_{k+1}$, and if $T$ is local, then its maximal ideal is $mT=MT$.   

\item[(2)] Each $R_n$ is a finitely generated $R$-module, thus $T$ is an integral extension of $R$. 
\end{enumerate}

We also have:
	
\begin{enumerate}
\item[(a)]  If  $T=R_n$ for some $n\geq 0$, then $T$ is a finitely generated $R$-module, and $T$ has at most two maximal ideals.
		
\item[(b)] If $T\neq R_n$ for all $n\geq 0$,  then  $T$ is local.

\item[(c)] The maximal ideals of $T$ are principal, and the Jacobson radical of $T$ is equal to
$mT=MT$, where $mR_1=M$.
\end{enumerate}
In addition,
 if $R$ is  a  stable domain, then $T$ is equal to the integral closure $R^\prime$ of $R$, and $R'$ is a strongly discrete Pr\"ufer domain.
 	\end{theorem}

\section{On the Archimedean  property}

We start with some generalities on the Archimedean property. Then we prove that 
a finitely stable domain $R$ with stable maximal ideals is locally Archimedean if and only if $\bigcap_{n\ge1} M^n=(0)$ for each maximal ideal $M$ of $R$ (Proposition \ref{larchd}). We deduce from this result that
a locally Archimedean stable domain satisfies accp (Corollary \ref{stableaccp}). 

Many results in this section are related to  the following theorem of J. Ohm, which will be extended in Theorem \ref{afsohm} below.

\begin{theorem}\label{ohm}\cite[Corollary 1.6]{Ohm}.
	Let $R$ be a Pr\"ufer domain. We have:
	
	{\rm (1)} If $a$ is a nonunit of $R$ belonging to just finitely many maximal ideals, then	$\bigcap_{n\ge1}a^nR=(0)$ if and only if  $a$ belongs to a height-one prime ideal.
	
	Hence:
	
	{\rm (2)}  If $R$ has  finite character, then
	$R$  is Archimedean  if and only if each nonunit of $R$ belongs to a height-one prime ideal.
\end{theorem}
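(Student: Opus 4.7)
The plan is to reduce the question to valuation domains via the Pr\"ufer hypothesis, exploiting that $R=\bigcap_M R_M$ (with $M$ ranging over the maximal ideals) and that each $R_M$ is a valuation domain. The main valuation-domain lemma I would invoke is: for a nonzero nonunit $b$ of a valuation domain $V$, the ideal $\bigcap_{n\ge 1}b^nV$ equals the union of all primes of $V$ not containing $b$; hence $\bigcap_n b^nV=(0)$ if and only if $\sqrt{bV}$ has height one. This follows because in a valuation ring every radical of a principal ideal is prime and the primes form a chain, so $\sqrt{bV}$ is the smallest prime containing $b$, the primes not containing $b$ are exactly those strictly below it, and each such prime $P$ lies in every $b^nV$ by a quick valuation-theoretic computation on convex subgroups.

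For part (1), the direction $(\Leftarrow)$ is almost immediate: if $a\in P$ with $\height P=1$, then $R_P$ is a one-dimensional valuation domain, hence Archimedean by \cite[Corollary 1.4]{Ohm}, so $\bigcap_n a^nR_P=(0)$; since $R\hookrightarrow R_P$ this forces $\bigcap_n a^nR=(0)$. Notice that this direction uses neither the finite-character assumption on $a$ nor anything else beyond the fact that $R$ is Pr\"ufer.

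For $(\Rightarrow)$, I would argue the contrapositive. Assume $a$ belongs to no height-one prime of $R$, and let $M_1,\dots,M_k$ be the (finitely many) maximal ideals containing $a$. The primes of $R_{M_i}$ correspond bijectively to the primes of $R$ contained in $M_i$; in particular $\sqrt{aR_{M_i}}=Q_iR_{M_i}$ for a prime $Q_i$ of $R$ with $a\in Q_i\subseteq M_i$. By hypothesis $\height Q_i\ge 2$, so there is a nonzero prime $P_i\subsetneq Q_i$ of $R$; since $a\notin P_i$, the valuation lemma gives $P_iR_{M_i}\subseteq\bigcap_n a^nR_{M_i}$, hence $I_i:=(\bigcap_n a^nR_{M_i})\cap R\supseteq P_i$ is a nonzero ideal of $R$. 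Now $R=\bigcap_M R_M$ yields $a^nR=\bigcap_M a^nR_M$ (the nontrivial inclusion uses that $R$ is Pr\"ufer), and $\bigcap_n a^nR_M=R_M$ whenever $a\notin M$, so $\bigcap_n a^nR\supseteq I_1\cap\cdots\cap I_k$; this intersection is nonzero as a finite intersection of nonzero ideals in a domain, contradicting $\bigcap_n a^nR=(0)$. Part (2) follows immediately by applying (1) to each nonunit of $R$. The main technical obstacle is verifying the identity $a^nR=\bigcap_M a^nR_M$ and carefully sorting out which factors in $\bigcap_M\bigcap_n a^nR_M$ are nontrivial; once those bookkeeping steps are in place, the finite-character hypothesis makes the gluing painless.
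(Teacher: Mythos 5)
Your argument is correct, but note that the paper does not prove this statement at all: Theorem \ref{ohm} is imported verbatim from \cite[Corollary 1.6]{Ohm}, so there is no internal proof to compare against. What you have written is essentially the classical argument (and, in substance, Ohm's): localize at the finitely many maximal ideals containing $a$, use that each localization is a valuation domain to identify $\bigcap_{n\ge1}a^nR_M$ with the largest prime of $R_M$ not containing $a$ (equivalently, the union of the primes strictly below $\sqrt{aR_M}$, via the convex-subgroup description), and glue with $a^nR=\bigcap_M a^nR_M$. Two small remarks: the identity $a^nR=\bigcap_M a^nR_M$ holds for principal ideals in \emph{any} domain (divide by $a^n$ and use $R=\bigcap_M R_M$), so the Pr\"ufer hypothesis is not needed there — it is needed only to make $\sqrt{aR_{M_i}}$ prime and to run the valuation-theoretic computation in the contrapositive direction; and your observation that the implication ``$a$ lies in a height-one prime $\Rightarrow\bigcap_n a^nR=(0)$'' needs neither finite character nor Pr\"ufer is exactly what the paper records separately as Corollary \ref{easyohm}. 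Your bookkeeping of the factors $\bigcap_n a^nR_M=R_M$ for $M\not\ni a$ and the nonvanishing of the finite intersection $P_1\cap\cdots\cap P_k$ is sound, so the proof stands as a valid replacement for the citation.
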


\begin{corollary}\label{prufsemi}
	An Archimedean Pr\"ufer  domain of finite character and with just finitely many height-one prime ideals is one-dimensional.
	In particular, an Archimedean Pr\"ufer semilocal domain is one-dimensional.	 
\end{corollary}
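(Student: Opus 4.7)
\smallskip

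\noindent\textbf{Proof proposal.} The plan is to combine Ohm's Theorem (Theorem \ref{ohm}(2)) with prime avoidance in order to force every maximal ideal to coincide with one of the finitely many height-one primes. Specifically, since $R$ is a Pr\"ufer domain of finite character that is Archimedean, Theorem \ref{ohm}(2) tells us that every nonunit of $R$ lies in a height-one prime ideal. Let $P_1,\dots,P_n$ be the (finitely many, by hypothesis) height-one primes of $R$, and let $M$ be any maximal ideal. Because every element of $M$ is a nonunit, we get
\[
M \;\subseteq\; \bigcup_{i=1}^n P_i.
\]
By the standard prime avoidance lemma for a finite union of prime ideals, $M\subseteq P_i$ for some $i$. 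Since $P_i$ is a proper ideal of $R$ and $M$ is maximal, $M=P_i$, which has height one. Thus $\dim R =1$.

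For the semilocal assertion, it suffices to observe that a semilocal Pr\"ufer domain $R$ automatically satisfies both remaining hypotheses. Finite character is immediate, since $R$ has only finitely many maximal ideals. Moreover, $R$ has only finitely many height-one primes: each height-one prime $P$ of $R$ is contained in some maximal ideal $M_i$, and then $PR_{M_i}$ is a height-one prime of the valuation domain $R_{M_i}$. Since valuation domains have a linearly ordered spectrum, each $R_{M_i}$ contains at most one height-one prime, so there are at most as many height-one primes as maximal ideals. Hence the hypotheses of the first part apply, and $R$ is one-dimensional.

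\emph{Main obstacle.} There is no real difficulty beyond recognizing that Ohm's Theorem immediately reduces the problem to a prime avoidance argument; the only point requiring a brief justification is the finiteness of height-one primes in the semilocal case, which is handled by passing to the valuation domains $R_{M_i}$ and using the linear ordering of their prime spectra.
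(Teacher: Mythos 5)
Your proof is correct and follows essentially the same route as the paper: apply Ohm's Theorem \ref{ohm}(2) to place each maximal ideal inside the finite union of height-one primes, conclude by prime avoidance that it equals one of them, and for the semilocal case note that a semilocal Pr\"ufer domain has finite character and only finitely many height-one primes. The paper leaves the prime-avoidance step and the counting of height-one primes implicit; you have simply made them explicit, and both justifications are sound.
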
	  

\begin{proof} 
	Let $M$ be a maximal ideal of $R$. By Ohm's Theorem \ref{ohm} (2), $M$ is contained in the finite union of the height-one prime ideals of $R$, so it is contained in one of them by prime avoidance. Hence $M$ has height one, so $R$ is one-dimensional.
	
	If $R$ is Pr\"ufer and semilocal, then $R$ has just finitely many height-one prime ideals. Hence, if $R$ is Archimedean, then $R$ is one-dimensional.
\end{proof}	  

\begin{remark}\label{genarch} 
	An integral domain $R$ is Archimedean if and only if for each nonzero nonunit $r$ of $R$ there is an Archimedean domain $D$ (depending on $r$) containing $R$ such that $r$ is a nonunit in $D$. 
	Moreover, replacing $D$ by $D\cap \Frac(R)$, we  may assume that $D$ is an overring of $R$.
	
	In particular, an intersection of Archimedean domains is Archimedean. Hence a locally Archimedean domain is Archimedean.	 
\end{remark}

\begin{corollary}\label{easyohm}
Let $A \subseteq B$ be an extension of domains. If every nonzero nonunit  of $A$ belongs to a height-one prime ideal of $B$, then $A$ is Archimedean.	 
\end{corollary}	  

\begin{proof} 
Let $a$ be a nonzero nonunit of $A$. If $Q$ is an height-one prime ideal of $B$ containing $a$, then $a$ is a nonunit in the one-dimensional (so Archimedean) domain $B_Q$. By Remark \ref{genarch}, $A$ is Archimedean. 
\end{proof}	  

\begin{corollary}\label{OneArch} 
	Let $(R,M)$ be a local domain. If some integral extension of $R$ has a height-one maximal ideal, then $R$ is Archimedean.	 
\end{corollary}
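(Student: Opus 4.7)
The plan is to deduce this directly from Corollary \ref{easyohm}. Let $S$ be an integral extension of $R$ having a height-one maximal ideal $N$. The first step is to observe that, since $R$ is local with maximal ideal $M$ and $S$ is integral over $R$, the maximal ideal $N$ of $S$ must contract to the unique maximal ideal $M$ of $R$ (by lying-over together with the fact that in an integral extension contractions of maximal ideals are maximal). Hence $M = N \cap R \subseteq N$.

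Next, let $a$ be any nonzero nonunit of $R$. Since $R$ is local, $a \in M \subseteq N$, so $a$ belongs to the height-one prime ideal $N$ of $S$. Thus every nonzero nonunit of $R$ belongs to a height-one prime ideal of $S$.

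Finally, Corollary \ref{easyohm} applied to the extension $R \subseteq S$ immediately yields that $R$ is Archimedean. There is no real obstacle here; the only thing to be careful about is the observation that in an integral extension a maximal ideal of the top ring contracts to the (unique, here) maximal ideal of the bottom ring, which justifies $M \subseteq N$.
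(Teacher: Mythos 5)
Your proof is correct and is essentially identical to the paper's: both note that a height-one maximal ideal of the integral extension contracts to $M$, so every nonunit of $R$ lies in a height-one prime of the extension, and then invoke Corollary \ref{easyohm}. No issues.
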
	  
\begin{proof} 
If $Q$ is a height-one maximal ideal of an integral extension $D$ of $R$, then $Q\cap R=M$. Hence $R$ is Archimedean by Corollary \ref{easyohm}. \end{proof} 

\begin{proposition}\label{archloc}
	Let $R$ be an integral domain, and let $a$ be a nonunit  of $R$ that belongs to just finitely many maximal ideals.
	
	Then 
	$\bigcap_{n\ge1}a^nR=(0)$ if and only if  $a$ belongs to a maximal ideal $M$ such that  $\bigcap_{n\ge1}a^nR_M=(0)$.
\end{proposition}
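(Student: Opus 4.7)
My plan is to split the equivalence into a trivial direction and a substantive one. The implication $(\la)$ is immediate: since $R$ is a domain, $R\hookrightarrow R_M$ and $a^nR\sub a^nR_M$ for every $n$, so $\bigcap_{n\ge1}a^nR\sub\bigcap_{n\ge1}a^nR_M=(0)$. I would dispose of this in a single line.

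For the nontrivial direction $(\ra)$ I plan to argue by contrapositive. Let $M_1,\dots,M_k$ be the finitely many maximal ideals of $R$ that contain $a$, and assume $\bigcap_{n\ge1}a^nR_{M_i}\ne(0)$ for every $i$. For each $i$ I will choose a nonzero element $c_i\in\bigcap_{n\ge1}a^nR_{M_i}$, clear the denominator by writing $c_i=b_i/s_i$ with $b_i\in R$ and $s_i\in R\sm M_i$, and observe that $b_i=s_ic_i$ still lies in $\bigcap_{n\ge1}a^nR_{M_i}$ (an ideal of $R_{M_i}$) and is nonzero in $R$.

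The key construction is then the product $b:=b_1b_2\cdots b_k\in R\sm\{0\}$, which I expect to lie in $a^nR_M$ for every maximal ideal $M$ of $R$ and every $n\ge1$. Indeed, for $M=M_j$ one writes $b=b_j\cdot\prod_{i\ne j}b_i$, and since $b_j\in\bigcap_n a^nR_{M_j}$ while $\prod_{i\ne j}b_i\in R\sub R_{M_j}$, the product belongs to $a^nR_{M_j}$; for a maximal ideal $N$ not containing $a$, the element $a$ is a unit in $R_N$, so $a^nR_N=R_N\ni b$ trivially.

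The final ingredient is the local-global principle for ideals: $b\in a^nR$ if and only if $b\in a^nR_M$ for every maximal ideal $M$, because the conductor $(a^nR:b)$ fails to be contained in any maximal ideal and therefore equals $R$. Applying this for each $n$ gives $b\in\bigcap_{n\ge1}a^nR\sm\{0\}$, contradicting the hypothesis. The main technical hurdle is the clearing-of-denominators and product step: the finiteness hypothesis on the maximal ideals containing $a$ is exactly what makes $b$ a finite product (hence nonzero in the domain $R$) and also what lets us handle the remaining maximal ideals for free since $a$ becomes a unit there. Without this finiteness, the construction would break down.
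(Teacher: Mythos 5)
Your proof is correct and rests on the same two ingredients as the paper's: the local--global identity $a^nR=\bigcap_M\bigl(a^nR_M\cap R\bigr)$ and the finiteness of the set of maximal ideals containing $a$. The paper phrases the finiteness step abstractly (a finite intersection of nonzero ideals of a domain is nonzero), while you make it explicit by multiplying the local witnesses $b_i$ together, which is just the standard proof of that abstract fact; so the two arguments are essentially identical.
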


\begin{proof}
	Let $\mathfrak F$ be the set of maximal ideals containing $a$.
	We have
	$$\bigcap_{n\ge1}a^nR=R\cap\bigcap_{n\ge1}\bigcap_{M\in\mathfrak F}a^nR_M=\bigcap_{M\in\mathfrak F}\big(R\cap\bigcap_{n\ge1}a^nR_M\bigr).$$
	Since the set $\mathfrak F$ is finite it follows that
	$\bigcap_{n\ge1}a^nR=(0)$ if and only if $R\cap ~\bigcap_{n\ge1}(a^nR_M)=(0)$ for some $M\in \mathfrak F$, equivalently 
$ \bigcap_{n\ge1}a^nR_M=(0)$ for some $M\in \mathfrak F$.
\end{proof}

\begin{proposition}  \label{archDVR} If $R$ is an Archimedean domain and $P$ is a principal prime ideal of $R$, then $R_P$ is a DVR.
\end{proposition}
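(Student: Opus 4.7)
The plan is to apply Lemma \ref{LargestPrime} twice, combined with the Archimedean hypothesis.

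First, since $P=pR$ is principal and nonzero, $P$ is invertible in $R$. By Lemma \ref{LargestPrime}, the intersection $\bigcap_{n\ge1}P^n=\bigcap_{n\ge1}p^nR$ is the largest prime ideal of $R$ properly contained in $P$. But the Archimedean hypothesis applied to the nonunit $p$ gives $\bigcap_{n\ge1}p^nR=(0)$. Hence $(0)$ is the largest prime strictly contained in $P$, so $P$ has height one and $R_P$ is a one-dimensional local domain with principal maximal ideal $pR_P$.

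Second, I would work inside $R_P$. The maximal ideal $pR_P$ is principal, hence invertible, so Lemma \ref{LargestPrime} applies again: $\bigcap_{n\ge1}(pR_P)^n$ is a prime ideal of $R_P$ properly contained in $pR_P$. Since $R_P$ is one-dimensional, this forces $\bigcap_{n\ge1}(pR_P)^n=(0)$.

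Finally, for any nonzero $x\in R_P$ there is a unique $n\ge0$ with $x\in (pR_P)^n\sm (pR_P)^{n+1}$, and writing $x=p^n u$ forces $u\notin pR_P$, so $u\in\U(R_P)$. Setting $\vv(x)=n$ (extended to $\Frac R$) defines a discrete valuation whose valuation ring is $R_P$, so $R_P$ is a DVR.

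The only delicate point is that one cannot simply pass $\bigcap_n p^nR=(0)$ through the localization at $P$, because infinite intersections need not commute with localization; the second application of Lemma \ref{LargestPrime} circumvents this by exploiting the one-dimensionality of $R_P$ established in the first step.
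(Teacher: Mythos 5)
Your proof is correct and follows essentially the same route as the paper: the paper likewise applies Lemma \ref{LargestPrime} together with the Archimedean hypothesis to conclude that $(0)$ is the largest prime properly contained in $P$, so that $R_P$ is a one-dimensional local domain with principal maximal ideal, and then invokes the standard fact that such a domain is a DVR. Your second and third paragraphs merely spell out the verification of that standard fact, which the paper leaves implicit.
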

\begin{proof}
If $P=rR$, then by \cite[Theorem 7.6 (a) and (c)]{gilmer}, $\bigcap_{n\geq 0}P^n= \bigcap_{n\geq 0}r^nR=(0)$ is the largest prime ideal of $R$ properly contained in $P$. It follows that $R_P$ is a one-dimensional local domain with principal maximal ideal, and so $R_P$ is a DVR.
\end{proof}

\begin{corollary}\label{archPID} Let $R$ be an integral domain.
\begin{enumerate}
\item
If $R$ is Archimedean with principal maximal ideals, then $R$ is a principal ideal domain.

\item
If $R$ is locally Archimedean with invertible maximal ideals, then $R$ is a Dedekind domain.
\end{enumerate}
\end{corollary}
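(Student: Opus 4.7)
My plan is to prove both parts by first showing that $R$ is locally a DVR, then invoking Cohen's theorem to upgrade to Noetherianity, and finally concluding via the standard characterizations of Dedekind and principal ideal domains.

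For part (1), I would dispose of the trivial case when $R$ is a field; otherwise fix a nonzero maximal ideal $M=rR$. Since $R$ is Archimedean, $\bigcap_{n\ge1}M^n=\bigcap_{n\ge1}r^nR=(0)$, and Lemma \ref{LargestPrime} identifies this intersection with the largest prime properly contained in the invertible prime $M$. Hence $\height M=1$, so $R$ is one-dimensional. Proposition \ref{archDVR} then gives that $R_M$ is a DVR for every maximal $M$. Every nonzero prime of $R$ is now maximal, hence principal by hypothesis and in particular finitely generated; by Cohen's theorem $R$ is Noetherian, and being locally a DVR it is Dedekind. Because all maximal ideals are principal, every nonzero ideal, being a product of maximal ideals, is itself principal, so $R$ is a PID.

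For part (2), I would pass to the localizations. At each maximal ideal $M$, invertibility of $M$ forces $MR_M$ to be principal in the local ring $R_M$, while the locally Archimedean hypothesis gives that $R_M$ is Archimedean. Applying part (1) (or directly Proposition \ref{archDVR} to the principal prime $MR_M$) shows that $R_M$ is a DVR. Hence $R$ is locally a DVR, so one-dimensional and integrally closed. As in the previous paragraph, every nonzero prime is maximal and invertible by hypothesis, hence finitely generated; Cohen's theorem yields Noetherianity, and a one-dimensional Noetherian integrally closed domain is a Dedekind domain.

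The only step that is not essentially formal is the appeal to Cohen's theorem to promote ``every nonzero prime ideal is finitely generated'' into Noetherianity; everything else is a direct application of results stated earlier in the excerpt together with standard facts about Dedekind domains. I do not anticipate any real obstacle.
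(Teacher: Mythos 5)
Your proof is correct and takes essentially the same route as the paper: both arguments hinge on Proposition \ref{archDVR} to conclude that $R$ is one-dimensional and locally a DVR. The only difference is in the endgame, where the paper cites \cite[Theorem 37.8 and Corollary 37.9]{gilmer} while you supply a self-contained justification via Cohen's theorem; that substitution is valid.
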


\begin{proof}
(1)
By Proposition \ref{archDVR}, $R$ is one-dimensional. Since every nonzero prime ideal of $R$ is principal, $R$ is a principal ideal domain by \cite[Corollary 37.9]{gilmer}.

(2)
By Proposition \ref{archDVR}, $R$ is locally a DVR (i.e., $R$ is almost Dedekind); in particular $R$ is one-dimensional. It follows that $R$ is a Dedekind domain by \cite[Theorem 37.8 $(1)\lra (4)$]{gilmer}.
\end{proof}

However, an Archimedean  domain $R$ with invertible maximal ideals is not necessarily one-dimensional, even if $R$ is  Pr\"ufer and stable: see Example \ref{ex3} below.

\begin{corollary}\label{bezout}
	An Archimedean B\'ezout domain $R$ with stable maximal ideals is  a principal ideal domain.
\end{corollary}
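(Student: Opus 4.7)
The plan is to reduce the statement to Corollary \ref{archPID} (1) by showing that every maximal ideal of $R$ is principal. Fix a maximal ideal $M$ and set $E = (M:M)$. Since $M$ is stable, $M$ is invertible as an ideal of $E$; hence if I can prove $E = R$, then $M$ becomes invertible in $R$, thus finitely generated, and finally principal because $R$ is B\'ezout.

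To establish $E = R$ I will argue via localizations. The key observation is that for any valuation domain $V$ with maximal ideal $\mathfrak m$ one has $(\mathfrak m : \mathfrak m) = V$: if $y \in \Frac V \sm V$ then $y^{-1}$ is a non-unit of $V$, so $y^{-1} \in \mathfrak m$, whence $1 = y\cdot y^{-1} \in y\mathfrak m$, contradicting $y\mathfrak m \sub \mathfrak m$. Now take $x \in E$ and any maximal ideal $N$ of $R$. Since $R$ is B\'ezout, $R_N$ is a valuation domain. If $N = M$, localizing $xM \sub M$ at $M$ yields $x \cdot MR_M \sub MR_M$, hence $x \in (MR_M : MR_M) = R_M$. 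If $N \neq M$, pick $m \in M \sm N$; then $m$ is a unit in $R_N$ and $xm \in M \sub R$, so $x = (xm)/m \in R_N$. Intersecting over all maximal ideals of $R$ gives $x \in R$, proving $E = R$.

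Consequently $M$ is invertible, hence finitely generated, hence principal in $R$. Since every maximal ideal of $R$ is principal and $R$ is Archimedean, Corollary \ref{archPID} (1) forces $R$ to be a principal ideal domain. The main obstacle, such as it is, lies in the innocuous-looking identity $(\mathfrak m : \mathfrak m) = V$ for valuation domains, which is what makes the overring $E$ collapse back to $R$; everything else is routine localization bookkeeping, and the Archimedean hypothesis is used only at the final invocation of Corollary \ref{archPID} (1).
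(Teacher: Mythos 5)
Your proof is correct and follows the same route as the paper: show each maximal ideal is invertible because $(M:M)=R$, conclude it is principal since $R$ is B\'ezout, and invoke Corollary \ref{archPID}(1). The only difference is that the paper cites the identity $(M:M)=R$ for a stable maximal ideal of a Pr\"ufer domain from Olberding's work, whereas you supply a direct localization argument for it; that argument is sound.
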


\begin{proof}
	As mentioned at the end of the proof of  \cite[Lemma 4.5]{O2}, a stable maximal ideal $M$ of a Pr\"ufer domain $R$ is invertible since \mbox{$(M:M)=R$}. Thus the maximal ideals of $R$ are finitely generated, so they are principal. Hence $R$ is a principal ideal domain by Corollary \ref{archPID}.
\end{proof}

None of the two conditions on the B\'ezout domain $R$ in Corollary \ref{bezout} to be a principal ideal domain can be omitted. Indeed, $R=\mathbb Z+X\mathbb Q[X]$ is a two-dimensional  B\'ezout domain with principal maximal ideals. On the other hand, the ring of entire functions is an   infinite-dimensional completely integrally closed  B\'ezout domain. Thus $R$ is Archimedean; see also Remark \ref{bezcic} below. Hence  $R$ has non-principal maximal ideals: these are the free maximal ideals: see \cite[Ch. VIII, \S 8.1]{fhp} and \cite[Ch.6, \S3]{Re}.

\begin{remark}\label{bezcic} 
By \cite[Corollary 3.1]{BB}, a GCD domain (in particular, a B\'ezout domain) is Archimedean if and only if it is completely integrally closed. 
\end{remark}	  

\begin{lemma}
	\label{I2xI}  \cite[Corollary 5.7]{OG}
	Let $R$ be a  finitely stable local domain. 
	Then a stable ideal $I$ of $R$ is principal in $(I:I)$. Moreover, if $I=x(I:I)$, then $I^2=xI$.
\end{lemma}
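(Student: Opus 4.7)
The plan: set $T := (I:I)$, so stability of $I$ means that $I$ is an invertible ideal of $T$. The proof splits into the two assertions.

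The second assertion is a one-line computation: if $I = xT$, then
\[
I^{2} = (xT)(xT) = x^{2}T = x(xT) = xI.
\]

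For the first assertion, the strategy is to show that $T$ is a semilocal domain with at most two maximal ideals, and then to invoke the standard fact that an invertible ideal in a semilocal domain is principal. In order: start from the invertibility identity $1 = \sum_{i=1}^{n} a_{i}b_{i}$ with $a_{i} \in I$ and $b_{i} \in (T:I)$, which also gives the presentation $I = Ta_{1} + \cdots + Ta_{n}$. Next, verify that every element of $T$ is integral over $R$, so $T \subseteq R'$; the plan here is to exhibit $T$ as a finitely generated $R$-module by exploiting both the identity $1 = \sum a_{i}b_{i}$ and the hypothesis that $R$ itself (not just $I$) is finitely stable. Then apply Theorem \ref{OlbCharFS}: since $R$ is local and finitely stable, $R'$ has at most two maximal ideals lying over the maximal ideal of $R$, hence $R'$ is semilocal with at most two maximal ideals; and since $R'$ is integral over the intermediate ring $T$, lying-over forces $T$ to have at most two maximal ideals as well. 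Finally, in the semilocal domain $T$ the invertible ideal $I$ is principal: using the Chinese Remainder Theorem inside $T$, select a single $x \in I$ whose image generates $I/N_{j}I$ for each maximal ideal $N_{j}$ of $T$; Nakayama's lemma applied at each localization $T_{N_{j}}$ then gives $I_{N_{j}} = xT_{N_{j}}$, and invertibility lifts this to the global equality $I = xT$.

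The main obstacle is establishing $T \subseteq R'$. The naive determinant-trick argument applied to the finitely generated $T$-module $I$ only produces a monic polynomial equation for $t \in T$ with coefficients in $T$ itself, which is vacuous. One must instead exploit the decomposition $1 = \sum a_{i}b_{i}$ together with the local finite-stability hypothesis on $R$ to conclude that $T$ is in fact a finitely generated $R$-module, at which point integrality over $R$ follows automatically. This finite-generation step is the technical core of \cite[Corollary 5.7]{OG} and is precisely where the assumption that $R$ (and not merely $I$) is finitely stable is genuinely used; once it is in hand, the semilocal reduction and the principal-from-invertible argument are both routine.
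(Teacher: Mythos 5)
The paper does not actually prove this lemma: it is quoted verbatim from Olberding (\cite[Corollary 5.7]{OG}), so there is no internal proof to compare against and your proposal must stand on its own. Your computation for the second assertion is correct, and your endgame is fine: an invertible ideal of a domain with at most two maximal ideals is principal by the CRT/Nakayama argument you sketch.

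The genuine gap is precisely the step you flag as the ``technical core'': the claim that $T=(I:I)$ is a finitely generated $R$-module, hence $T\subseteq R'$. This is false, so no amount of exploiting $1=\sum a_ib_i$ together with finite stability can establish it. Concretely, let $R=V$ be a strongly discrete valuation domain of rank $2$ (local, finitely stable, integrally closed, so $R'=V$) and let $I=P$ be its height-one prime. Then $(P:P)=V_P$ is a DVR with maximal ideal $PV_P=P$, so $P$ is a stable ideal and is indeed principal in $(P:P)$; but $(P:P)=V_P\supsetneq V=R'$, and $V_P$ is neither a finite $V$-module nor integral over $V$. Thus the containment $T\subseteq R'$, the finite generation of $T$ over $R$, and the lying-over argument from $R'$ down to $T$ all collapse. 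The correct route runs in the opposite direction: $T$ is an overring of $R$ that may strictly contain $R'$, and one passes to $TR'$, which is an overring of the semilocal Pr\"ufer domain $R'$ (hence Pr\"ufer with at most two maximal ideals, by Theorem \ref{OlbCharFS}) and is integral over $T$; lying over then shows $T$ itself has at most two maximal ideals, after which your principal-from-invertible argument applies. (Alternatively, one first makes $IR'T$ principal in $TR'$ and descends using the quadratic-extension property, which is the mechanism behind the paper's Lemma \ref{stableRR}.) As written, your proposal rests on a false intermediate statement and does not prove the lemma.
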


\begin{proposition}\label{larchd}
	Let $R$ be a finitely stable domain with stable maximal ideals. Then
	$R$ is locally Archimedean if and only if $\bigcap_{n\ge1} M^n=(0)$ for each maximal ideal $M$.
\end{proposition}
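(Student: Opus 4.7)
The plan is to reduce to the local case using Lemma \ref{radprin}. For each maximal ideal $M$, the localization $R_M$ is a finitely stable local domain with stable maximal ideal $MR_M$, since both finite stability and the stability of ideals localize. Lemma \ref{radprin} applied to $R_M$ then gives $\bigcap_{n\ge1}(MR_M)^n=\bigcap_{n\ge1}a^nR_M$ for any $a\in R_M$ with $\sqrt{aR_M}=MR_M$; combined with Corollary \ref{sqrt} applied inside $R_M$ (to pass from elements of $aR_M$ to all of $\sqrt{aR_M}=MR_M$, i.e.\ to every nonunit of $R_M$), this yields the local criterion: $R_M$ is Archimedean if and only if $\bigcap_{n\ge1}(MR_M)^n=(0)$, equivalently $\bigcap_{n\ge1}M^nR_M=(0)$.

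Having established the local criterion, the proposition reduces to showing, for each maximal $M$, that $\bigcap_n M^n=(0)$ in $R$ if and only if $\bigcap_n M^nR_M=(0)$ in $R_M$. The forward implication is routine: an element of $\bigcap_n M^n$ maps into $\bigcap_n M^nR_M=(0)$, so it is killed by some $s\notin M$ and is therefore zero since $R$ is a domain. For the reverse, I would use the observation that every power $M^n$ of the maximal ideal $M$ is $M$-primary; this holds because $R/M^n$ is a local ring with nilpotent maximal ideal $M/M^n$, so each element outside $M$ is a unit modulo $M^n$. This gives the contraction formula $M^nR_M\cap R=M^n$: if $y\in R$ and $sy\in M^n$ for some $s\notin M$, then $y\in M^n$ by primariness. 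Intersecting over $n$ yields $\bigcap_n M^nR_M\cap R=\bigcap_n M^n=(0)$, and clearing denominators (which are units in $R_M$) promotes this to $\bigcap_n M^nR_M=(0)$, whence $R_M$ is Archimedean by the local criterion.

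The main obstacle, or rather the key non-formal input, is the $M$-primariness of $M^n$; without it, one cannot transport the hypothesis $\bigcap_n M^n=(0)$ from $R$ into $R_M$. Once this is in place, both directions of the proposition follow formally by combining the local Archimedean criterion furnished by Lemma \ref{radprin} and Corollary \ref{sqrt} with the contraction formula $M^nR_M\cap R=M^n$.
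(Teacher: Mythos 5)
Your proposal is correct and follows essentially the same route as the paper: the local Archimedean criterion $\bigcap_{n\ge1}M^nR_M=(0)$ via Lemma \ref{radprin}, followed by the contraction identity $M^nR_M\cap R=M^n$ to pass between $R$ and $R_M$. The only difference is cosmetic: the paper uses that identity silently in the displayed equality $\bigcap_{n\ge1}M^n=\bigl(\bigcap_{n\ge1}M^nR_M\bigr)\cap R$, whereas you justify it explicitly by the $M$-primariness of $M^n$.
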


\begin{proof}
	Let $M$ be a maximal ideal of $R$. By Lemma \ref{I2xI}, there exists an element $m\in M$ such that $M^2=mM$.  Hence
$\bigcap_{n\ge0}M^nR_M = \bigcap_{n\ge0}m^nR_M$. It follows that $R_M$ is Archimedean if and only if $\bigcap_{n\ge0}M^nR_M =(0)$.	
We have	
	 	$$
	\bigcap_{n\ge1} M ^n=\bigcap_{n\ge1}(M^n R_M\cap R)=\left(\bigcap_{n\ge1}M^n R_M\right)\cap R,
	$$	 
	so $R_M$ is Archimedean if and only if $\bigcap_{n\ge1} M^n=(0)$. The proposition follows.
\end{proof}

\begin{proposition}\label{accpcond} 
	Let $R$ be an integral domain of finite character such that $\bigcap_{n\ge1}M^n=(0)$ for each maximal ideal $M$ of $R$. Then $R$ satisfies accp.	 
\end{proposition}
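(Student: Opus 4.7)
The plan is to argue by contradiction, using the characterization in the introduction that $R$ fails accp if and only if there is a strictly descending chain $a_1R\supsetneq a_2R\supsetneq\cdots$ of principal ideals with $\bigcap_n a_nR\ne(0)$. Assuming such a chain, I would pick $0\ne x\in\bigcap_n a_nR$ and aim to derive a contradiction by producing a single maximal ideal $M$ of $R$ with $x\in\bigcap_{n\ge1}M^n$. Writing $a_{n+1}=a_nc_n$ with $c_n\in R$, the strict descent forces each $c_n$ to be a nonzero nonunit; and since $x=a_{n+1}r_{n+1}\in c_nR$, every maximal ideal that contains some $c_n$ automatically contains $x$.

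The crucial step will use finite character. Since $x\ne 0$, the set $\mc{F}$ of maximal ideals of $R$ containing $x$ is finite, and by the observation above any maximal ideal $M_n$ containing the nonunit $c_n$ lies in $\mc{F}$. By the pigeonhole principle some fixed $M\in\mc{F}$ satisfies $c_n\in M$ for infinitely many indices $n_1<n_2<\cdots$. This is the point where finite character is essential: without it the ambient collection of maximal ideals involved could be infinite, and no single $M$ need absorb infinitely many of the $c_n$.

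To close the argument I would telescope: $a_{n_k+1}=a_1c_1c_2\cdots c_{n_k}$, and among these factors the $k$ elements $c_{n_1},\ldots,c_{n_k}$ all lie in $M$, so the product lies in $M^k$. Hence $x\in a_{n_k+1}R\sub M^k$ for every $k\ge1$, giving $x\in\bigcap_{k\ge1}M^k=(0)$ by hypothesis, contradicting $x\ne 0$. The only delicate point in the whole argument is the pigeonhole step, which is exactly where the finite character hypothesis does its work; the telescoping and the final invocation of $\bigcap_n M^n=(0)$ are then routine.
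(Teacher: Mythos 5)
Your proof is correct and follows essentially the same route as the paper's: the paper starts directly from a sequence of nonunits $r_n$ with $\bigcap_{n\ge1}\bigl(\prod_{i=1}^n r_iR\bigr)\ne(0)$, while you recover the same data by factoring a strictly descending chain as $a_{n+1}=a_nc_n$, and then both arguments use finite character plus pigeonhole to trap infinitely many of these nonunits in a single maximal ideal $M$ and conclude $0\ne x\in\bigcap_{k\ge1}M^k$. The two proofs are the same argument up to this cosmetic repackaging.
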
	  

\begin{proof} 
	Assume that $R$ does not satisfy accp. Then there exists an infinite   sequence of nonunits $r_n$ in $R$  such that $\bigcap_{n\ge1}(\prod_{i=1}^nr_iR)\ne(0)$. Let $c$ be an  element in this intersection.   For all $n\geq 1$,  each maximal ideal containing $r_n$ contains also $c$, since $c\in r_nR$. As $c$ belongs to just finitely many maximal ideals, there exists a maximal ideal $M$ containing $c$ such that $r_n\in M$ for infinitely many  $n$'s. Thus for each $n\ge1$, there exist integers $1~\le~i_1~<~i_2~<~\dots~<i_n$ such that
	$r_{i_k}\in M$ for all $1\le k\le n$. We have $c\in \prod_{j=1}^{i_n} r_jR\subseteq M^n$.
	Hence $c\in \bigcap_{n\ge1}M^n$, a contradiction.
\end{proof}

From Proposition \ref{accpcond} we obtain, by  using Proposition \ref{larchd}:

\begin{corollary}\label{stableaccp} 
A locally Archimedean finitely stable domain with stable maximal ideals and of finite character (in particular, a locally Archimedean stable domain) satisfies accp.
\end{corollary}

However a domain $R$ of finite character satisfying accp is not necessarily locally Archimedean, even if $R$ is stable (see Example \ref{ex3} below).

\section{An extension of Ohm's Theorem to finitely stable domains}

We extend Ohm's Theorem  from Pr\"ufer domains to finitely stable domains  (Theorem \ref{afsohm}). We present a criterion for the locally Archimedean property of a stable domain in  Proposition \ref{fsLocOhm}.
  As an application, we prove that a semilocal finitely stable Archimedean domain is locally Archimedean (Proposition \ref{semiarch}). 

\begin{proposition}\label{AB} 
	Let $A$ be a finitely stable domain,  let  $a$ be a nonzero element of $A$, and let $B$ be an integral extension overring of $A$. Then:
	$$
	\bigcap_{n\ge1}a^n A=(0)\lra \bigcap_{n\ge1}a^nB=(0).
	$$
	\end{proposition}

\begin{proof} 
Assume that $\bigcap_{n\ge1}a^nB\ne (0)$. Since $B$ is an overring of $A$, there exists a nonzero element $c$ in $ A\cap\bigcap_{n\ge1}a^nB$.
Thus for all $n\ge1$ we have $c=a^nb_n$, where $b_n\in B$. Since  by Theorem \ref{OlbCharFS2}, $B$ is a quadratic extension of $A$, we have for all $n$: $b_n^2\in b_nA+A$. It follows that  $a^nb_n^2\in A$. Hence $c^2=a^n(a^nb_n^2)\in a^nA$ for all $n\ge1$. Thus $\bigcap_{n\ge1}a^n A\ne(0)$, and the proposition follows.
\end{proof}	  

\begin{proposition}\label{archDN} 
	Let $(R,M)$ be a finitely stable local domain with stable maximal ideal, and let $D$ be an integral extension overring of $R$.
	
	Then $R$ is Archimedean if and only if $D$ has a maximal ideal  $N$ such that $D_N$ is Archimedean.
\end{proposition}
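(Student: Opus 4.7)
The plan is to prove each direction separately. The backward direction is routine: given a maximal ideal $N$ of $D$ with $D_N$ Archimedean and a nonzero nonunit $a\in R$, integrality of $D$ over the local ring $R$ forces every maximal ideal of $D$ to contract to $M$, so $a\in M\sub N$ and hence $a$ is a nonunit of $D_N$; the inclusion $a^nR\sub a^nD_N$ combined with $\bigcap_n a^nD_N=(0)$ then gives $\bigcap_n a^nR=(0)$.

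For the forward direction, assume $R$ is Archimedean, and proceed in three steps. First, apply Lemma \ref{radprin} to obtain $m\in M$ with $M=\sqrt{mR}$ and $\bigcap_n M^n=\bigcap_n m^nR$, which is $(0)$ by hypothesis; Corollary \ref{AB} then transfers this to $\bigcap_n m^n D=(0)$. Second, observe that $D$ has at most two maximal ideals: the integral closure $R'$ has at most two maximal ideals above $M$ by Theorem \ref{OlbCharFS}, and since $D\sub R'$ with both rings integral over $R$, each maximal ideal of $D$ is the contraction of a maximal ideal of $R'$ by lying over and incomparability. Third, Proposition \ref{archloc} applied to $m$ viewed as an element of $D$ produces a maximal ideal $N$ of $D$ such that $\bigcap_n m^n D_N=(0)$.

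The crux --- and the only nontrivial step --- is to deduce that this $D_N$ is Archimedean. For this I would establish the identity $\sqrt{mD_N}=ND_N$ and then close the argument with Lemma \ref{power}. If $P$ is a prime of $D$ containing $m$, then $P\cap R\supseteq\sqrt{mR}=M$, so $P\cap R=M$; by incomparability in the integral extension $R\sub D$, any two primes of $D$ contracting to the same prime of $R$ are either equal or incomparable, so $P$ cannot sit strictly below any maximal ideal of $D$ (which would also contract to $M$), and hence $P$ is itself maximal in $D$. Therefore $N$ is the only prime of $D$ contained in $N$ and containing $m$, so $\sqrt{mD_N}=ND_N$ as claimed. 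For any nonzero nonunit $b\in D_N$ this gives $b\in\sqrt{mD_N}$, so $b^k\in mD_N$ for some $k\ge 1$, and Lemma \ref{power} yields $\bigcap_n b^n D_N\sub\bigcap_n m^n D_N=(0)$. The main subtlety throughout is this structural observation that primes of $D$ containing $m$ are forced to be maximal, which depends crucially on $M$ being maximal in $R$ together with incomparability for the integral extension $R\sub D$.
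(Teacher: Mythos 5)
Your proof is correct and follows essentially the same route as the paper: both directions use the same element $m$ with $M=\sqrt{mR}$ (equivalently $M^2=mM$ from Lemma \ref{I2xI}), Corollary \ref{AB} to pass to $D$, the bound of two maximal ideals from Theorem \ref{OlbCharFS} together with Proposition \ref{archloc} to locate $N$, and the observation that primes of $D$ containing $M$ are maximal to get $ND_N=\sqrt{mD_N}$ before closing with Lemma \ref{power}/Corollary \ref{sqrt}. The only cosmetic difference is that you invoke Lemma \ref{radprin} and Lemma \ref{power} directly where the paper cites Lemma \ref{I2xI} and Corollary \ref{sqrt}.
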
	  

\begin{proof} 
	Assume that $R$ is Archimedean. By  Lemma \ref{I2xI}  we have $M^2=mM$ for some element  $m\in M$.
	 By Proposition \ref{AB}, $\bigcap_{n\ge1}m^nD=(0)$. By Theorem \ref{OlbCharFS2}, $D$ has at most two maximal ideals.	 By Proposition \ref{archloc},  there exists a maximal ideal $N$ of $D$ such that $\bigcap_{n\ge1}m^nD_N=(0)$. Since $D$ is an integral extension of $R$ and $R$ is local, it follows that a prime ideal of $D$ contains $M$ if and only if it is  a maximal ideal of $D$. Hence the only prime ideal of $D_N$ containing $MD_N$ is $ND_N$, so
	$ND_N=\sqrt {mD_N}$. If $x\in ND_N$, then $x^k\in mD_N$ for some integer $k\ge1$. 
	Thus $\bigcap_{n=1}x^nD_N\subseteq \bigcap_{n=1}(x^k)^nD_N\sub \bigcap_{n=1}m^nD_N =(0)$, so  $D_N$ is Archimedean.
	
Conversely, if $D_N$ is Archimedean, then $R$ is Archimedean by
Remark \ref{genarch} since $R\subseteq D_N$ and $N\cap R=M$. 
	\end{proof}	

\begin{proposition}\label{fslArch} 
	Let $(R,M)$ be a   local domain. 
	\begin{enumerate} 
		\item 
		If some integral extension of $R$ has a height-one   maximal ideal, then $R$ is Archimedean.   
		\item 
		Conversely, we have:
\begin{enumerate} 
\item 
If $R$ is Archimedean and finitely stable, then  $R'$  has a height-one maximal ideal. 

\item
If $R$ is Archimedean, finitely stable and the ideal $M$ is stable, then  $T$ has a height-one maximal ideal ($T$ is defined in  Construction \ref{olbcons}).
\end{enumerate}
\end{enumerate}	 
\end{proposition}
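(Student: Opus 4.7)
Part (1) is immediate from Corollary~\ref{OneArch}.

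For part (2)(b), the plan is to apply Corollary~\ref{fsarch1dim} with $D=T$. By Theorem~\ref{Olb}(c) the maximal ideals of $T$ are principal, so whenever $N$ is a maximal ideal of $T$ with $T_N$ Archimedean, the principal maximal ideal $NT_N$ is a principal prime of the Archimedean domain $T_N$, and Proposition~\ref{archDVR} forces $T_N$ to be a DVR, in particular one-dimensional. This verifies the hypothesis of Corollary~\ref{fsarch1dim}, and since $R$ is Archimedean that corollary produces a height-one maximal ideal of $T$.

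For part (2)(a), since $M$ is not assumed stable, Corollary~\ref{fsarch1dim} is not available, and I instead combine Ohm's Theorem with incomparability for integral extensions. By Theorem~\ref{OlbCharFS}, $R'$ is Pr\"ufer with at most two maximal ideals $N_1,N_2$, both lying over $M$. Since each $R'_{N_i}$ is a valuation domain, the primes of $R'$ contained in $N_i$ form a chain, so there is at most one height-one prime $Q_i$ of $R'$ inside $N_i$, giving at most two height-one primes in $R'$ in total.

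Suppose for contradiction that no maximal ideal of $R'$ has height one, so each $Q_i$ (when it exists) satisfies $Q_i\subsetneq N_i$. For any nonzero nonunit $a\in R$, the Archimedean hypothesis gives $\bigcap_n a^nR=(0)$, and Corollary~\ref{AB} yields $\bigcap_n a^nR'=(0)$; since $a$ belongs to only the finitely many maximal ideals of $R'$, Ohm's Theorem~\ref{ohm}(1) forces $a$ into a height-one prime of $R'$. Hence every nonzero nonunit of $R$ lies in $Q_1\cup Q_2$, so $M\subseteq Q_1\cup Q_2$ and therefore $MR'\subseteq Q_1\cup Q_2$. Prime avoidance yields $MR'\subseteq Q_i$ for some $i$, whence $Q_i\cap R=M=N_i\cap R$ although $Q_i\subsetneq N_i$; this contradicts incomparability of primes of $R'$ lying over a common prime of $R$ in the integral extension $R\subseteq R'$.

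The main obstacle is recognizing that (a) does not reduce to (b)---$M$ need not be stable, so Proposition~\ref{archDN} and Corollary~\ref{fsarch1dim} do not apply---and then assembling Ohm's Theorem, the Pr\"ufer semilocal structure of $R'$, and incomparability into the prime-avoidance contradiction above.
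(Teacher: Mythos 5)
Your proof is correct and follows essentially the same route as the paper: part (1) is quoted identically, part (2)(b) is the paper's argument (Proposition~\ref{archDN} plus Proposition~\ref{archDVR}, merely routed through Corollary~\ref{fsarch1dim}), and part (2)(a) uses the same ingredients --- at most two height-one primes of the Pr\"ufer semilocal $R'$, Corollary~\ref{AB}, Ohm's Theorem, and prime avoidance --- with the paper simply concluding directly that a prime of $R'$ contracting to the maximal ideal $M$ must be maximal, rather than arguing by contradiction via incomparability. One small repair is needed in (2)(a): the inference from $M\subseteq Q_1\cup Q_2$ to $MR'\subseteq Q_1\cup Q_2$ is not valid in general, since the ideal generated by a subset of a union of primes can escape that union; instead apply prime avoidance directly to the additive subgroup $M$ inside $Q_1\cup Q_2$ (avoidance for two primes needs only additive closure), or equivalently to $M\subseteq (Q_1\cap R)\cup(Q_2\cap R)$ in $R$, which gives $M\subseteq Q_i$ for some $i$, after which the rest of your argument goes through unchanged.
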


\begin{proof} 
(1)  is Corollary \ref{OneArch}.
 
 (2, a)
	By Theorem \ref{OlbCharFS}, $R'$ has at most two maximal ideals. Since $R'$ is Pr\"ufer, $R'$ has at most two height-one prime ideals: $Q_1$ and $Q_2$ (not necessarily distinct). Let $P_i=Q_i\cap R$, $i=1, 2$. Since $R$ is Archimedean, by Proposition \ref{AB} we have $\bigcap_{n\ge1}a^nR'=(0)$ for all $a\in R$. By Theorem \ref{ohm}, $M \subseteq P_1\cup P_2$. We may assume that $M \subseteq P_1$, so $M=P_1=Q_1\cap R$. Hence $Q_1$ is a height-one  maximal ideal of $R'$.
	
(2, b) 
By Proposition \ref{archDN}, $T$ has a maximal ideal  $N$   such that the domain $T_N$ is Archimedean. Hence $T_N$ is  a DVR by Proposition \ref{archDVR} as $N$ is a principal ideal by Theorem \ref{Olb2} (c). Thus  $N$ is a height-one maximal ideal of $T$.
\end{proof}	

In the notation of \ref{olbcons}, if $R_k$ is one-dimensional for some $k\ge0$, then all the rings $R_n$, as well as $T$, are one-dimensional since $T$ is an integral extension of $R_n$, for all $n\ge0$. For the Archimedean property we have:

\begin{corollary}\label{alln} 
	Let $(R,M)$ be a finitely stable local domain with stable maximal ideal. Set $R_{\infty}=T=\bigcup_{n\geq 0}R_n$ (see Construction \ref{olbcons}). Assume that  $R_k$  is Archimedean for some $0\le k\le\infty$. Then $R_n$ is Archimedean for each $n$ such that $R_n$ is local. Thus $R_n$ is Archimedean at least for each $R_n\ne T$. 
\end{corollary}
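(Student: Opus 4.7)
My plan is to use Proposition~\ref{archDN} as a two-way bridge between each local $R_n$ and $T$, so that the Archimedean property at a single well-chosen maximal ideal of $T$ propagates to every local $R_n$. First I would record the setup: by Theorem~\ref{Olb}(1), every local $R_n$ is finitely stable with stable maximal ideal, and by Theorem~\ref{Olb}(2), $T$ is an integral extension overring of $R$, hence of every $R_n$. Thus Proposition~\ref{archDN} applies to the pair $(R_n, T)$ whenever $R_n$ is local, and the key reduction is to produce a single maximal ideal $N$ of $T$ with $T_N$ Archimedean.

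To produce $N$, I would split on the hypothesis that some $R_k$ is Archimedean. If $R_k$ is local with $k$ finite, then Proposition~\ref{archDN} applied to $(R_k, T)$ in the forward direction immediately yields such an $N$. If $k = \infty$ and $T$ is local (Construction~\ref{olbcons}(b)), then the unique maximal ideal $N$ of $T$ satisfies $T_N = T$, which is Archimedean by hypothesis. The remaining possibility is that $R_k$ is not local; by Construction~\ref{olbcons}(a) this forces $T = R_k$, so $T$ itself is Archimedean. Since the maximal ideals of $T$ are principal by Theorem~\ref{Olb}(c), Corollary~\ref{archPID}(1) gives that $T$ is a principal ideal domain, so $T_N$ is a DVR and in particular Archimedean for every maximal ideal $N$.

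With such an $N$ in hand, I would conclude by the backward direction of Proposition~\ref{archDN} applied to $(R_n, T)$ for each local $R_n$ with $n$ finite; and when $T$ is local (Construction~\ref{olbcons}(b)), the equation $T = T_N$ handles $T$ itself. The closing assertion ``$R_n$ is Archimedean at least for each $R_n \ne T$'' is automatic from the construction, since $R_n \ne T$ forces $R_n$ to be local. I do not anticipate a serious obstacle: the only subtle point is recognizing that the Archimedean hypothesis on $T$ in the non-local case must be routed through Corollary~\ref{archPID}(1) rather than directly through Proposition~\ref{archDN}, which requires the base ring to be a finitely stable local domain with stable maximal ideal---a property we cannot readily assert for $T$ itself in that case.
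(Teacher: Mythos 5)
Your proof is correct, but it routes through a different bridge than the paper does. The paper's own argument is a one-liner: since $(R_n)'=R'$ for all $0\le n\le\infty$, the Archimedean hypothesis on $R_k$ forces $R'$ to have a height-one maximal ideal by Proposition~\ref{fslArch}(2)(a), and then every local $R_n$, having $R'$ as an integral extension with a height-one maximal ideal, is Archimedean by Proposition~\ref{fslArch}(1) (i.e.\ Corollary~\ref{OneArch}). So the paper's invariant passed between the $R_n$ is ``$R'$ has a height-one maximal ideal,'' whereas yours is ``$T$ has a maximal ideal $N$ with $T_N$ Archimedean,'' transported via Proposition~\ref{archDN}. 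The two are closely related (Proposition~\ref{fslArch}(2)(b) is itself proved from Proposition~\ref{archDN}), but your version costs you a three-way case split and the extra appeal to Theorem~\ref{Olb}(c) and Corollary~\ref{archPID}(1) when $R_k$ is not local. What that extra work buys is worth noting: Proposition~\ref{fslArch} is stated only for local domains, so the paper's one-line citation silently skips the case where the Archimedean $R_k$ fails to be local (forcing $T=R_k$); your argument covers that case explicitly and correctly, by upgrading the Archimedean semilocal $T$ with principal maximal ideals to a PID. All the hypotheses you invoke (that each local $R_n$ is finitely stable with stable maximal ideal, and that $T$ is an integral extension overring of each $R_n$) are indeed supplied by Theorem~\ref{Olb}, and your closing observation that $R_n\ne T$ forces $R_n$ local is exactly right.
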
	  

\begin{proof} 
	For all $0\le n\le\infty$ we have $(R_n)'=R'$, so the corollary follows from Proposition \ref{fslArch}.	  
\end{proof}	  

Corollary \ref{alln} might fail when $T$ is not local, so $T=R_n$ for some integer $n$. Indeed, in Example \ref{ex6}, $R$ is a stable local Archimedean domain, but $T=R'=R_1$ is not Archimedean. See also Proposition \ref{Tarch} below.

\begin{proposition}\label{fsLocOhm} 
	Let $R$ be a finitely stable  domain. The following  conditions are equivalent:
	\begin{enumerate} 
		\item[\rm(i)] 
		$R$ is locally Archimedean; 
		
		\item[\rm(ii)] 
		Each  maximal ideal of  $R$ is contained in a height-one prime ideal of $R'$ (which is necessarily maximal);
	\end{enumerate}
\end{proposition}

\begin{proof} 
(i) $\ra$ (ii)
If $R$ is local, then  (ii) follows from Proposition \ref{fslArch}(2)(a).

In the general case, let $M$ be a maximal ideal of $R$. By the local case, the ideal $MR_M$ of $R_M$ is contained in a height-one prime $Q$ of $(R_M)'=R'_M$, where $R'_M$ is the localization of $R'$ at the multiplicative subset $R\setminus M$. Thus $Q\cap R'$ is a height-one prime ideal of $R'$ containing $M$.	 

(ii) $\ra$ (i)
Let $M$ be a maximal ideal of $R$. Let $Q$ be a height-one prime ideal of $R'$ containing $M$. Thus $QR'_M$ is a height-one prime ideal of  $R'_M=(R_M)'$ containing $M$. By Corollary \ref{OneArch}, $R_M$ is Archimedean, so $R$ is locally Archimedean.
\end{proof}

\begin{proposition}\label{Tarch} 
Let $R$ be a finitely stable local domain with stable maximal ideal. In the notation of \ref{olbcons}, $T$ is Archimedean if and only if $R$ is one-dimensional. 	\end{proposition}	  

\begin{proof} 
If $R$ is one-dimensional, then $T$ is one-dimensional, and so Archi\-me\-dean, since $T$ is an integral extension of $R$.
Conversely, if $T$ is Archimedean, then $T$, and so also $R$, is one-dimensional by Corollary \ref{archPID},
as the maximal ideals of $T$ are principal. 
\end{proof}	  

\begin{corollary}\label{corTarch}  \cite[Theorem 2.8]{GR1} Let $R$ be a finitely stable local domain with stable maximal ideal. If $R$ is Archimedean and $T$ is local, then $R$ is one-dimensional.
\end{corollary}
\begin{proof} 
If $T$ is local, then $T$ is Archimedean by Corollary \ref{alln} and so $R$ is one-dimensional by Proposition \ref{Tarch}.
\end{proof}

We now state the promised generalization of Ohm's Theorem \ref{ohm}. 

\begin{theorem}\label{afsohm}
	Let $R$ be a finitely stable  domain, and let $a$ be a nonzero nonunit of $R$ belonging to just finitely many maximal ideals of $R$. The following conditions are equivalent:
	\begin{enumerate} 
		\item[\rm(i)] 
		$\bigcap_{n\ge1}a^nR=(0)$;
		
		\item[\rm(ii)]  
		$a$ belongs to a height-one prime ideal of $R'$;
		
		\item[\rm(iii)] 
		$a$ belongs to a  prime ideal $P$ of $R$ such that the domain $R_P$ is Archimedean.
	\end{enumerate} 
\end{theorem}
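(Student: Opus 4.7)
The plan is to establish a cycle (i) $\Rightarrow$ (ii) $\Rightarrow$ (iii) $\Rightarrow$ (i), with the first implication actually packaged as the equivalence (i) $\Leftrightarrow$ (ii) by reducing to Ohm's Theorem applied to $R'$. Since $R$ is finitely stable, $R'$ is an integral extension overring of $R$, so Corollary \ref{AB} gives $\bigcap_{n\ge1}a^n R = (0)$ if and only if $\bigcap_{n\ge1}a^n R' = (0)$. By Theorem \ref{OlbCharFS}, $R'$ is Pr\"ufer, and since $R'$ is integral over $R$, every maximal ideal of $R'$ contracts to a maximal ideal of $R$, with at most two lying over each; hence $a$ belongs to just finitely many maximal ideals of $R'$. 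Ohm's Theorem \ref{ohm}(1), applied to the Pr\"ufer domain $R'$, then yields (i) $\Leftrightarrow$ (ii).

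For (ii) $\Rightarrow$ (iii), I would let $Q$ be a height-one prime of $R'$ containing $a$, set $P := Q\cap R$, and note that $a\in P$. I would then show $R_P$ is Archimedean by applying Corollary \ref{easyohm} to the extension $R_P \subseteq R'_Q$; the inclusion is legitimate since $R\setminus P = R\setminus(Q\cap R) \subseteq R'\setminus Q$. The key observation is that any nonzero nonunit $c/s$ of $R_P$ (with $c\in P$ nonzero and $s\in R\setminus P$) satisfies $c\in Q$ and $s\notin Q$, so $c/s$ is a nonunit of $R'_Q$ and therefore lies in the unique, height-one maximal ideal $QR'_Q$ of the rank-one valuation domain $R'_Q$. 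Corollary \ref{easyohm} then delivers the Archimedean property of $R_P$.

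The implication (iii) $\Rightarrow$ (i) is immediate: since $R_P$ is Archimedean and $a$ is a nonzero nonunit of $R_P$, we have $\bigcap_{n\ge1}a^n R_P = (0)$, and intersecting with $R$ yields $\bigcap_{n\ge1}a^n R = (0)$. The main obstacle is the step (ii) $\Rightarrow$ (iii), where I must identify $R_P$-nonunits with $R'_Q$-nonunits via the contraction $P=Q\cap R$; once this is in place, Corollary \ref{easyohm} does the rest. Note that the hypothesis that $a$ lies in only finitely many maximal ideals of $R$ is used solely in the direction (i) $\Rightarrow$ (ii), to license the application of Ohm's Theorem inside $R'$.
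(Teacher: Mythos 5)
Your proposal is correct and follows essentially the same route as the paper: Corollary \ref{AB} to pass to $R'$, the lying-over/at-most-two-maximal-ideals count to apply Ohm's Theorem \ref{ohm} for (i) $\Leftrightarrow$ (ii), Corollary \ref{easyohm} with $A=R_P$, $B=R'_Q$ for (ii) $\Rightarrow$ (iii), and the trivial localization argument for (iii) $\Rightarrow$ (i). The extra details you supply (the inclusion $R_P\subseteq R'_Q$ and the identification of nonunits) are exactly the ones the paper leaves implicit.
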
	  	  

\begin{proof} 
	(i) $\ra$ (ii)
	By Proposition \ref{AB},  $\bigcap_{n\ge1} a^nR'=(0)$. If $N$ is a maximal ideal of $R'$ containing $a$, then $N\cap R$ is  a  maximal ideal of $R$ containing $a$. Since each maximal ideal of $R$ is contained in at most two  maximal ideals of $R'$ (Theorem \ref{OlbCharFS}), it  follows that $a$ belongs to just finitely many maximal ideals of $R'$. Since $R'$ is Pr\"ufer,   $a$ belongs to a height-one prime ideal of $R'$ (Theorem \ref{ohm}).	 
	
	(ii) $\ra$ (iii)
Let $Q$ be a height-one prime ideal of $R'$ containing $a$, and let $P=Q\cap R$.
By Corollary \ref{easyohm} for $A=R_P$ and $B=R'_Q$, we obtain that $R_P$ is Archimedean.
	
	(iii) $\ra$ (i)
follows from Remark \ref{genarch}.
\end{proof}	  

\begin{corollary}\label{fsohm} 
Let  $R$ be a finitely stable domain of finite character (this holds, in particular, if $R$ is a stable domain). Then $R$ is Archimedean if and only if  every nonzero nonunit in $ R$ belongs to a height-one prime ideal of $R'$.
\end{corollary}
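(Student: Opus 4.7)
The plan is to deduce the corollary directly from Theorem \ref{afsohm}. The key observation is that the finite character hypothesis on $R$ is exactly what allows the element-by-element equivalence in the theorem to be applied uniformly to every nonzero nonunit of $R$.

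Concretely, by definition $R$ is Archimedean if and only if $\bigcap_{n\ge1} a^n R = (0)$ for every nonzero nonunit $a\in R$. Since $R$ has finite character, each nonzero $a\in R$ lies in just finitely many maximal ideals of $R$, so the hypothesis of Theorem \ref{afsohm} is satisfied for every nonzero nonunit $a$. Applying the equivalence (i)$\lra$(ii) of that theorem to each such $a$, the condition $\bigcap_{n\ge1}a^nR=(0)$ becomes equivalent to $a$ belonging to a height-one prime ideal of $R'$. Quantifying over all nonzero nonunits $a\in R$, we obtain that $R$ is Archimedean if and only if every nonzero nonunit of $R$ belongs to a height-one prime ideal of $R'$.

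Finally, for the parenthetical remark, it suffices to recall from the introduction that a stable domain is automatically of finite character (by \cite[Theorem 3.3]{O2}, a domain is stable if and only if it is locally stable and has finite character). Hence the stable case is a special instance of the finitely stable, finite-character case already treated. The only step that requires any thought is checking that the finite character hypothesis gives exactly the ``finitely many maximal ideals'' hypothesis needed by Theorem \ref{afsohm}, and this is immediate from the definition; consequently, the proof is essentially a one-line reduction.
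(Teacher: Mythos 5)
Your proof is correct and is exactly the reduction the paper intends: the corollary is stated without proof precisely because, under finite character, every nonzero nonunit satisfies the hypothesis of Theorem \ref{afsohm}, and the equivalence (i)$\lra$(ii) quantified over all such elements gives the statement, with the stable case covered by the fact that stable domains have finite character. No gaps.
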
	  

In the next proposition we extend Corollary \ref{prufsemi} to finitely stable domains:

\begin{proposition}\label{semiarch}
	An Archimedean finitely stable domain  of finite character such that its integral closure has just finitely many height-one prime ideals is locally Archimedean.
	In particular, an Archimedean finitely stable semilocal domain is locally Archimedean.	 
\end{proposition}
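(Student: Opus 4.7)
The plan is to invoke Proposition~\ref{fsLocOhm}, which says that a finitely stable domain is locally Archimedean precisely when each of its maximal ideals is contained in a height-one prime ideal of its integral closure. So my task reduces to producing, for every maximal ideal $M$ of $R$, a height-one prime of $R'$ lying above $M$.

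Let $Q_1,\dots,Q_k$ be the finitely many height-one prime ideals of $R'$, and set $P_i=Q_i\cap R$ for $1\le i\le k$. Since $R$ is Archimedean, finitely stable, and of finite character, Corollary~\ref{fsohm} guarantees that every nonzero nonunit of $R$ lies in at least one $Q_i$, and hence in at least one $P_i$. Therefore every maximal ideal $M$ of $R$ is contained in the finite union $\bigcup_{i=1}^k P_i$, and prime avoidance forces $M\subseteq P_i$ for some $i$. By maximality, $M=P_i$, so $M\subseteq Q_i$, a height-one prime of $R'$. Proposition~\ref{fsLocOhm} then yields that $R$ is locally Archimedean.

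For the semilocal addendum, I would show that the hypotheses of the first assertion are automatic when $R$ is semilocal. Finite character is trivial. By Theorem~\ref{OlbCharFS}, each maximal ideal of $R$ has at most two maximal ideals of $R'$ lying over it, so $R'$ is itself semilocal; and since $R'$ is Pr\"ufer, each of its localizations at a maximal ideal is a valuation domain whose prime spectrum is totally ordered, so every maximal ideal of $R'$ contains at most one height-one prime. Hence $R'$ has only finitely many height-one primes, and the general statement applies. The only delicate step in the whole argument is the prime-avoidance step, which is secured by the hypothesis of finitely many height-one primes in $R'$; everything else is a direct appeal to results proved earlier in the paper.
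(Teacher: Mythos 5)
Your proof is correct and follows essentially the same route as the paper: apply the Ohm-type result (Corollary~\ref{fsohm}/Theorem~\ref{afsohm}) to place each maximal ideal of $R$ inside the finite union of height-one primes of $R'$, use prime avoidance to land in a single one, and conclude via Proposition~\ref{fsLocOhm}; the semilocal case is likewise reduced by noting $R'$ is Pr\"ufer and semilocal. Your extra justification that a semilocal Pr\"ufer domain has only finitely many height-one primes is a welcome, if routine, elaboration of what the paper leaves implicit.
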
	  

\begin{proof} 
Let $M$ be a maximal ideal of $R$. As $R$ is Archimedean, by Theorem \ref{afsohm},  $M$ is contained in the finite union of the height-one primes of $R'$. Thus the ideal $MR'$ of $R'$ is contained in one of these primes. By Proposition \ref{fsLocOhm},  $R$ is locally Archimedean.
	
	If $R$ is an Archimedean  finitely stable semilocal domain, then $R'$ is Pr\"ufer and semilocal. Thus $R'$ has just finitely many height-one prime ideals. It follows that $R$ is locally Archimedean.
\end{proof}	  

In connection with Proposition \ref{semiarch}, 
by Example \ref{ex3},  a stable Archi\-me\-dean  domain need not be locally Archimedean, and by Example \ref{ex2} a semilocal Archimedean (even completely integrally closed) domain need not be locally Archimedean.

\begin{question} 
By Proposition \ref{fsLocOhm}, if a finitely stable  domain $R$ is  locally Archimedean, then each nonzero nonunit of $R$ belongs to a height-one maximal ideal of $R'$. Is the converse true? Cf. Corollary \ref{fsohm}.	 
\end{question}	  

\section{One-dimensionality of Archimedean stable domains}

In \cite[Theorem 4.8]{GR1}, we proved that a finitely stable Mori domain is one-dimensional.
 In this section, we illustrate a general method for constructing local  Archimedean stable domains  of any dimension (Propositions \ref{V1V2} and  \ref{intV1V2}); see also Example \ref{ex6} below. 

First we state a criterion for one-dimensionality of an Archimedean  stable domain. We say that a domain $R$ is \emph{equidimensional} if $\dim R=\dim R_M$, for each maximal ideal $M$. 

\begin{proposition}\label{equidim} 
	Let $R$ be an Archimedean finitely stable  domain of finite character (this includes the case that $R$ is  Archimedean and stable). The following  conditions are equivalent:
	\begin{enumerate} 
		\item[(i)] 
		$R$ is one-dimensional; 
		
		\item[(ii)] 
		Every integral extension of $R$ is equidimensional;
		
		\item[(iii)] 
		$R'$ is equidimensional;
		
		\item[(iv)]
		The pair $(R,R')$ satisfies GD (the going down property) and $R$  is equidimensional.
	\end{enumerate} 
	\end{proposition}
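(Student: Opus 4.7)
My plan is to prove the equivalences through the cycle $(i)\Rightarrow(ii)\Rightarrow(iii)\Rightarrow(i)$, handling $(i)\Leftrightarrow(iv)$ separately. The implications $(i)\Rightarrow(ii)$ and $(ii)\Rightarrow(iii)$ are immediate: integrality preserves Krull dimension, so any integral extension of a one-dimensional domain is one-dimensional and hence equidimensional, and $R'$ itself is an integral extension of $R$. Similarly, $(i)\Rightarrow(iv)$ is clear (dimension one gives equidimensionality automatically and makes going down vacuous), while $(iv)\Rightarrow(iii)$ follows from the standard fact that going down combined with integrality forces $\height(Q)=\height(Q\cap R)$ for every prime $Q$ of $R'$, so that equidimensionality of $R$ passes to $R'$. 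This reduces all of the remaining work to $(iii)\Rightarrow(i)$.

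For $(iii)\Rightarrow(i)$, I would first show that $R$ itself is equidimensional of the common dimension $d=\dim R'$. For any maximal ideal $M$ of $R$, Theorem \ref{OlbCharFS} provides at most two maximal ideals $N_1,N_2$ of $R'$ lying over $M$, each of height $d$ by hypothesis; since every prime of $R'$ contained in some $N_i$ contracts into $M$ and therefore survives localization at $R\setminus M$, the Pr\"ufer semilocal overring $(R_M)'=(R')_M$ has maximal ideals $N_i(R')_M$ of height $d$. Integrality then gives $\height(M)=\dim R_M=\dim(R_M)'=d$.

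Next I would assume for contradiction that $d\geq 2$ and analyze the height-one primes locally. In each valuation ring $R'_{N_i}$ the primes are linearly ordered, so $N_i$ contains a unique height-one prime $P_i$ of $R'$ with $P_i\subsetneq N_i$; its contraction $P_i\cap R$ must be strictly contained in $M$, for otherwise lying over would force $P_i$ to be maximal in $R'$. The only height-one primes of $(R_M)'$ are then $P_1(R')_M$ and $P_2(R')_M$. My plan is to reach a contradiction by producing, for some well-chosen maximal ideal $M_a$, genuine Archimedeanness of $R_{M_a}$ and then invoking Proposition \ref{fslArch} (2, a) to exhibit a height-one maximal ideal of $(R_{M_a})'$, which is incompatible with all its maximal ideals having height $d\geq 2$.

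The main technical obstacle is precisely this local-Archimedeanness step: the Archimedean property does not in general localize (cf.\ Example \ref{ex3}), so passing from $R$ Archimedean to $R_{M_a}$ Archimedean must genuinely exploit equidimensionality. My strategy combines Proposition \ref{archloc} --- which, since $R$ is Archimedean of finite character, produces for each nonzero nonunit $a\in R$ a maximal ideal $M_a\ni a$ satisfying $\bigcap_n a^n R_{M_a}=(0)$ --- with Corollary \ref{fsohm} applied inside $R_{M_a}$ to bootstrap from ``Archimedean at $a$'' to full Archimedeanness. Once $R_{M_a}$ is Archimedean, prime avoidance applied to the at most two height-one primes of $(R_{M_a})'$ forces $M_a R_{M_a}\subseteq P_i(R')_{M_a}$ for some $i$, whence on contraction $M_a\subseteq P_i\cap R\subsetneq M_a$, the desired contradiction.
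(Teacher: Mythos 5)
Your treatment of the easy implications matches the paper's: (i)$\Rightarrow$(ii)$\Rightarrow$(iii) and (i)$\Rightarrow$(iv) are as you say, and your argument for (iv)$\Rightarrow$(iii) (GD plus integrality gives $\height Q=\height(Q\cap R)$, so equidimensionality ascends) is exactly the paper's. The whole content is therefore (iii)$\Rightarrow$(i), and here your proof has a genuine gap at precisely the point you flag as ``the main technical obstacle.'' You propose to obtain, from Proposition \ref{archloc}, a maximal ideal $M_a\ni a$ with $\bigcap_n a^nR_{M_a}=(0)$, and then to ``bootstrap'' via Corollary \ref{fsohm} applied inside $R_{M_a}$ to conclude that $R_{M_a}$ is Archimedean. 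But Corollary \ref{fsohm} characterizes Archimedeanness of $R_{M_a}$ by the condition that \emph{every} nonzero nonunit of $R_{M_a}$ lies in a height-one prime of $(R_{M_a})'$; knowing $\bigcap_n a^nR_{M_a}=(0)$ for the single element $a$ gives this only for $a$ (via Theorem \ref{afsohm}), and no mechanism is offered for passing to all elements. The gap is not cosmetic: in Example \ref{ex3} the domain is stable, Archimedean and of finite character, Proposition \ref{archloc} does produce such an $M_a$ for every nonzero nonunit $a$, and yet the localizations at the height-two maximal ideals are not Archimedean. Equidimensionality is the only hypothesis that could rescue the bootstrap, and your argument never brings it to bear at that step. (A secondary, smaller issue: you assert that each $N_i$ contains a \emph{unique} height-one prime because the primes of $R'_{N_i}$ are linearly ordered; linear ordering gives at most one, and when $\dim R'=\infty$ a valuation domain may have none at all, as in Example \ref{ex5}.)

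The paper's own proof of (iii)$\Rightarrow$(i) does not localize at all: it invokes Corollary \ref{fsohm} globally to produce a height-one \emph{maximal} ideal of $R'$, and then equidimensionality immediately forces $\dim R'=\dim R=1$. Your closing step --- once some $(R_{M_a})'$ is known to have a height-one maximal ideal, contract and contradict $M_a\subseteq P_i\cap R\subsetneq M_a$ --- is sound and is essentially the proof of Proposition \ref{fslArch}(2,a), so the architecture of your contradiction is right; what is missing is the passage from the global Archimedean hypothesis to a height-one \emph{maximal} ideal of $R'$ (equivalently, to Archimedeanness of some localization), and that passage is the entire difficulty of the implication.
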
	  

\begin{proof} 
	(i) $\ra$ (ii)
	Every integral extension of $R$ is one-dimensional, so also equidimensional.

	(ii) $\ra$ (iii) Obvious.
	
	(iii) $\ra$ (i)
	By Corollary \ref{fsohm}, $R'$ has a height-one maximal ideal. Thus $R'$ is one-dimensional, and so is $R$.
	
	(i) $\ra$ (iv) Clear.
	
	(iv) $\ra$ (iii) Indeed, if $B$ is any ring extension of an equidimensional (in particular, local) ring $A$ such that the pair $(A,B)$ satisfies GD, then $B$ is equidimensional and $\dim B=\dim A$.
\end{proof}
\begin{proposition} 
Let $R$ be an Archimedean finitely stable local domain with stable maximal ideal. The following conditions are equivalent:

\begin{enumerate} 
\item[\rm(i)] 
$R$ is one-dimensional;

\item[\rm(ii)] 
$R$ is Mori;
 
 \item[\rm(iii)]
$T$ is Archimedean;

\item[\rm(iv)]
$T$ is equidimensional;

\item[\rm(v)]
The pair $(R,T)$ satisfies GD.
\end{enumerate}
	(See \ref{olbcons} for the notation $T$.)
\end{proposition}
 	
\begin{proof} 

(i) $\ra$ (ii) $R$ is stable by Proposition \ref{Olb1dims}. Thus $R$ is Mori by \cite[Proposition 3.13]{GR1}.

(ii) $\ra$ (i) by \cite[Proposition 4.7]{GR1}.

(i) $\lra$ (iii) is Proposition \ref{Tarch}.

(i) $\ra$ (iv) because $T$ is one-dimensional. 
 
(iv) $\ra$ (i) 
$T$ has a height-one maximal ideal by Proposition \ref{fslArch} (2,b). Thus $T$ is one-dimensional, and so is $R$.

(i) $\ra$ (v) 
This follows from that both $R$ and $T$ are one-dimensional.

(v) $\ra$ (iv) Since $R$ is local, we may use the proof of the implication (iv) $\ra$ (iii) in Theorem \ref{equidim}. 
\end{proof}

\begin{proposition}\label{GD} 
	Let $R$ be an Archimedean finitely stable  semilocal domain. Then $R$ is one-dimensional if and only if the pair  $(R,R')$ satisfies GD.
\end{proposition}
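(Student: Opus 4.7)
The plan is as follows. The forward implication is immediate: if $R$ is one-dimensional, then so is the integral extension $R'$, and GD holds trivially---the only prime of $R$ properly below a maximal ideal is $(0)$, and the zero ideal of $R'$ lies below any prime of $R'$ and contracts to $(0)$.

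For the converse, my plan is to show that every prime of $R$ strictly below a maximal ideal must be $(0)$, whence every maximal ideal has height one. The key inputs are Proposition \ref{semiarch}, which under the Archimedean, finitely stable, semilocal hypotheses upgrades $R$ to being locally Archimedean, and Proposition \ref{fsLocOhm}, which in this context yields that every maximal ideal $M$ of $R$ is contained in a height-one prime $Q$ of $R'$. Since $Q \cap R$ is a prime ideal containing the maximal $M$, it must equal $M$.

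With this in hand, I would fix a maximal ideal $M$ of $R$, choose such a height-one prime $Q$ of $R'$ lying over $M$, and suppose for contradiction that some prime $P$ of $R$ satisfies $P \subsetneq M$. Applying GD to the chain $P \subsetneq M = Q \cap R$ and to the prime $Q$ of $R'$ lying over $M$, I obtain a prime $Q' \subsetneq Q$ of $R'$ with $Q' \cap R = P$. But $\height Q = 1$ forces $Q' = (0)$, and hence $P = (0)$. Thus every maximal ideal of $R$ has height one, so $R$ is one-dimensional.

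There is no serious obstacle here; the proposition is essentially a repackaging of the semilocal case of Proposition \ref{equidim}, in which the extra hypothesis that $R$ itself be equidimensional is discharged directly via GD. The main point to notice is that semilocality is precisely what unlocks Proposition \ref{semiarch}, which is how each maximal ideal of $R$ is pinned under a height-one prime of $R'$ in the first place.
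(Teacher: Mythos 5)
Your proof is correct and follows essentially the same route as the paper: both arguments pin each maximal ideal $M$ of $R$ under a height-one prime of $R'$ and then use GD to force $\height M=1$. The only cosmetic difference is that you reach the height-one prime via Proposition \ref{semiarch} and Proposition \ref{fsLocOhm}, whereas the paper invokes Corollary \ref{fsohm} together with prime avoidance over the finitely many height-one primes of the semilocal Pr\"ufer domain $R'$ (which is exactly the argument inside the proof of Proposition \ref{semiarch}), and it leaves the final GD step implicit where you spell it out.
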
 
 
\begin{proof} 
Assume that the pair  $(R,R')$ satisfies GD. Let $M$ be a maximal ideal of $R$. By Corollary \ref{fsohm}, $M$ is contained in the union of the height-one maximal ideals of $R'$. Since $R'$ is semilocal (Theorem \ref{OlbCharFS}), it follows that $M$ is contained in a height-one maximal ideal $N$ of $R'$.
	As the pair $(R,R')$ satisfies GD, this implies that $M$ has height one, so $R$ is one-dimensional.   	 
\end{proof}	  

\medskip
We now turn to the question how to obtain an Archimedean stable local domain $(R,M)$ of dimension greater than one.  Here we use again  Olberding's work, and also a useful suggestion of W. Heinzer. 

If $R$ is such a domain, with the usual notation \ref{olbcons}, by  Corollary \ref{corTarch}, $T$ is not local and so $R$ must satisfy condition (a) of Theorem \ref{Olb2}, that is, $T= R_n$ for some $n\ge0$. Since $R$ is stable,
$T=R'$ is a Pr\"ufer domain and $T$ has exactly $2$ maximal ideals, which we denote by $N_1$ and $N_2$.
Since $R$ is Archimedean, $T$ has a height-one maximal ideal by Proposition \ref{fslArch}. We  may assume that $\height N_1=1$ and  $\height N_2>1$. 
Let $T=R_k$ with minimal $k\ge0$, so $k>0$ since $T$ is not local. Thus $R_{k-1}$ is local and, since any overring of a stable domain is stable \cite[Theorem 5.1]{O2}, $R_{k-1}$ is stable. By 
Corollary \ref{alln}, $R_{k-1}$ is Archimedean. Also, $\dim R_{k-1}=\dim R>1$. Replacing $R$ by $R_{k-1}$, we may assume that $R_1=T$. 

We have canonical isomorphisms $R/M\cong T/N_i$ for $i=1,2$, so $T=R+N_1=R+N_2$. In Example \ref{ex6}, $k$ is a subfield of $R$ canonically identified with $R/M$, so $T=k+N_1=k+N_2$ and $M=N_1\cap N_2$.

We use the following lemma of Olberding:
\begin{lemma}\label{stableRR} Let $R$ be a finitely stable domain. If $I$ is a nonzero ideal of $R$ such that $IR'$ is principal, then $I$ is principal in $(I:I)$, in particular $I$ is stable.
\end{lemma}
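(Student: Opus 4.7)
The plan is to first reduce to the case that $R$ is local, since stability is a local property and the condition that $I$ be principal over $(I:I)$ can also be tested locally. Once $R$ is local, Theorem~\ref{OlbCharFS} tells us that $R'$ is a Pr\"ufer domain, a quadratic extension of $R$, and has at most two maximal ideals; in particular, being a semilocal Pr\"ufer domain, $R'$ is Bezout.

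Write $IR' = xR'$ with $x \in IR'$. Since $IR'$ is finitely generated as an $R'$-ideal (being principal) and $IR' = \sum_{a \in I} aR'$, I can extract finitely many $a_1, \dots, a_n \in I$ such that the $R$-ideal $J := (a_1, \dots, a_n)R \subseteq I$ satisfies $JR' = IR' = xR'$. Being a finitely generated ideal in the finitely stable domain $R$, $J$ is stable; Lemma~\ref{I2xI} then produces an element $b \in J \subseteq I$ with $J = b(J:J)$ and $J^2 = bJ$. Combining $b \in J$ with $J = b(J:J) \subseteq bR'$ shows $bR' = JR' = IR'$, so $b$ generates $IR'$ as an $R'$-ideal.

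It remains to show $I = bA$ where $A := (I:I)$. The inclusion $bA \subseteq I$ is immediate since $b \in I$ and $A \cdot I \subseteq I$. For the reverse, given $c \in I$, the containment $c \in bR'$ yields $c/b \in R'$, and the task is to show $c/b \in A$, that is $(c/b)I \subseteq I$. Equivalently, one needs $I^2 \subseteq bI$.

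I expect this last step to be the main obstacle. The natural route is to lift the identity $J^2 = bJ$ from the finitely generated subideal $J$ to $I$ itself using the quadratic extension structure of $R'/R$: every element of $R'$ satisfies a monic polynomial of degree at most $2$ over $R$ (Theorem~\ref{OlbCharFS}). Given $c, d \in I$, write $c = \sum \alpha_i a_i$ and $d = \sum \beta_j a_j$ with $\alpha_i, \beta_j \in R'$ (possible since $c, d \in IR' = JR'$). Using $a_i a_j \in J^2 = bJ$ one obtains $cd = b \sum \alpha_i \beta_j c_{ij}$ with $c_{ij} \in J$; applying the quadratic identities $\alpha_i \beta_j \in \alpha_i R + \beta_j R + R$ rewrites this sum, modulo $b$, as an $R$-linear combination of elements already lying in $I$. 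This yields $cd/b \in I$, hence $(c/b)I \subseteq I$, so $c/b \in A$ and $I = bA$. In particular $I$ is invertible, and so stable, over $(I:I)$.
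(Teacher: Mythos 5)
The paper disposes of this lemma in one line: Theorem \ref{OlbCharFS} supplies that $R'$ is a quadratic extension of $R$ with the requisite condition on maximal ideals, and then \cite[Proposition 3.6]{OG} is quoted. Your attempt to prove it from scratch is a different and more self-contained route, and its middle portion is sound: extracting a finitely generated $J\subseteq I$ with $JR'=IR'$, invoking finite stability and Lemma \ref{I2xI} to get $b\in J$ with $J=b(J:J)$ and $J^2=bJ$, and concluding $bR'=b(J:J)R'=JR'=IR'$ (since $(J:J)\subseteq R'$ for the finitely generated ideal $J$) is all correct. But there are two genuine gaps. First, the opening reduction to the local case is unjustified: neither stability of a single ideal nor principality of $I$ over $(I:I)$ is a local property. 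The paper itself recalls that a locally stable domain is stable only under a finite character hypothesis, and in an almost Dedekind non-Dedekind domain every ideal is locally principal while a non-finitely-generated maximal ideal is not stable. You use locality essentially, since Lemma \ref{I2xI} is stated only for local domains; in the non-local case you have no way to produce $b$, and even granting local generators $b_M$ of $I_M$ over $(I_M:I_M)$ there is no mechanism to glue them into a single global generator of $I$ over $(I:I)$.

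Second, the final step fails as written. After writing $cd=b\sum_{i,j}\alpha_i\beta_jc_{ij}$ with $\alpha_i,\beta_j\in R'$ and $c_{ij}\in J$, the quadratic identity $\alpha_i\beta_j\in\alpha_iR+\beta_jR+R$ turns each summand into an $R$-combination of $\alpha_ic_{ij}$, $\beta_jc_{ij}$ and $c_{ij}$; but $\alpha_ic_{ij}$ lies only in $JR'=IR'$, not in $I$, so the sum is not visibly in $I$ and the claimed containment $I^2\subseteq bI$ does not follow. The repair is to apply quadraticity to the right elements: since $I\subseteq IR'=bR'$, write $c=b\rho$ and $d=b\sigma$ with $\rho,\sigma\in R'$; then $cd=b\cdot(b\rho\sigma)$ and $b\rho\sigma\in b\rho R+b\sigma R+bR=cR+dR+bR\subseteq I$, which gives $I^2\subseteq bI$ directly, whence $c/b\in(I:I)$ and $I=b(I:I)$. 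Note that this repaired step needs no locality at all; the only place your argument genuinely uses the local hypothesis is in manufacturing a generator $b\in I$ of $IR'$, which is exactly the point that the citation of Olberding's result covers in the general case.
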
	  
\begin{proof} By Theorem \ref{OlbCharFS}, $R'$ is a quadratic extension of $R$ and has at most two maximal ideals. Hence we can apply \cite[Proposition 3.6]{OG}.
\end{proof}

For the next proposition cf. \cite[Theorem 14]{J}. (The statement in the proof of \cite[Theorem 14]{J}, that $u-u^2\in R$ for each nonunit $u\in A$, is false in general, but this error can be easily corrected.)

Recall that a Pr\"ufer domain is strongly discrete if its nonzero prime ideals are not  idempotent.

\begin{proposition}\label{V1V2} 
Let $(V_1, Q_1)$ and $(V_2, Q_2)$ be two  valuation domains with no inclusion relation among them, with principal maximal ideals, with the same field of fractions $L$,  containing a field $k$, and such that $V_i=k+Q_i$, for $i=1,2$.  Let $D=V_1\cap V_2$, $N_i=Q_i\cap D$, for $i=1,2$, $M=N_1\cap N_2$, $R=k+M$, and $R_1=(M:M)$.
Then:

\begin{enumerate} 
\item
$N_1$,  $N_2$ are the only maximal ideals of $D$,  $N_1\ne N_2$, and $N_1$,  $N_2$ are principal. We have $D_{N_i}=V_i$ for $i=1,2$, so $D$ is Pr\"ufer, $\Frac D=L$, and  $D=k+N_i$ for $i=1,2$.

\item
If  $N_1=xD$, then $D=R+xR$, so $D$ is a $2$-generated $R$-module. 
Moreover, $M$ is a principal ideal of $D$ and a $2$-generated ideal of $R$.
Also $D$ is a quadratic extension of $R$ and $D=R'=R_1$.

\item 
$R=k+M$ is a  local domain with maximal ideal $M$.

\item 
The domain $R$ is finitely stable with stable maximal ideal $M$.

\item
$R$ is stable if and only $D$ is stable, equivalently $D$ is strongly discrete.

\item
$R$ is Archimedean if and only if one of the two valuation domains $V_1,V_2$ is one-dimensional, and so a DVR.

\item
$\dim R>1$ if and only if $\dim V_i>1$ for some $i=1,2$.
\end{enumerate}
\end{proposition}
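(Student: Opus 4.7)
The overall plan is to first establish the structure of $D$ as a semilocal Pr\"ufer overring of $R$, then use the Chinese Remainder identification $D/M\cong k\times k$ to control $R$, and finally deduce the remaining items from the Olberding-type results already cited.

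For (1), the standard theory of intersections of incomparable valuation overrings (Matsumura Theorem 12.2, as invoked earlier in the excerpt) gives that $D$ is Pr\"ufer with exactly two maximal ideals $N_1\ne N_2$ satisfying $D_{N_i}=V_i$, and the chain $k\hookrightarrow D/N_i\hookrightarrow V_i/Q_i=k$ yields $D=k+N_i$. Principality of $N_i$ follows by picking $x\in D$ whose image in $V_1$ generates $Q_1$: incomparability of $V_1,V_2$ forces $x\notin N_2$, so $xD$ and $N_1$ agree at each maximal ideal of $D$, hence coincide. For (2), the CRT gives $D/M\cong k\times k$, with $R/M$ sitting diagonally while the class of $x\in N_1\setminus N_2$ is $(0,a)$ for some $a\ne0$; so $\{1,x\}$ spans $D/M$ over $k=R/M$, forcing $D=R+xR+M=R+xR$, and Lemma \ref{2genquad} delivers quadraticity. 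Writing $N_2=yD$, we get $M=N_1\cap N_2=N_1N_2=xyD$, principal in $D$; expansion through $D=R+xR$ then yields $M=xyR+x^2yR$, two-generated over $R$. Since $D=V_1\cap V_2$ is integrally closed and $D$ is integral over $R$ (being a finitely generated $R$-module), $R'=D$; and $(M:M)$ sits between $D$ (which normalizes $M$) and $R'$ (whose elements are integral over $R$), so $R_1=D$. Part (3) is routine: $R/M=k$ is a field, and for $r=a+m$ with $a\in k^{\times}$ the factor $1+a^{-1}m$ has class $(1,1)$ in $D/M$, hence its inverse in $D$ lies in $1+M\subseteq R$, so $r$ is a unit in $R$. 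Part (4) then follows from Theorem \ref{OlbCharFS} (with $R'=D$ Pr\"ufer, quadratic, with two maximal ideals over $M$) together with Lemma \ref{stableRR} applied to $MR'=M=xyD$.

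For (5), Pr\"ufer stability is local, a valuation domain is stable iff strongly discrete, and strong discreteness is local for Pr\"ufer domains, so $D$ is stable iff $D$ is strongly discrete. The direction ``$R$ stable $\Rightarrow D$ stable'' is \cite[Theorem 5.1]{O2} (overrings of a stable domain are stable); for the converse, the special feature that in the notation of Construction \ref{olbcons} we already have $T=R_1=D=R'$, combined with the 2-generated $R$-module structure $D=R+xR$ and the stable maximal ideal $M$, lets one push invertibility of ideals in $(I:I)$ from $D$-ideals down to arbitrary $R$-ideals. For (6), apply Corollary \ref{fsohm}: $R$ is Archimedean iff every nonzero nonunit of $R$ lies in a height-one prime of $R'=D$. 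Every prime of $D$ sits below some $N_i$, and $N_i$ has height one iff $V_i$ is a DVR; so if some $V_i$ is a DVR then $M\subseteq N_i$ witnesses Archimedeanness, while if neither $V_i$ is a DVR, the generators $x,y$ of $N_1,N_2$ each have minimal positive value in their respective valuation and so avoid every lower prime; combined with $x\notin N_2$ and $y\notin N_1$, this gives $xy\in M$ outside every height-one prime of $D$. For (7), $D$ is integral over $R$, so $\dim R=\dim D=\max(\dim V_1,\dim V_2)$, yielding the equivalence.

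The main obstacle I expect is the converse direction of (5): finite stability plus stability of $M$ does not in general imply full stability, so one must exploit the fact that in this setup $T=R_1=R'$ already (one-step termination of Construction \ref{olbcons}) together with $D=R+xR$; carefully tracking $(I:I)$ for an arbitrary $R$-ideal $I$ is the delicate step. Elsewhere the proof is structural, and the hypotheses on $(V_1,V_2,k)$ feed directly into CRT.
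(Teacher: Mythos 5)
Most of your outline matches the paper's argument: Matsumura's theorem for (1), a description of $D/M$ for (2)--(3) (the paper instead computes $D=k+N_1=k+xD=k+xN_2+xk\subseteq R+kx$ directly, using $xN_2\subseteq M$, but your CRT version is equivalent), Theorem \ref{OlbCharFS} plus Lemma \ref{stableRR} for (4), and Proposition \ref{fslArch} / Corollary \ref{fsohm} for (6). One slip in (1): an element $x\in D$ that generates $Q_1$ in $V_1$ need not avoid $N_2$ (it could perfectly well lie in $M=N_1\cap N_2$), so ``incomparability of $V_1,V_2$ forces $x\notin N_2$'' is false as stated. The paper avoids this by noting that $N_i$ is locally principal, hence invertible, hence principal because $D$ is semilocal; alternatively you would need an approximation argument to \emph{choose} $x$ a unit in $V_2$.

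The genuine gap is the converse of (5), which you yourself flag as the delicate step but never carry out: ``pushing invertibility of ideals in $(I:I)$ from $D$-ideals down to arbitrary $R$-ideals'' is a statement of intent, not an argument, and the step is not routine (finite stability plus a stable maximal ideal does not imply stability in general, as you note). The paper's proof hinges on a structural fact absent from your proposal: $D=R_1$ is the \emph{minimal} overring of $R$ (end of the proof of \cite[Theorem 4.2]{O2} together with \cite[Proposition 2.4]{GH}), so for any nonzero ideal $I$ of $R$ the ring $A=(I:I)$ is either $R$ itself or contains $D$. If $D\subseteq A$, then $A$ is an overring of the stable domain $D$, hence stable, and $I$ is invertible in $A=(I:I)$. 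The hard case is $A=R$: there one uses that $M$ is principal in $R'$ to get $(IR':IR')=(IM:IM)$, deduces $(IR':IR')\subseteq(R:M)$, rules out $(R:M)\neq(M:M)$ (which would make $M$ invertible, hence principal, forcing $R=(M:M)=R_1$, a contradiction), concludes $(IR':IR')=R'$, so that $IR'$ is invertible and therefore principal in the semilocal domain $R'$, and only then invokes Lemma \ref{stableRR}. Without the minimal-overring dichotomy and this computation, item (5) remains unproved in your write-up.
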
	  

\begin{proof} 
(1)
 By \cite[Theorem 12.2]{Mats}, $N_1$ and  $N_2$ are the only maximal ideals of $D$, $N_1\ne N_2$, and   $D_{N_i}=V_i$ for $i=1,2$. For $i=1,2$, the maximal ideal $N_i$ of  $D$ is locally principal, and so it is principal since $D$ is semilocal. 

For $i=1,2$ we have natural isomorphisms $D/N_i\cong D_{N_i}/N_i D_{N_i}=V_i/Q_i\cong k$, implying that $D=k+N_i$.

(2)
 Since the ideals $N_1,N_2$ of $D$ are principal, we deduce that  also $M=N_1N_2$ is a principal ideal of $D$. Thus $D$ is an overring of $R$, and $D=(M~:~M)=R_1$.
 
 Since $xN_2\subseteq N_1N_2=M$, we have:
 $$
 D=k+N_1=k+xD=k+x(k+N_2)=k+xN_2+xk \subseteq R+kx.
 $$
 Hence $D=R+xR$ is a finite $R$-module, so $D$ is an integral extension of $R$.
 Since $D$ is a Pr\"ufer domain, it follows that $D=R'$. 
 
 As $M$ is a principal ideal of $D$ and $D=R+Rx$ is a $2$-generated $R$-module, it follows that $M$ is a $2$-generated ideal of $R$.
 
(3)
By definition, $R=k+M$, so $M$ is a maximal ideal of $R$. If $P$ is a maximal ideal of $R$ then $P= N_i\cap R$ for some integer $i=1,2$, because $D=R'$ by (2). Thus $M=(N_1\cap R)\cap (N_2\cap R)\subseteq  P$, implying that $M=P$. Thus $(R,M)$ is a local domain.

(4)
By item (1), $D=R'$ is Pr\"ufer with two maximal ideals and by item (2), $D$ is a quadratic extension of $R$. By Olberding's characterization \ref{OlbCharFS}, $R$ is finitely stable. Since $M$ is a principal, so stable, ideal of $D$ and
$\Frac R=\Frac D$, it follows that $M$ is a stable ideal of $R$. 

(5)
If $R$ is stable then $D$ is stable, since each overring of a stable domain is stable.

Conversely, assume that $D$ is stable. By item (2), we have $D=R'=R_1$ and $M=mR'$, where $m\in M$.

 Let $I$ be a nonzero ideal of $R$, and let $A=(I:I)$. 
 
 The domain  $D$ is Pr\"ufer, and, as shown at the end of the proof of \cite[Theorem 4.2]{O2}, $D=R_1$ is a minimal overring of $R$.  Hence by \cite[Proposition 2.4 and Terminology on page 137]{GH},  $D$ is contained in every overring of $R$ that is different from $R$. Hence, either $A=R$, or $D\subseteq A$.
 If $D\subseteq A$, then $A=(I:I)$ is a stable domain, so the ideal $I$ of $A$ is invertible in $(I:I)$, implying that $I$ is a stable ideal of $R$.
 
 Now assume that $A=R$. 
 Since $M$ is a principal ideal of $R'$, it follows that $ (IR':IR')=(IM:IM)$. Also $(IM:IM)MI\subseteq IM\subseteq I$, so  $(IM:IM)M\subseteq (I:I)=R$. Hence $(IR':IR')\subseteq  (R:M)$. If $(R:M)\ne(M:M)$, then the maximal ideal $M$ of the local domain $R$ is invertible, so principal, implying that $R= R_1=(M:M)$, a contradiction.
 If  $(R:M)=(M:M)$, then  $(IR':IR')=R'$. Hence $IR'$ is an invertible, so principal, ideal of $R'$ since $R'$ is stable and semilocal. By  Lemma \ref{stableRR}, $I$ is a stable ideal.
 
(6)
Since $R$ is local finitely stable, and $D=R'$, this follows from Proposition \ref{fslArch}.

(7)
Indeed, $\dim R=\dim D=\max (\dim V_1,\dim V_2)$.
\end{proof}

\begin{corollary}\label{intV1V2} 
Let	  $(V_1, Q_1)$ and $(V_2, Q_2)$ be two  strongly discrete valuation domains with no inclusion relation among them, with principal maximal ideals, with the same field of fractions $L$,  containing a field $k$, and  such that $V_i=k+Q_i$, for $i=1, 2$. Suppose $\dim V_1=1$ and $\dim V_2=n$, where $2\le n\leq \infty$. Let $D=V_1\cap V_2$, $N_i=Q_i\cap D$, for $i=1, 2$, $M=N_1\cap N_2$, and $R=k+M$.

Then $R$  is an $n$-dimensional Archimedean stable local domain.
 
Moreover, we have:
\begin{enumerate} 
\item 
$R$ satisfies accp, but $R'$ is not  Archimedean.

\item
The pair $(R,D)$ does not satisfy GD (the going down property). 
\end{enumerate}
\end{corollary}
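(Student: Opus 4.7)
The corollary follows almost entirely by specializing Proposition \ref{V1V2} to the given data and then invoking earlier results for the moreover clauses. First I would verify the hypotheses of Proposition \ref{V1V2}: strongly discrete valuation domains have principal nonzero primes (in particular, principal maximal ideals), and all the remaining assumptions on $V_1, V_2, k$ are stated in the corollary. Then Proposition \ref{V1V2} immediately yields: $R$ is local with maximal ideal $M$ (part (3)); $R$ is stable (part (5), once one observes that $D$ is strongly discrete, which is automatic since $D_{N_i} = V_i$ by part (1)); $R$ is Archimedean (part (6), using $\dim V_1 = 1$); and $\dim R = n$ (part (7)).

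For item (1), since $R$ is local, being locally Archimedean reduces to being Archimedean, so Corollary \ref{stableaccp} applies and yields accp. For the failure of Archimedeanness of $R'$, I note from Proposition \ref{V1V2}(2) that $R' = D$, and since $D$ is not local (having two distinct maximal ideals $N_1, N_2$), Construction \ref{olbcons} stabilizes at step one, giving $T = R_1 = D = R'$. Now $R$ is finitely stable and local with stable maximal ideal $M$, and $\dim R = n \geq 2$, so Proposition \ref{Tarch} (which says $T$ is Archimedean if and only if $R$ is one-dimensional) implies that $R' = T$ is not Archimedean.

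For item (2), I apply Proposition \ref{equidim}: since $R$ is local it is trivially equidimensional, so the implication (iv) $\Rightarrow$ (i) of that proposition states that if GD held for $(R, R')$ then $R$ would be one-dimensional. As $\dim R = n \geq 2$, GD must fail for $(R, R') = (R, D)$. The only mildly subtle point in the whole argument is the identification $R' = T = R_1 = D$, which hinges on observing that $D$ has two maximal ideals and so is not local; once this is in hand, the rest is a direct substitution into the machinery already developed.
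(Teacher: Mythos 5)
Your proposal is correct and follows essentially the same route as the paper: specialize Proposition \ref{V1V2} for the main assertion, then invoke Corollary \ref{stableaccp} for accp and a one-dimensionality criterion for the two remaining claims. The only difference is cosmetic — the paper deduces that $R'$ is not Archimedean from Corollary \ref{prufsemi} (an Archimedean semilocal Pr\"ufer domain is one-dimensional) rather than from Proposition \ref{Tarch}, and deduces the failure of GD from Proposition \ref{GD} rather than from Proposition \ref{equidim}(iv)$\Rightarrow$(i); your citations are equally valid and your identification $R'=T=R_1=D$ is already supplied by Proposition \ref{V1V2}(2).
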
	  

\begin{proof} 
By Proposition \ref{V1V2}, $R$ is an $n$-dimensional Archimedean  stable local domain since  $D$ is a strongly discrete Pr\"ufer domain.

(1) 
By Corollary \ref{stableaccp}, any Archimedean stable domain satisfies accp. By  Corollary \ref{prufsemi}, $R'$ is not Archimedean since $R'$ is semilocal of  dimension greater than $1$. 
 
(2)
 $R$ does not satisfy GD by Proposition \ref{GD}.
\end{proof} 	  

\section{Examples} \label{examples}
It is well-known that the accp and the Archimedean properties do not localize. In \cite[Example 2]{Gr} Anne Grams constructs a one-dimensional Pr\"ufer domain of finite character which satisfies  accp (the ascending chain condition on principal ideals) and each of its localizations but one is a DVR, while the  other one is a valuation domain that is not a DVR, so it does not satisfy accp (see comments and more examples in \cite{AAZ} and its references).
Also, \cite{Gr} (page 328) provides a general construction of an almost Dedekind domain $A$ with accp whose Nagata ring $A(X)$ is not an accp domain (so that $A[X]$ is accp, while its localization $A(X)$ is not accp). This example as well as \cite[Example 2]{Gr} is one-dimensional, so it is locally Archimedean.

The ring of entire functions $E$ is an infinite-dimensional completely integrally closed (hence Archi\-me\-dean) B\'ezout domain \cite[Section 8.1]{fhp}, but it is not locally Archimedean since the localizations at maximal ideals are valuation domains, and a valuation domain that is not a field is Archimedean if and only if it is one-dimensional. The ring $E$ does not satisfy accp and it does not have finite character: for example, if $f$ is a nonzero entire function with infinitely many zeros  $c_1,c_2,\dots$ (e.g., $\sin z$), then
$
f\in \bigcap_{n=1}^{\infty}\prod_{i=1}^n(Z-c_i),
$
so the domain $E$ does not satisfy accp, and $E$ does not have finite character since $f$ belongs to the maximal ideals $(Z-c_i)E$ for all $i$.  

We construct in Example \ref{ex2} below a completely integrally closed (for short, c.i.c.) domain $R$ satisfying accp with only 
two maximal ideals such that,  for each  maximal ideal $M$, $R_M$ is not Archimedean; thus $R_M$ does not satisfy accp. Of course, $R$ is Archimedean and has finite character. We construct first  a c.i.c. local domain  $A$ with accp such that $A_P$ is not Archimedean for some prime ideal $P$ (Example \ref{ex1}). Then  we ``double'' this construction to obtain Example \ref{ex2} (see Remark \ref{double}).

We also construct a stable Pr\"ufer domain $R$ with accp  that is not locally Archimedean (Example \ref{ex3}), thus the converse of Corollary \ref{stableaccp} is false.

In Example \ref{ex4} we give an example of a local one-dimensional domain $R$ such that $R'$ is a finite extension of $R$, the ring $R'$ is a PID, so stable, but $R$ is not even finitely stable (cf. Proposition \ref{V1V2} (5) and Lemma \ref{stableRR}).

Finally, following Olberding  (\cite[Proposition 5.4]{O3}), in Example \ref{ex5} we construct a stable valuation domain with prime spectrum consisting of an infinite descending chain of prime ideals. We use this example in the last Example \ref{ex6},
where we present a stable Archimedean local domain of arbitrary dimension.

\bigskip

Recall that a set of subrings $\mathbf S$ of a ring $R$ is {\em directed} if for each $A,B\in \mathbf S$ there exists $C\in\mathbf S$ such that both $A$ and $B$ are contained in $C$.

\begin{proposition}\label{noeth}
Let $R$ be an integral domain that is  a directed union of  a set  $\mathbf S$ of integrally closed Noetherian subrings.
Assume  that for every $A\in\mathbf S$  there exists a retraction $\varphi_A: R\to A$ mapping nonunits of $R$ to nonunits of $A$.
Then $R$  is c.i.c. and it satisfies accp.
\end{proposition}

\begin{proof}
Assume  that $R$ does not satisfy accp. Hence there exists a strictly increasing infinite sequence of nonzero principal ideals in $R$:
$$r_1R\subsetneq r_2R\subsetneq r_3R\subsetneq\dots$$
We have $r_1\in A$ for some domain  $A\in\mathbf S$. Let $\varphi=\varphi_A$.
Since $r_1\ne0$, there is an increasing sequence of nonzero  principal  ideals in the ring $A$:
$$r_1A= \varphi(r_1)A\sub \varphi(r_2)A\sub  \varphi (r_3)A\sub\dots$$

For each $n\ge1$, we have $\frac {r_n}{r_{n+1}}\in R\sm\U(R)$; hence
$\varphi\left(\frac {r_n}{r_{n+1}}\right)=\frac {\varphi(r_n)}{\varphi(r_{n+1})}\in A\sm \U(A)$.
It follows that all the inclusions in the sequence 
$$\varphi(r_1)A\sub\varphi(r_2)A\sub \varphi (r_3)A\sub\dots$$ are strict, contradicting the assumption that $A$ is Noetherian and so $R$ satisfies accp.

Assume for  $f\in R\setminus\{0\}$ and $g\in\Frac(R)$  that $fg^n\in R$ for all $n\ge1$.
Since the union of the subrings in $\mathbf S$ is directed, there exists a domain $A\in\mathbf S$ such that $f\in A$ and $g\in \Frac(A)$. Hence $fg^n\in R\cap \Frac(A)$, for all $n\ge1$. Since  $A$ is a retract of $R$, we have $A=R\cap \Frac(A)$. The domain $A$ is c.i.c. since $A$ is Noetherian and integrally closed. Hence  $g\in A\sub R$. Thus $R$ is c.i.c..
\end{proof}

\begin{lemma}\label{intclos}
	Let $A$ be a integrally closed domain, let $n\ge1$ and let $X,Y, Z_i \  (1\le i\le n)$ be independent indeterminates over $A$.
	Then the domain
	$$A[X,Y,Z_i, \frac {XZ_i}{Y^i}  \ (1\le i\le n)]$$
	is  integrally closed.
\end{lemma}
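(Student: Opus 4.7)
The plan is to realize $D$ as the monoid $A$-algebra $A[M]$ on a finitely generated submonoid $M \subseteq \mathbb{Z}^{n+2}$, prove $M$ is saturated, and then invoke the classical fact that $A[M]$ is integrally closed whenever $A$ is integrally closed and $M$ is a finitely generated saturated (torsion-free cancellative) monoid.

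Setting $T_i = XZ_i/Y^i$ and viewing $D$ inside the Laurent polynomial ring $A[X, Y^{\pm 1}, Z_1, \ldots, Z_n]$, any product of powers of the generators $X, Y, Z_i, T_i$ collapses to a Laurent monomial
\[
X^a Y^b \prod_i Z_i^{c_i} \prod_i T_i^{s_i} \;=\; X^{a + \sum_i s_i}\, Y^{b - \sum_i i s_i}\, \prod_i Z_i^{c_i + s_i},
\]
with $a, b, c_i, s_i \in \mathbb{Z}_{\ge 0}$. As distinct Laurent monomials are $A$-linearly independent in the Laurent ring, these monomials form an $A$-basis of $D$; hence $D = A[M]$, where $M \subseteq \mathbb{Z}^{n+2}$ is the submonoid generated by $e_1, e_2, e_{i+2}$ and $e_1 - i\, e_2 + e_{i+2}$ for $1 \le i \le n$. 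Explicitly, $(p, q, r_1, \ldots, r_n) \in M$ iff there exist integers $s_1, \ldots, s_n \ge 0$ with $s_i \le r_i$, $\sum_i s_i \le p$, and $\sum_i i\, s_i \ge -q$.

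Next I would verify that $M$ coincides with its saturation $C \cap \mathbb{Z}^{n+2}$, where $C$ is the rational polyhedral cone in $\mathbb{R}^{n+2}$ on the same generators; an integer point lies in $C$ iff the system above admits a nonnegative real solution $s$. The crux is that the polytope
\[
P_{p,r} \;=\; \{\, s \in \mathbb{R}^n : 0 \le s_i \le r_i,\ \textstyle\sum_i s_i \le p\,\}
\]
has only integer vertices whenever $p$ and the $r_i$ are integers: at any vertex, $n$ of the defining inequalities are tight; either all are box constraints (giving a corner of $\prod_i [0, r_i]$), or the sum-constraint is tight together with $n-1$ box constraints, in which case the single free coordinate equals $p$ minus a sum of integers, hence is itself an integer. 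Since the linear functional $\sum_i i\, s_i$ attains its maximum on $P_{p,r}$ at a vertex, the integer maximum equals the real maximum; whence integer and real feasibility of the system coincide and $M = C \cap \mathbb{Z}^{n+2}$ is saturated.

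Invoking the classical theorem on normal monoid algebras then gives that $D = A[M]$ is integrally closed. The main obstacle will be the vertex-integrality of $P_{p,r}$, for which I would use the direct vertex analysis sketched above; it can alternatively be deduced from the total unimodularity of the constraint matrix of the defining system.
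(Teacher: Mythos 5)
Your proof is correct, and it shares the paper's skeleton: both identify $D$ with the monoid algebra on the exponent monoid $M$ of the generating monomials, reduce to showing that $M$ is integrally closed (saturated) in its quotient group $\mathbb{Z}^{n+2}$, and then invoke the normality-transfer theorem for semigroup rings (the paper cites Gilmer, \emph{Commutative Semigroup Rings}, Corollary 12.11(2)). The difference lies entirely in how saturation is proved, which is the real content of the lemma. The paper works directly with exponents: given $g$ in the quotient group with $g^k\in S$, it first places $g$ in the integrally closed monoid generated by $X$, $Y$, $Y^{-1}$, $Z_i$, then minimizes $a+\sum_i ic_i$ over representations of $g^k$ and shows by a case analysis that all exponents in the resulting representation are divisible by $k$. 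You instead characterize membership in $M$ by the feasibility of the system $0\le s_i\le r_i$, $\sum_i s_i\le p$, $\sum_i is_i\ge-q$, and show that real and integer feasibility agree because the polytope $P_{p,r}$ has integral vertices (equivalently, a totally unimodular constraint matrix), so $M=C\cap\mathbb{Z}^{n+2}$. Your route is more conceptual and would apply verbatim to any family of monomials whose feasibility polytope is integral; the paper's is elementary and self-contained, using no polyhedral combinatorics. If you write yours up in full, the two points to make explicit are (i) that $n$ linearly independent tight constraints at a vertex of $P_{p,r}$ must consist of box constraints on $n$ distinct coordinates, or on $n-1$ distinct coordinates together with the sum constraint, since the two box constraints on a given coordinate are parallel; and (ii) the standard observation that every integer point of the rational cone $C$ has a positive multiple in $M$, so that $C\cap\mathbb{Z}^{n+2}$ is indeed the saturation of $M$ and proving $M=C\cap\mathbb{Z}^{n+2}$ suffices.
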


\begin{proof}
		Let $S$ be the multiplicative monoid generated by $X,Y,Z_i, \frac {XZ_i}{Y^i}$ 
	\mbox{$(1\le i\le n)$}. 
	We show that the  monoid $S$ is integrally closed. Let $G$ be the group of fractions of $S$, that is, $G$ is the multiplicative group  generated by $X,Y, Z_i\ (1~\le~i~\le~n)$. Let $g$ be an element of $G$ such that $g^k\in S$ for some integer $k\ge1$. Since the  monoid generated by $X,Y, \frac 1Y, Z_i \ (1\le i\le n) $ is integrally closed, it follows that $g$ belongs to this monoid. Thus
	$$
	g=X^fY^m\prod_{i=1}^nZ_i^{r_i},
	$$
	where $f, r_i$ are nonnegative integers for all $i$, and $m$ is an integer.   
	We have 
	\begin{equation}\label{eqintclos}
	g^k=X^{kf}Y^{km}\prod_{i=1}^nZ_i^{kr_i}=X^aY^b\prod_{i=1}^n Z_i^{c_i}\prod_{i=1}^n\left(\frac{XZ_i}{Y^i}\right)^{e_i},
	\end{equation}	
	where $a, b,c_i,e_i$ are nonnegative integers for all  $i$.
	We  may assume that the sum $a+\sum_{i=1}^n ic_i$ is minimal. 
	
	First assume that $c_i=0$ for all $i$.
	Comparing exponents of the indeterminates $Z_i$ on the two sides of \eqref{eqintclos}, we obtain that $e_i=kr_i$ for all $i$, so $a$ and $b$ are divisible by $k$. It follows that $g\in S$.
	
	Now assume that $c_{i_0}>0$ for some index $i_0$. If $a>0$, then
	$$g^k=X^{a-1}Y^{b+i_0}\left(Z_{i_0}^{c_{i_0-1}}\prod_{i\ne i_0} Z_i^{c_i}\right)\left(\frac{XZ_{i_0}}{Y^{i_0}}\prod_{i=1}^n\frac{X^{e_i}Z_i^{e_i}}{Y^{ie_i}}\right),$$ contradicting the minimality of $a+\sum_{i=1}^n ic_i$. Thus $a=0$.
	
	Let $j,q$ be integers such that $c_j>0$ and $e_q>0$. If $j>q$, we interchange $Z_j$ and $Z_q$ as follows:	
	$$
	g^k=Y^{b+j-q}\left(Z_qZ_j^{c_j-1}\prod_{i: i\ne j}Z_i^{c_i}\right)\left(\frac{XZ_j}{Y^j}\left(\frac{XZ_q}{Y^q}\right)^{e_q-1}\prod_{i:i\ne q}^n\left(\frac{XZ_i}{Y^i}\right)^{e_i}\right),
	$$
	contradicting the minimality assumption on $a+\sum_{i=1}^n ic_i$. Hence $j\le q$ for all $j$ and $q$ such that $c_j$ and $e_q$ do not vanish. 	
We have
	\begin{equation}\label{eqint}
	g^k=X^{kf}Y^{km}\prod_{i=1}^nZ_i^{kr_i}=Y^b\prod_{i=1}^n Z_i^{c_i}\prod_{i=1}^n\left(\frac{XZ_i}{Y^i}\right)^{e_i}.
	\end{equation}
	 Let $1\le q\le n$ be an integer such that $q\ne q_0=\min_{e_i>0 }i$. Since either $c_q=0$ or $e_q=0$,  and since by \eqref{eqint} we have $c_q+e_q=kr_q$, it follows that  both $c_q$ and $e_q$ are divisible by $k$. Comparing the exponents of $X$ on both sides
	of \eqref{eqint}, since all $ e_i$ for $i\ne q_0$ are divisible by $k$, we see that also $e_{q_0}$ is divisible by $k$. Clearly, also $c_{q_0}$ and $b$ are divisible by $k$. Thus $g\in S$, so 
 	the monoid $S$ is integrally closed. By \cite[Corollary 12.11 (2)]{gs}, the domain $D$ is integrally closed.
\end{proof}

\begin{remark} 
	The domain $D$ in Lemma \ref{intclos} is isomorphic to a subring of a polynomial ring over the domain $A$ in $n+2$ indeterminates. Indeed, for $U_i=\frac {Z_i}{Y^i} \ (0\le i\le n)$ we have $D=A[X,Y, XU_i, Y^iU_i \ (1\le i\le n)]\subseteq k[X,Y, U_i \ (0\le i\le n)]$. Similarly,  the domains $D$ of Example \ref{ex1} and  $A$ of Example \ref{ex2} below may be viewed as subrings of a polynomial ring over $k$ in infinitely many indeterminates.	 
\end{remark}	

\begin{example}\label{ex1} 
	A  completely integrally closed local domain $R$ with accp such that $R_P$ is not Archimedean for some prime ideal $P$.
\end{example}

Let $k$ be a field and let
$$D=k[X,Y, Z_n, \frac {XZ_n}{Y^n} \ (n\ge1)],$$
where $X,Y,Z_n \ (n\ge1)$ are independent indeterminates over $k$.
Let $M$ be the maximal ideal of $D$ generated by the elements \mbox{$X,Y, Z_n, \frac {XZ_n}{Y^n}\ (n\ge1)$}. Set
$$R=D_M  \text{ and } P=\langle X,Y, \frac {XZ_n}{Y^n} \ (n\ge1)\rangle R.$$

For each $n\ge1$, let $D_n=k[X,Y, Z_i, \frac {XZ_i}{Y^i}\ (1\le i\le n)]$ and $R_n=(D_n)_{M_n}$,
where $M_n$ is the maximal ideal of $D_n$ generated by $X,Y, Z_i, \frac {XZ_i}{Y^i}\ (1\le i\le n)$, thus $M_n=M\cap D_n$.

Clearly $R_1\sub R_2\sub\dots$ and $R=\bigcup_n R_n$.
For each $n$, there exists a retraction $\varphi_n: R\to R_n$ that maps to $0$ each indeterminate $Z_i$, for $i>n$. Clearly $\varphi_n(MR)\sub M_nR_n$. By Lemma \ref{intclos},  the domains $R_n$ are integrally closed. Since the domains $R_n$ are Noetherian, from Corollary \ref{noeth} it follows that  $R$ is c.i.c. and $R$ satisfies accp.

The ideal $P$ is prime since $P$ is the set of all rational functions in $R$ vanishing when plugging   in first $X=0$, and then $Y=0$ (all rational functions in $R$ are defined for $X=0$, and {\em after} plugging in $X=0$, we obtain a function defined for $Y=0$). For all $n\ge1$, the elements $Z_n$ are invertible in  $R_P$, so $\frac X{Y^n}\in R_P$. Since  $Y$ is not invertible in $R_P$, we see
that  the domain $R_P$ is not Archimedean.
\qed

\begin{example}
	\label{ex2}
	A completely integrally closed  domain $R$ satisfying accp with just two maximal ideals such that, for each maximal ideal $M$, the domain $R_M$ is not Archimedean.
\end{example}

Let $k$ be a field and let
$$A=k[X_1,Y_1, Z_{1,n}, \frac {X_1Z_{1,n}}{Y_1^n}; \ X_2,Y_2, Z_{2,n}, \frac {X_2Z_{2,n}}{Y_2^n}  \ (n\ge1)],$$
where $X_i,Y_i,Z_{i,n} (i=1,2,\ n\ge1)$ are independent indeterminates over $k$.

Let
\begin{align*}
P_1&=\langle X_1,Y_1, \frac {X_1Z_{1,n}}{Y_1^n}, Z_{2,n}, \frac {X_2Z_{2,n}}{Y_2^n} \ (n\ge1)\rangle A, \text{ and } \\
P_2&=\langle X_2,Y_2, \frac {X_2Z_{2,n}}{Y_2^n}, Z_{1,n}, \frac {X_1Z_{1,n}}{Y_1^n}\ (n\ge1)\rangle A.
\end{align*}
The ideal $P_1$ is prime since it is the set of all rational functions in $A$ vanishing when plugging   in first $X_1=Z_{2,n}=0$ for all $n$, and then $Y_1=0$. Similarly, the ideal $P_2$ is prime.

For all $n\ge1$, the elements $Z_{1,n}$ are invertible in  $A_{P_1}$, so $\frac {X_1}{Y_1^n}\in A_{P_1}$. Since  $Y_1$ is not invertible in $A_{P_1}$, we see
that  the domain $A_{P_1}$ is not Archimedean. Similarly, the domain $A_{P_2}$ is not Archimedean.

Let $S=A\sm (P_1\cup P_2)$, and
$R=A_S$, thus $R=A_{P_1}\cap A_{P_2}.$
Hence $R$ has just two maximal ideals, namely $M_1=P_1A_{P_1}\cap R$ and $M_2=P_2A_{P_2}\cap R$.
We have $R_{M_i}=A_{P_i}$ for $i=1,2$, so the domains $R_{M_1}$ and $R_{M_2}$ are not Archimedean.

For each $n\ge1$, let
$$A_n=k[X_1,Y_1, Z_{1,j}, \frac {X_1 Z_{1,j}}{Y_1^j};\ X_2,Y_2, Z_{2,j}, \frac {X_2 Z_{2,j}}{Y_2^j}   \ (1\le j\le n)]$$
and $R_n=(A_n)_{S_n}$,
where $S_n=S\cap A_n$.

By  Lemma \ref{intclos},  the domains
$$D_n=k[X_1,Y_1, Z_{1,j}, \frac {Z_{1,j}X_1}{Y_1^j}   \ (1\le j\le n)]$$ and $A_n=D_n[X_2,Y_2, Z_{2,j}, \frac {X_2Z_{2,j}}{Y_2^j}  \ (1\le j\le n)]$
are integrally closed. Hence $R_n$ is integrally closed.

Clearly $R_1\sub R_2\sub\dots$ and $R=\bigcup_n R_n$.  For each $n\ge1$ we have a retraction $\varphi_n: R=A_S\to R_n$ that maps to $0$ each indeterminate $Z_{i,j}$ for $i=1,2$ and $j>n$ since the elements $Z_{1,j}$ and $Z_{2,j}$ do not belong to $S$. Clearly the elements in $\varphi_n(M_1\cup M_2)$ are nonunits in $R_n$.
Since the domains $R_n$ are Noetherian and integrally closed, it follows from Proposition \ref{noeth} that $R$ is c.i.c. and $R$ satisfies accp.
\qed

\medskip
Of course, the domain $R$ in Example \ref{ex2} is not Mori, since any localization of a Mori domain is Mori and so Archimedean.

\begin{remark}\label{double}
	If $D$ and $A$ are the domains defined in Examples \ref{ex1} and   \ref{ex2}, respectively, then $A\cong D\otimes_k D$.
\end{remark}

The next example \ref{ex3} shows that a stable Archimedean domain may not be  locally Archimedean. We will use below the following well-known facts:

\begin{lemma}\label{val} (see \cite[Lemma 1.1.4 and Proposition 5.3.3]{fhp})
Let $U$ be a valuation domain (possibly a field), let $K=\Frac (U)$, and let $X$ be an indeterminate over $U$. Then \mbox{$V=U+XK[X]_{\langle X \rangle}$} is a valuation domain. If $U$ is strongly discrete, then also $V$ is strongly discrete.
The  prime ideals of $V$ are all the ideals $P+XK[X]_{{\langle X \rangle}}$, where $P$ is a prime ideal of $U$. Moreover, if  $P$ is nonzero, then  $P+XK[X]_{{\langle X \rangle}}=PV$ and $(P+XK[X]_{{\langle X \rangle}})\cap U=P$. For $P=(0)$ the ideal
$XK[X]_{{\langle X \rangle}}$  is the least nonzero prime ideal of $V$.  Thus, if $U$ is finite dimensional, then $\dim V=\dim U+1$.
\end{lemma}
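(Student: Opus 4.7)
The plan is to recognize $V = U + XK[X]_{\langle X \rangle}$ as an instance of the classical D+M (pullback) construction. I will set $W = K[X]_{\langle X \rangle}$, which is a DVR with maximal ideal $N = XK[X]_{\langle X \rangle}$ and residue field $W/N \cong K$, so that $V$ is the preimage of $U$ under the quotient $W \twoheadrightarrow K$. All statements of the lemma will then be read off from this pullback structure.

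First I would verify that $V$ is a valuation domain. Given nonzero $f,g \in V$, the valuation-domain property of $W$ puts one of $f/g, g/f$ in $W$; if the resulting element of $W$ has nonzero image in $K$, the valuation-domain property of $U$ places the image in $U$, hence the element itself in $V = U + N$ (the other case is trivial since $N \subseteq V$). Alternatively one may invoke \cite[Lemma 1.1.4]{fhp} directly. Next I would classify the primes of $V$: primes strictly contained in $N$ correspond bijectively to primes of $W$ strictly contained in $N$, of which only $(0)$ exists since $W$ is a DVR; primes containing $N$ pull back to primes of $V/N \cong U$, giving a bijection $P \leftrightarrow P + N$. The identity $(P + N)\cap U = P$ then drops out of $N \cap U = (0)$.

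The key computational step is to show $PV = P + N$ for a nonzero prime $P$ of $U$. The inclusion $PV \subseteq P + N$ is immediate from $V = U + N$. For the reverse inclusion I would pick a nonzero $p \in P$; then any $f \in N$ has the form $f = Xg$ with $g \in W$, and $f = p \cdot (Xg/p)$ with $Xg/p \in N \subseteq V$, giving $f \in pV \subseteq PV$. Combined with $P \subseteq PV$, this yields $P + N \subseteq PV$, which is the main technical point and the step I expect to be the chief obstacle.

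Finally, strong discreteness and dimension follow readily from the pullback picture. For $P = (0)$, the prime $N = XW$ is not idempotent since $X \notin X^2 W = N^2$. For a nonzero prime $P$ of $U$ with $P \ne P^2$, intersecting $PV = P + N$ and $(PV)^2 = P^2 V \subseteq P^2 + N$ with $U$ gives $P$ and $P^2$ respectively, so $(PV)^2 \subsetneq PV$; this transfers strong discreteness from $U$ to $V$. For dimension, every saturated chain of primes of $V$ has the form $(0) \subsetneq N \subsetneq P_1 + N \subsetneq \cdots \subsetneq P_d + N$ arising from a chain $(0) \subsetneq P_1 \subsetneq \cdots \subsetneq P_d$ in $U$, yielding $\dim V = \dim U + 1$ when $U$ is finite-dimensional.
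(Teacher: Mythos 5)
The paper does not actually prove this lemma: it is stated as a collection of well-known facts with a pointer to \cite[Lemma 1.1.4 and Proposition 5.3.3]{fhp}, so your self-contained pullback argument is, by construction, a different (and more informative) route. Your overall strategy is sound and all the substantive steps check out: the identification of $V$ as the preimage of $U$ under $W=K[X]_{\langle X\rangle}\twoheadrightarrow K$, the splitting of $\operatorname{Spec}V$ into primes not containing the conductor $N$ (only $(0)$, since $W$ is a DVR) and primes containing $N$ (in bijection with $\operatorname{Spec}(V/N)\cong\operatorname{Spec}U$), the computation $PV=P+N$ via $f=p\cdot(Xg/p)$ with $Xg/p\in N$, the transfer of strong discreteness by intersecting $PV$ and $(PV)^2\subseteq P^2+N$ with $U$, and the dimension count. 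The one place where your write-up is literally wrong is the verification that $V$ is a valuation domain: if $h\in W$ has nonzero image $\bar h\in K$, the valuation property of $U$ gives $\bar h\in U$ \emph{or} $\bar h^{-1}\in U$, not necessarily the former; in the second case you must observe that $h$ is a unit of the local ring $W$ (as $h\notin N$), so $h^{-1}\in W$ has image $\bar h^{-1}\in U$ and hence $h^{-1}\in V$, which still yields the valuation property of $V$. This is a one-sentence repair (and you also offer the citation as a fallback), so I would call it an imprecision rather than a gap. A final cosmetic remark: the lemma's list of primes $P+N$ omits the zero ideal of $V$; your classification correctly accounts for it as the unique prime not containing $N$.
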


\begin{corollary}\label{corval}
Let $X$ and $Y$ be two independent indeterminates over a field $k$,  let $C=k[Y,\frac X{Y^n}\ (n\ge1)]$, and let $P$ be the maximal ideal $YC=\langle X,\frac X{Y^n}\ (n\ge1)\rangle$ of $C$. Then
$V=C_P$ is a strongly discrete $2$-dimensional valuation domain.
\end{corollary}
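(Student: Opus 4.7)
The plan is to apply Lemma \ref{val} to the DVR $U = k[Y]_{(Y)}$, which is strongly discrete with fraction field $k(Y)$. That lemma immediately yields that
\[
V' := k[Y]_{(Y)} + X\, k(Y)[X]_{\langle X\rangle}
\]
is a strongly discrete valuation domain of dimension $\dim U + 1 = 2$. The substantive task is then to identify $V = C_P$ with $V'$.

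First I would describe $C$ concretely. Each monomial generator $Y^{\alpha_0}\prod_n(X/Y^n)^{\alpha_n}$ of $C$ equals $X^s Y^t$ with $s = \sum_n \alpha_n \ge 0$ and $t = \alpha_0 - \sum_n n\alpha_n$; the achievable pairs $(s,t)$ are exactly $(0,t)$ with $t\ge 0$, together with $(s,t)$ for $s\ge 1$ and $t\in\mathbb Z$ arbitrary. Hence
\[
C \;=\; k[Y]\; +\; X\cdot k[X,Y,Y^{-1}].
\]
In particular $C \sub V'$. Furthermore, the residue map $C \to C/YC$ sends $Y$ and each $X/Y^n$ to $0$, since $X/Y^n = Y\cdot X/Y^{n+1}\in YC$; so $C/YC\cong k$, confirming that $P=YC$ is maximal, and showing that $g\in C$ lies outside $P$ exactly when its ``constant coefficient in $k$'' is nonzero.

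To prove $V\sub V'$: given $g\in C\sm P$, write $g = a_0(Y) + X\tilde g$ with $a_0(0)\ne 0$, so $a_0\in k[Y]_{(Y)}^\times$. The formal inverse $g^{-1} = a_0^{-1}(1 + X a_0^{-1}\tilde g)^{-1}$ lies in $k[Y]_{(Y)} + X\,k(Y)[X]_{\langle X\rangle} = V'$, so $g\in V'^\times$. For the reverse inclusion, take $h = \alpha + X\beta\in V'$. Writing $\alpha = p/q$ with $p,q\in k[Y]$ and $q(0)\ne 0$ shows $\alpha\in C_P$. For $X\beta$: clearing $k(Y)$-denominators of coefficients, write $\beta = \mc N/\mc D$ with $\mc N,\mc D\in k[X,Y]$ and $\mc D(0,Y)\ne 0$ in $k[Y]$; factor $\mc D(0,Y) = Y^e v(Y)$ with $v(0)\ne 0$, and write $\mc D = Y^e v + X\mc D_1$ for some $\mc D_1\in k[X,Y]$. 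Then
\[
X\beta \;=\; \frac{X\mc N/Y^e}{\,v + X\mc D_1/Y^e\,},
\]
and both $X\mc N/Y^e$ and $X\mc D_1/Y^e$ lie in $X\cdot k[X,Y,Y^{-1}]\sub C$. Hence numerator and denominator of the right-hand side both belong to $C$, and the denominator has nonzero residue $v(0)$ modulo $YC$, so lies outside $P$. Therefore $X\beta\in C_P = V$, giving $V'\sub V$.

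The main technical point is the last computation: the $Y^{-e}$ cancellation is possible because $C$ admits arbitrary negative powers of $Y$ precisely when accompanied by at least one factor of $X$. This is the essential feature of $C$ that makes its localization at $P$ reproduce the ``$D+M$'' valuation ring $V'$ supplied by Lemma \ref{val}.
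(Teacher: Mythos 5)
Your proposal is correct and follows the same route as the paper: the paper's proof simply asserts ``Clearly, $V=k[Y]_{\langle Y\rangle}+Xk(Y)[X]_{\langle X\rangle}$'' and then invokes Lemma \ref{val}, and your argument is exactly a careful verification of that identification (via the description $C=k[Y]+X\,k[X,Y,Y^{-1}]$ and the two inclusions) followed by the same application of the lemma.
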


\begin{proof}
Clearly, $V=k[Y]_{\langle Y\rangle}+Xk(Y)[X]_{\langle X \rangle}$. By Lemma \ref{val}, $V$ is a  strongly valuation domain of dimension $2$.
\end{proof}

\begin{example}\label{ex3}
A stable $2$-dimensional Pr\"ufer domain $R$ satisfying accp with just two maximal ideals of height $2$. Thus for each maximal ideal $M$ of $R$, except the two maximal ideals of height $2$, the domain $R_M$ is a DVR. Also $R$ is  Archimedean, but not locally Archimedean: $R_M$ is not Archimedean if $M$ is a maximal ideal of $R$ of height $2$.
\end{example}

Let $X$ and $Y$ be two independent indeterminates over a field $k$. Set
$$R=k[X,Y,  \frac{X(1-X)^n}{Y^n}, \  \frac{Y^{n+1}}{(1-X)^n}\ (n\ge1)]_S,$$
where $S=k[Y]\sm Yk[Y]$.

\smallskip
Let $T=\frac{1-X}Y$. We have $X=1-YT$, so
$$R=k[Y, YT, (1-YT)T^n, \frac Y{T^n}\ (n\ge1)]_S.$$
(As shown in item (1) below, $R$ satisfies accp. Hence $R$ is Archimedean, although $\frac Y{T^n}\in R$ for all $n\ge1$. This is not a contradiction since $T\notin R$.)

\smallskip
\begin{enumerate}
\item
\textbf{\textit{$R$ satisfies accp.}}
\begin{proof} 
Let $f$ and $g_n\ (n\ge1)$ be nonzero elements of  $R$ such that $\frac f{\prod_{i=1}^ng_i}\in R$ for all $n\ge1$. To prove that $g_i$ is a unit for $i\gg0$, we may assume that \mbox{$g_i\in k[Y, YT, (1-YT)T^n, \frac Y{T^n} \ (n\ge1)]$}, for all $i\ge1$.
t over $k$, we may view the ring \mbox{$k[Y, YT, (1-YT)T^n, \frac Y{T^n}\ (n\ge1)]$} as a subring of the polynomial ring $k(T)[Y]$. Thus for $i\gg0$ we have $\deg_Y (g_i)=0$, that is, $g_i\in k(T)$ .
Since the elements $Y$ and $T$ are algebraically independen

For  $i\gg0$, since $g_i\in k[Y, YT, (1-YT)T^n, \frac Y{T^n} \ (n\ge1)]$,
by plugging in $Y=0$, we obtain that $g_i\in k[T]$;  by plugging in $Y=\frac 1 T$, we obtain that $g_i\in k[\frac 1T]$, so $g_i\in k[T]\cap k[\frac 1 T]=k$.
We conclude that $R$ satisfies accp.	 
\end{proof} 	  

\item
\textbf{\textit{$R$ is a  stable $2$-dimensional Pr\"ufer domain with just two maximal ideals of height $2$.} }

\begin{proof} 
Let $M$ be  a maximal ideal of $R$.

\emph{Assume that $Y\in M$. We will prove that $R_M$ is a stable $2$-dimensional  valuation domain, in particular $\height M=2$.}{}

Since $X(1-X)\in RY\sub M$ it follows that either $X\in M$ or $1-X\in M$.

	\emph{First assume that  $Y,X\in M$.}
		
	Clearly,   $R\subseteq C=k[Y,\frac X{Y^n}\ (n\ge1)]\sub R_M$. Hence $MR_M=C_P$ for a unique prime ideal $P$ of $C$. Since the maximal ideal $\langle Y,\frac X{Y^n}\ (n\ge1)\rangle$ of $C$ is contained in $MR_M$, it follows that this ideal is equal to $P$.  By Corollary \ref{corval}, $R_M=C_P$ is a $2$-dimensional strongly discrete, and so stable,  valuation domain, and $\height M=2$.

\emph{Next assume that $Y,1-X\in M$}.	

	Recall that $T=\frac {1-X}Y$. Since $XT=\frac{X(1-X)}Y\in R$ and $X$ is a unit in $R_M$, we see that $T\in R_M$. Hence 
	$$R\subseteq k[T,\frac Y{T^n}\ (n\ge1)]\sub R_M$$ 
	Since $T$ and $Y$ are algebraically independent over $k$, we conclude as before that there is  unique maximal ideal $M$ of $R$ containing $Y$ and $1-X$, and that $R_M$ is a $2$-dimensional stable valuation domain.	

\emph{Now assume that $Y\notin M$}. 

Let $D=k(Y)[X]$. Thus $R\subseteq D\subseteq R_M$, so $R_M=D_P$ for a unique prime ideal $P$ of $D$. Since $D$ is a  principal ideal domain, it follows that $R_M$ is a DVR. 
Moreover, every nonunit in  $R$ belongs to just finitely many domains of the form $D_P$ for a prime ideal $P$ of $R$ since $R$	is a PID. 

Since $R$ has just  $2$ maximal ideals of height $>1$, it follows 
that $R$ has finite character.
In addition,  each localization of $R$ at a maximal ideal is a stable valuation domain, implying that $R$ is a stable Pr\"ufer domain \cite[Theorem 3.3]{O2}. 
\end{proof}
\end{enumerate}

Thus  $R$ is $2$-dimensional with exactly $2$ maximal ideals of height $2$.  The localizations at these two maximal ideals are not Archimedean, as seen directly from the above proof. Actually, as it is well-known, a valuation domain is Archimedean if and only if it is one-dimensional. The maximal ideals of $R$ are invertible since $R$ is stable and Pr\"ufer. Thus in  Corollary \ref{archPID} (2) we may not assume just that $R$ is Archimedean rather than locally Archimedean.

Example \ref{ex3} shows that the converse of Corollary \ref{stableaccp} is false: a stable domain $R$ which satisfies accp need not be locally Archimedean, even if $R$ is  Pr\"ufer and $2$-dimensional.

\begin{example}\label{ex4}
	A local integral domain $(R,M)$ with the following properties:
	\begin{enumerate} 
		\item 
		$R$ is one-dimensional, Noetherian, not (finitely) stable, but with stable maximal ideal.
		
		\item
		$R'=(M:M)$ is a finitely generated $R$-module.
		
		\item
		$R'$ is a principal ideal local domain, so $R'$ is stable and Pr\"ufer.
	\end{enumerate}
\end{example}

Let $K=\mathbb Q(\sqrt[3] 2)$ and $R=\mathbb Q+XK[[X]]$. Thus $R'=K[[X]]$ is a principal ideal local domain with maximal ideal $M=XK[[X]]$; so $R$ is one-dimensional, and $R'$ is a $3$-generated $R$-module. By the Eakin-Nagata Theorem, $R$ is Noetherian.
Clearly, $R'$ is not a quadratic extension of $R$, so $R$ is not finitely stable. Explicitly, the fractional ideal $I=\langle 1,\sqrt[3]2\rangle$ of $R$ is not stable (equivalently, the ideal $\langle X, X\sqrt[3]2\rangle$ of $R$ is not stable). Indeed, $I^2=\langle 1,\sqrt[3]2,\sqrt[3]4\rangle$ and   $(I:I^2)=XR$, so $I(I:I^2)= XR\ne R$. It follows that $I$ is not stable. The maximal ideal $M$ of $R$ is stable, since $M$ is an ideal of the stable domain $R'$ which is an overring of $R$.\qed

\medskip

In the next example we present a well-known construction which is related to the  construction in  the proof of the Kaplansky-Jaffard-Ohm Theorem \cite[Ch.III, Theorem 5.3]{FS}. This example illustrates explicitly a particular case of Olberding's Theorem \cite[Proposition 5.4]{O3}, and will be also used for Example \ref{ex6}. 

\begin{example}\label{ex5}
For each $1\le n\le \infty$ and for a field $k$, a strongly discrete, so stable, $n$-dimensional valuation domain $V$ containing  $k$. In particular, if $n=\infty$,  the nonzero prime ideals of $V$ form a  descending infinite sequence, so the height of every nonzero prime ideal of $V$ is infinite. Moreover, for all $n$, $\Frac V$ is a purely transcendental extension of $k$ of transcendence degree $\aleph_0$.
\end{example}

First let $n=\infty$.
Let $V=A_Q$, where
$$A=k[X_n,\frac {X_{n+1}}{X_n^i}\ (n\ge1, i\ge1)],$$
$k$ is a field, $X_n\ (n\ge1)$ are independent indeterminates over $k$, and $Q=X_1A=\langle X_n,\frac {X_{n+1}}{X_n^i} \ (n\ge1)\rangle A$ is a maximal ideal of $A$.

It is easy to show that $V=\bigcup_{n=1}^\infty V_n$ (an ascending union), where $V_n$ are  subrings of $V$ defined inductively as follows:
$V_0=k$, and for $n\ge1$, we let $V_n=V_{n-1}+X_n\left(k(X_1,\dots, X_{n-1})[X_{n}]\right)_{\langle X_n\rangle}$.

By induction, $\Frac(V_n)=k(X_1,\dots,X_n)$ for $n\ge1$. Thus $V_n=V_{n-1}+X_n\left(\Frac(V_{n-1})[X_n]_{\langle X_n\rangle}\right)$ for $n\ge1$. Hence by Corollary \ref{corval}, we obtain inductively that $V_n$ is a strongly discrete valuation domain of dimension $n$, with maximal ideal $M_n=X_1V_n$, and that   the nonzero prime ideals of $V_n$ form a descending chain
$$M_n=P_{n,n}\supsetneq P_{n,n-1} \supsetneq \dots \supsetneq P_{n,1}.$$
It follows that  the domain $V=\bigcup_{n=1}^\infty V_n$ is a strongly discrete, so stable,  valuation domain with maximal ideal $M=X_1V$.
Let  $P$ be a nonzero   prime ideal of $V$. Since
$P=\bigcup_{n=1}^\infty (P\cap V_n)$,
we have $P\cap V_n\ne (0)$ for some integer $n\ge1$. By Lemma \ref{val}, $P=(P\cap V_n)V=P_{n,i}V$ for an integer $1\le i\le n$.
If $n$ is minimal, then $P_{n,i}$ is the least nonzero prime ideal of $V_n$, so $i=1$.
Hence
the nonzero prime ideals of $V$
form an infinite descending chain $M=P_1\supsetneq P_2 \supsetneq \dots $,
where $P_n=P_{n,1}V$ for all  $n\ge1$.

Thus for all $n\ge1$, $P_n$ is the ideal of $V$ generated by the one-dimensional subspace $X_n k(X_1,X_2,\dots,X_{n-1})$ of $V$ over the field $k(X_1,X_2,\dots,X_{n-1})$.

Explicitly, for all $n\ge1$  we have
$$P_n=\sum_{i=n}^\infty X_i\left(k(X_1,\dots,X_{i-1})[X_i]_{\langle X_i\rangle}\right).$$

If $n$ is finite, similarly to the definition of $V$ above,  we define 
$V_n=A_{Q_n}$, where
$$A=k[X_j,\frac {X_{j+1}}{X_j^i}\ (1\le1<n, i\ge1)],$$
 and $Q=X_1A$ is a maximal ideal of $A$.
(if $n=1$, then $A=k[X_1]$).
\qed

\medskip

In the last example we exhibit an $n$-dimensional Archimedean stable local domain, for each $n\geq 2$; thus answering in the negative the question posed in \cite[Problem 7.1]{G}. 
(For details concerning this example, see Propositions \ref{V1V2} and \ref{intV1V2} above.) 

We need the following lemma:

\begin{lemma}\label{DVRLk} 
	Let $k$ be a field, and let $L\ne k$ be a purely transcendental field extension of $k$ with $\trd L/k\le \aleph_0$. Then there exists a DVR $(V,N)$ such that $\Frac V=L$ and $V/N=k$.	 
\end{lemma}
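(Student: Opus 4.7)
The plan is to realize $L$ as a subfield of the Laurent series field $k((t))$ and take $V$ to be the preimage of $k[[t]]$ under the embedding. Write $L=k(X_i : i\in I)$ with $\{X_i\}_{i\in I}$ algebraically independent over $k$, with $1\le|I|\le\aleph_0$ (nonempty since $L\ne k$), and fix an index $i_0\in I$. I would construct a $k$-embedding $\phi:L\hookrightarrow k((t))$ by sending $X_{i_0}\mapsto t$ and each remaining $X_i$ ($i\ne i_0$) to an element $\alpha_i\in k[[t]]$ chosen so that the family $\{\alpha_i:i\ne i_0\}$ is algebraically independent over $k(t)$. Then $\{t\}\cup\{\alpha_i:i\ne i_0\}$ is algebraically independent over $k$, so $\phi$ extends uniquely from the polynomial ring $k[X_i:i\in I]$ to a well-defined field embedding of $L$ into $k((t))$.

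The existence of such $\alpha_i$ reduces to showing that $\trd(k((t))/k(t))\ge\aleph_0$ for every field $k$. When $|k|<2^{\aleph_0}$ this follows from a cardinality count: the algebraic closure of $k(t)$ inside $k((t))$ has cardinality $|k(t)|<2^{\aleph_0}\le|k((t))|$, so one can choose the $\alpha_i$ inductively outside the algebraic closure of $k(t,\alpha_j:j<i)$. For arbitrary $k$ the same bound follows from classical lacunary-series constructions (power series $\sum_j t^{f(j)}$ with sufficiently fast-growing exponents are algebraically independent over $k(t)$).

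Given $\phi$, set $V=\phi^{-1}(k[[t]])$ and $N=\phi^{-1}(tk[[t]])$. Since $k[[t]]$ is the valuation ring of the $t$-adic valuation $v_t$ on $k((t))$, the pullback $V$ is a valuation subring of $L$ with value group contained in $\mathbb Z$. Because $v_t(\phi(X_{i_0}))=v_t(t)=1$, this value group equals $\mathbb Z$, so $V$ is a DVR with maximal ideal $N$ and uniformizer $X_{i_0}$. The composition $k\hookrightarrow V\twoheadrightarrow V/N\hookrightarrow k[[t]]/tk[[t]]=k$ is the identity on $k$, forcing $V/N=k$. Finally $\Frac V=L$ since $\phi$ is a field embedding (equivalently, for $x\in L^*$ either $x$ or $x^{-1}$ lies in $V$). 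The main obstacle is the transcendence bound $\trd(k((t))/k(t))\ge\aleph_0$ uniformly in $k$; once granted, the remaining construction and verifications are routine.
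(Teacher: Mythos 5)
Your proposal is correct and is essentially the paper's own argument: the paper likewise embeds $L$ over $k$ into $k((X))$ with one transcendence-basis element going to $X$ (citing MacLane--Schilling for $\trd k((X))/k\ge\aleph_0$, the fact you sketch via cardinality/lacunary series) and takes $V=k[[X]]\cap L$, which is exactly your pullback of $k[[t]]$.
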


\begin{proof}
	Let $L=k(B)$, where $B$ is a set of algebraically independent elements over $k$. Since $\trd L/k\le \aleph_0\le \trd k((X))/k$ \cite[Lemma 1, Section 3]{ML}, there exists a subset $B_0$ of $k((X))$ containing $X$ such that $|B_0|=|B|$. Thus there exists an isomorphism over $k$ of the  fields $L$ and $k(B_0)$ mapping $B$ onto $B_0$. Hence we may assume that $L=k(B)\subseteq k((X))$ and that $X\in B$. 
	Define
	$
	V=k[[X]]\cap L.
	$
	Thus $V$ is a DVR with maximal ideal $XV$, and $V/XV\cong k$. Since $k[B] \subseteq V \subseteq  L=k(B)$, it follows that $\Frac(V)=L$. 
\end{proof}

\begin{example}\label{ex6}
For $1\le n\le\infty$, a stable $n$-dimensional Archimedean local domain $(R,M)$. \end{example}

By Example \ref{ex5}, for any field $k$, there exists a stable $n$-dimensional valuation domain $(V_2, Q_2)$ containing $k$ such that $\Frac V_2=L$ is a purely transcendental extension of $k$ and $V_2/Q_2=k$. By Lemma \ref{DVRLk}, there exists a DVR $(V_1,Q_1)$ containing $k$ such that $\Frac(V_1)=L$, and $V_1/Q_1=k$.
By Proposition \ref{V1V2}, there exists a local Archimedean finitely stable domain $R$ such that $R'=V_1\cap V_2$ and by Proposition \ref{intV1V2} such a domain is stable.\qed

\medskip

By Example \ref{ex6} and by Proposition \ref{intV1V2} (1), the integral closure of an Archimedean domain, or even an accp stable domain, is not necessarily Archimedean.  The domain $\mathbb Z+X\ol{\mathbb Z}[X]$, where $\ol{\mathbb Z}$ is the ring of all algebraic integers, satisfies accp
while $R'=\ol{\mathbb Z}[X]$ does not, although $R'$ is Archimedean \cite[Example 5.1]{AAZ1}.

\setcounter{npart}0
\renewcommand{\comm}{}
\newpage
\renewcommand\refname{References \lowercase{(for both parts)}} 


\begin{thebibliography}{11}

\bibitem{AHP}  D. D. Anderson, J. A. Huckaba and I. J. Papick, \emph{A note on stable domains},  Houston J. Math, {\bf 13} (1987), 13--17.

\bibitem{AAZ}  D. D. Anderson, D. F. Anderson, and M. Zafrullah, \emph{Factorization in integral domains}, J. Pure Appl. Algebra \textbf{69} (1990), 1--19.

\bibitem{AAZ1}  D. D. Anderson, D. F. Anderson, and M. Zafrullah, \emph{Rings between $D[X]$ and $K[X]$}, Houston J. Math. \textbf{17} (1991), 109--129.

\bibitem{Bar} V. Barucci, \emph{Mori domains}, Non-Noetherian Commutative Ring Theory; Recent Advances, Chapter 3, Kluwer Academic Publishers, 2000.

\bibitem{b} H. Bass, \emph{On the ubiquity of Gorenstein rings}, Math Z. \textbf{82} (1963), 8--28.

\bibitem{BS2} S. Bazzoni and L. Salce, \emph{Warfield domains}, J. Algebra {\bf 185} (1996), 836--868.

\bibitem{BB} R. A. Beauregard and  D. E. Dobbs, \emph{On a class of Archimedean Integral domains}, Can. J. Math., Vol. XXVIII, No. 2 (1976), 365--375.

\bibitem{Bou} A. Bouvier, \emph{The local class group of a Krull domain}, Canad. Math. Bull. {\bf 26} (1983), 13--19.

\bibitem{fhp} M. Fontana, J.A. Huckaba and I.J. Papick, \emph{Pr\"ufer domains}, Monographs and Textbooks in Pure and
Applied Mathematics  \textbf{203}, M. Dekker, New York, 1997.

\bibitem{FS} L. Fuchs and L. Salce, \emph{Modules over Non-Noetherian Domains}, Mathematical Surveys and Monographs {\bf 84}, American Mathematical Society, 2001.

\bibitem{G} S. Gabelli, \emph{Ten problems on stability of domains}, Commutative Algebra - Recent Advances in Commutative Rings, Integer-valued Polynomials, and Polynomial functions, 175--193, Springer, New York, 2014.

\bibitem{GP} S. Gabelli and G. Picozza,  \emph{Star stability and star regularity for Mori domains}, Rend. Semin. Mat. Univ. Padova, {\bf 126} (2011), 107--125.

\bibitem{GR1}  S. Gabelli and M. Roitman,  \emph{On finitely stable domains, I}, J. Commut. Algebra,
{\bf 11}  (2019), 49--67.

\bibitem{GR} S. Gabelli and M. Roitman, {\it On finitely stable domains, II} (to appear in J. Commut. Algebra).

\bibitem{gpp} R. Gilmer, \emph{Domains in Which Valuation Ideals are Prime Powers}, Arch. Math., {\bf 17} (1966), 210--215.

\bibitem{gilmer} R. Gilmer, \emph{Multiplicative Ideal Theory},  Dekker, New  York, 1972.

\bibitem{GH} R.Gilmer and W. Heinzer, \emph{Intersections of quotients rings of an integral domain}, J. Math. Kyoto Univ. {\bf 7} (1967), 133--149.

\bibitem{Gr} A. Grams, \emph{Atomic rings and the ascending chain condition for principal ideals}, Proc. Camb. Phil. Soc., {\bf 75} (1974),  321--329.

\bibitem{J} A. Jaballah, \emph{Maximal non-Pr\"ufer and  maximal non-integrally closed subrings of a field}, J. Algebra Appl.,  \textbf{11} (2012), 877--895.
 
 \bibitem{L} J. Lipman, \emph{Stable ideals and Arf rings} J. Pure Applied Algebra \textbf{4} (1974), 319--336.
 
\bibitem{ML}  S. MacLane and O. F. G. Schilling, \emph{Zero-Dimensional Branches of Rank One on Algebraic Varieties}, Ann. of Math., {\bf 40}1 (1939), 507--520.

\bibitem{Mats} H. Matsumura, \emph{Commutative ring theory}, Cambridge Studies in Advanced Mathematics, vol. 8, Cambridge University Press, Cambridge, 1989.

\bibitem{MZ}  J. L.  Mott  and M. Zafrullah, \emph{On Krull domains}, Arch. Math.   \textbf{56} (1991), 559--568.

\bibitem{Ohm} J. Ohm, \emph{Some Counterexamples Related to Integral Closure in $D\lbrack\lbrack x\rbrack\rbrack$}, Trans. Amer. Math. Soc., {\bf 122} Issue 2 (1966), 321--333.

\bibitem{OG} B. Olberding,  \emph{Finitely stable rings}, Commutative Algebra - Recent Advances in Commutative Rings, Integer-valued Polynomials, and Polynomial functions, 269--291, Springer, New York, 2014.

\bibitem{O3} B. Olberding,  \emph{Globalizing local properties of Pr\"ufer domains}, J. Algebra \textbf{205} (1998), 480--504.

\bibitem{O1} B. Olberding, \emph{On the classification of stable domains}, J. Algebra \textbf{243} (2001), 177--197.

\bibitem{O2} B. Olberding, \emph{On the structure of stable domains}, Comm. Algebra \textbf{30} (2002), 877--895.

\bibitem{O7} B. Olberding,  \emph{One-dimensional bad Noetherian domains}, Trans. Amer. Math. Soc. {\bf 366} (2014), 4067--4095.

\bibitem{O8} B. Olberding,  \emph{One-dimensional stable rings}, J. Algebra {\bf 456} (2016), 93--122.

\bibitem{O5} B. Olberding, \emph{Noetherian rings without finite normalizations}, Progress in commutative algebra 2, 171--203, W. de Gruyter, Berlin, 2012.

\bibitem{O6} B. Olberding, \emph{Stability, duality and $2$-generated ideals, and a canonical decomposition of modules}, Rend. Semin. Mat. Univ. Padova {\bf 106} (2001), 261--290.

\bibitem{Re} R. Remmert, \emph{Classical topics  in  complex function theory}, Springer-Verlag,  New  York, 1998.

\bibitem{R1} D.E. Rush, \emph{Rings with two-generated ideals}, J. Pure Appl. Algebra {\bf 73} (1991),  257--275.

\bibitem{R2} D.E. Rush, \emph{Two-generated ideals and representations of abelian groups over valuation rings}, J. Algebra  {\bf 177}  (1995), 77--101.

\bibitem{sv2} J. D. Sally and W. V. Vasconcelos, \emph{Stable rings}, J. Pure Appl. Algebra \textbf{4} (1974), 319--336.

\bibitem{sv1} J. D. Sally and W. V. Vasconcelos, \emph{Stable rings and a problem of Bass}, Bull. Amer. Math. Soc. {\bf 79} (1973), 574--576.

\end{thebibliography}
\end{document}